\def\th@newremark{\th@remark\thm@headfont{\bfseries}}
\theoremstyle{newremark}
\newcommand{\Co}{\mathcal{C}}
\def\N{\mathbb{N}}
\def\P{\mathbf{P}}
\def\R{\mathbb{R}}
\def\E{\mathbf{E}}
\def\ind{\mathbf{1}}
\def\m{\mathfrak{m}}
\definecolor{coultitre2}{rgb}{0.41,0.05,0.05}  
\definecolor{coultitre}{RGB}{88,189,63}
\definecolor{fondtitre}{rgb}{0.20,0.43,0.09}
\definecolor{lightblue}{cmyk}{0.83,0.24,0,0.12}
\definecolor{blueCollege}{RGB}{85, 122, 154}
\definecolor{redIDF}{RGB}{243, 59, 40}
\definecolor{purpleRDM}{RGB}{142, 98, 168}
\definecolor{lightpurple}{RGB}{204, 153, 255} 
\definecolor{blueInkscape}{RGB}{0,0,255}
\definecolor{newred}{RGB}{201,20,35}
\definecolor{greenlantern}{RGB}{109,163,49}
\definecolor{verdesi}{RGB}{0,153,0}
\definecolor{amarilloEi}{RGB}{255, 204, 102}
\definecolor{gray2}{RGB}{192,192,192}
\def\I{\mathbb{I}}
\def\M{\mathbb{M}}
\def\LL{\mathcal{L}}
\newtheorem{theorem}{Theorem}[section]
\newtheorem{corollary}[theorem]{Corollary}
\newtheorem{definition}[theorem]{Definition}
\newtheorem{lemma}[theorem]{Lemma}
\newtheorem{proposition}[theorem]{Proposition}
\newtheorem{remark}[theorem]{Remark}
\title{Evolving genealogies for branching populations \\ under selection and competition}
\author{Airam Blancas\thanks{Department of Statistics, Instituto Tecnol\'ogico Aut\'onomo de M\'exico, M\'exico; E-mail: \texttt{airam.blancas@itam.mx}} , Stephan Gufler\thanks{Goethe Universit\"at, Institut f\"ur Mathematik, 60629 Frankfurt am Main, Germany; E-mail: \texttt{gufler@math.uni-frankfurt.de}, \texttt{wakolbinger@math.uni-frankfurt.de},} , Sandra Kliem\thanks{Universit\"at Leipzig, Mathematisches Institut, Augustusplatz 10, 04109 Leipzig; E-mail: \texttt{kliem@math.uni-leipzig.de}}, \\ Viet Chi Tran\thanks{LAMA, Univ Gustave Eiffel, Univ Paris Est Creteil, CNRS, F-77454 Marne-la-Vall\'ee, France; E-mail: \texttt{chi.tran@univ-eiffel.fr}} \, and Anton Wakolbinger \!\!$^\dagger$
}
\begin{document}
\maketitle
\vspace{-.2in}

\begin{abstract}
For a continuous state branching process with two types of individuals which are subject to selection and density dependent competition, we characterize the joint evolution of population size, type configurations and genealogies as the unique strong solution of a system of SDE's.
 Our \mbox{construction} is achieved in the lookdown framework and provides a synthesis as well as a generalization of cases considered separately in two seminal papers by Donnelly and Kurtz (1999), namely fluctuating population sizes under neutrality, and selection with constant population size. As a conceptual core in our approach we introduce the  {\em selective lookdown space} which is obtained  from its neutral counter\-part through a state-dependent thinning of  ``potential'' selection/competition events whose rates interact with the evolution of the type densities. The updates of the genealogical distance matrix at the \mbox{``active''} selection/competition events are obtained through an appropriate sampling from the \mbox{selective} lookdown space. The solution of the above mentioned system of SDE's is then mapped into the joint evolution  of  population size and symmetrized type configurations and genealogies, i.e. marked distance matrix distributions.   By means of Kurtz' Markov mapping theorem, we characterize the latter process as the unique solution of a  martingale problem. For the sake of transparency we restrict the main part of our presentation to a prototypical example with two types, which contains  the essential features. In the final section we outline an extension  to processes with multiple types including mutation.
%
\end{abstract}

\smallskip
\noindent  \textbf{Keywords.} Birth-death particle system, lookdown process, tree-valued processes, selection, density-dependent competition, selective lookdown space, fluctuating population size, genealogy.

\noindent \textbf{MSC2010.} 60J80, 60K35, 92D10.\\

{\small
\tableofcontents
}

\section{Introduction}  \label{intro-section}
The aim of our paper is to give a pathwise construction for the joint evolution of population size, type frequencies and genealogies in  a continuous state branching process with interactions due to type dependent selective advantage in reproduction and type density dependent competition. Such processes model large populations whose individuals are distinguished by their types. The sizes of the populations and their type structures are fluctuating due to individual births and deaths, where certain types may have a selective advantage in the fecundity, and others may have a disadvantage against some other types, say in the competition for resources. We are interested here in these dynamics but also in that of the genealogies of the individuals composing these populations, which consist of the collection of their ancestral paths, i.e. the succession of their ancestors with their types.  We demonstrate the strength of the approach in a prototypical example with two types, one of them
having a selective advantage, the other one having a competitive disadvantage.
This restriction is mainly for presentational reasons; in Section \ref{sec:multitype} we will outline an extension
to more general processes with multiple types, including mutations.

We take the so-called lookdown approach that has been developed by Donnelly and Kurtz in order to construct and study the evolution of continuum populations with a general type space in terms of a countably infinite particle system. In two seminal papers, these authors treated two distinct cases: that of populations with constant sizes under selection (and recombination) \cite{MR1728556}
and that  of neutral populations with fluctuating population sizes \cite{MR1681126}. In the present work we consider selection and competition combined with fluctuating sizes. One of our key results, Theorem~\ref{Gprop},  extends ideas in the proof of \cite[Theorem~4.1]{MR1728556} to a  situation where the total mass is a stochastic process whose dynamics depends on the type frequencies,  and thus opens the way for a synthesis of the settings of \cite{MR1728556} and~\cite{MR1681126}.
In both of these papers  the evolution of the (relative) type frequencies and the genealogies are encoded in an infinite particle system that describes the reproductive events. In~\cite{MR1728556} the population size (or total mass of the continuum population) is assumed to be constant, while in \cite{MR1681126} it is accounted for in a separate process, which is autonomous due to the neutral setting considered in that paper.
This is no longer the case in our setting where additional births and deaths occur in the infinite particle system due to selection and competition which depend on and also impact the evolution of the population size.

While many considerations pertaining to genealogies and ancestral lineages are already present in and between the lines of \cite{MR1728556} and \cite{MR1681126}, the power of the lookdown approach for studying evolving genealogies has unfolded more recently, several years after Evans \cite{Evans2000} characterized Kingman's coalescent as a random metric space. The lookdown representation of the evolving populations in terms of exchangeable particle systems comes with a graphical representation that   provides a genealogy in a natural way.  A~central tool for proving Theorem~\ref{Gprop} are the {\em sampling measures} on the {\em neutral lookdown space}, which is the completion of $\mathbb R_+ \times \mathbb N$ with respect to the (random) semi-metric given by the (neutral) genealogical distances.
The concept of the (neutral) lookdown space has recently been introduced in \cite{Gu1} to obtain (in the neutral case and for constant population size) a pathwise construction of  tree-valued Fleming-Viot processes. This is reviewed in Section \ref{neut} and Theorem \ref{Gprop} is proved in Section \ref{sec_itscheme}. In  Section \ref{genealogy} (see also the preview in Sec.~\ref {sec:pathwiseconstructionofgenealogies}) we will construct what we call the {\em selective lookdown space with fluctuating population size}. This space will carry the sampling measures which will serve to update the type configuration as well as the genalogical distances at selective events, see Sec.~\ref{updateR}.
The selective lookdown space provides a `global' description of the genealogies as a random metric space. Our paper provides also a `dynamical' construction of the latter, with the Theorem~\ref{zetaRG} that establishes a system of SDE's (with unique strong solution) for the joint process of total mass, genealogical distance matrix, and type configuration. Exploiting the exchangeability that comes with the concept of sampling, we then turn to the {\em symmetrization} or ``unlabelling'' of the lookdown genealogies. As states that describe the type distributions and genealogies, we use here the isomorphy classes of marked ultrametric measure spaces
introduced in~\cite{grevenpfaffelhuberwinter2009,DGP11,MR3024977} which can be thought of as marked distance matrix distributions. Theorem~\ref{martprobth} then characterizes the joint evolution of genealogies and population size in terms of a well-posed martingale problem. This result is proved by a two-fold application of Kurtz' Markov mapping theorem, see Sections~\ref{twomp} and \ref{symmgen}.

 While we provide a  solution of this martingale problem from specified sources of randomness (the Brownian motion $\mathcal W$ and Poisson point processes $\mathcal L$ and $\mathcal K$ defined in Section \ref{secmain}), a more common approach for showing the existence of a solution would be to deal with tightness of finite approximations. In our setting, apart from being less constructive, this approach may cause serious technical problems. One of the few papers in which a martingale problem for  continuum tree-valued processes including pairwise (competitive) individual interactions (and fluctuating total mass) has been treated via tightness is~\cite{KliemWinter2018}. Note however that there the  distances between individuals are measured in terms of numbers of mutations, whereas we measure distances in terms of times back to the most recent common ancestor of the two individuals.
In \cite{MR3024977} tree-valued Fleming-Viot processes with mutation and type-frequency dependent selection are constructed, but there constant population size is assumed.
Our contribution here is to also provide a lookdown representation of the genealogies that completes the picture: we construct the genealogies as a metric space, endowed with sampling measures, which, in an appropriate sense, locally look like the neutral genealogies in absence of selection and competition, with modifications related to the selective and competitive events.


We use the Poisson process $\mathcal L$ of {\em lookdown events} (see Section \ref{secmain}) to encode the elements of  neutral genealogies, but note that there exist also alternative routes for doing this. One of them is along the {\em continuum random tree} and Brownian excursions (\cite{Aldous1991,BeBe2009,LeGall2005} or \cite[Ch.~4]{GuDis}), with certain deformations of these objects to model competition (\cite{BeFiFo2018,LePaWa2015,PaWa2011}) although the introduction of types is not straightforward in these models and in the cited references
the competition depends on the individuals' left-right order encoded in the excursion.
Another one is Kurtz and Rodrigues' {\em lookdown representation with a continuum of levels} \cite{KuRo2011}, which has recently been extended by Etheridge and Kurtz \cite{EtheridgeKurtz} to a variety of models including selection and competition, but with less emphasis on evolving genealogies.

Recent work on evolving genealogies in the neutral case, with a focus on heavy-tailed offspring distributions, has been reviewed in \cite{KeWa20}.
Evolving ancestral path configurations under competition are studied in \cite{meleardtran_suphist,kliem} or \cite{calvezhenrymeleardtran,henrymeleardtran}, building on the framework of {\em historical processes} which was pioneered in \cite{dawsonperkinsAMS,dynkin91}.
Inference methods in the presence of selection, varying population size and evolving population structure are described in \cite{lepersportebilliardmeleardtran}, extending results of \cite{billiardferrieremeleardtran}; in the latter models, a time-scale separation allows to treat separately the type structure and population size on the one hand, and the genealogies on the other hand. In the present paper, however, we deal simultaneously with interactions, \nopagebreak demography and genealogies.

\section{Model and main results}\label{secmain}

\subsection{Population size and type frequencies}

In order to make the conceptual novelties and essentials as transparent as possible, we will restrict ourselves in the main part of this work to a population with only two types, $A$ and $B$. An extension of the results  to more general type spaces and including mutations is outlined in Section \ref{sec:multitype}.
We will denote by $\mathbb I = \{A, B\}$  the type space, and by $\xi^A$ and $\xi^B$ the processes in continuous time corresponding to the sizes of the type $A$- and type $B$-populations. Intuitively, these populations consist of a continuum of individuals with infinitesimal masses; the concept of  {\em sampling measures}, which we will recover also in the {\em selective lookdown space}, makes this intuition rigorous.

The \textit{population size} or total \textit{population mass} at a time $t>0$ is $\xi_t=\xi_t^A+\xi^B_t$. We define   the \textit{type frequencies} or proportions of types $A$ and $B$ as
\[\mu_t^A= \frac{\xi^A_t}{\xi_t}\ind_{ \{\xi_t>0\} }\, , \qquad \mu_t^B= \frac{\xi^B_t}{\xi_t}\ind_{ \{\xi_t>0\} }.\]
The system we are going to consider as a prototypical case is a two-type Feller branching diffusion with interactions
\begin{align}
\begin{split}
\label{twotype}
d\xi_t^A&= b \xi_t^A dt -c\xi_t^A \xi^B_t\, dt+ \sqrt{\xi_t^A} dW_t^{A} \\
d\xi_t^B&= -c \xi_t^B  \xi_t^A \,dt + \sqrt{\xi_t^B} dW_t^{B}, \qquad \qquad b\ge 0,\, c \ge 0, 
\end{split}
\end{align}
where $W^A$ and $W^B$ are independent standard Brownian motions. The processes $W^A$ and $W^B$ drive the fluctuations due to natural births and deaths in the diffusion limit of branching populations.  The nonnegative constant  $b$ is the coefficient of the intensity of additional births of type $A$-individuals due to their enhanced fecundity, whereas $c$ is the intensity of additional deaths  of individuals due to their competition against all individuals of the opposite type. Such a system of equations can be seen as arising from the limit of finite particle systems. In \cite[Ch.~9 Sec.~2 p.~392]{ethierkurtz} this is proved in the case $c=0$; then both $\xi^A$ and $\xi^B$ are independent Feller diffusions.

The following proposition, whose proof we will include  at the end of Section~\ref{sec_sel-gen},  guarantees existence and uniqueness of a strong solution of \eqref{twotype} and states the long time behavior of  $(\xi^A,\xi^B)$, namely that the process $\xi^B$ gets extinct in finite time almost surely, while $\xi^A$ becomes either trapped in $0$ or diverges to~$+\infty$.
\begin{proposition}\label{strongsol_type_process} Let $\xi^A_0$ and $\xi^B_0 $ be strictly positive and let $W^A$ and $W^B$ be independent standard Brownian motions. Then there exists a unique strong solution $(\xi^A,\xi^B)$ to \eqref{twotype} for all times $t\in\R_+$.  
Moreover, for the extinction times of $\xi^h$ for $h\in\{A,B\}$, which are defined by $\tau^h_0:=\inf\{t\ge 0:\: \xi^h_t=0\}$, we have:
\begin{enumerate}
\item[(i)]$\P\left(\tau^B_0<\infty\right)=1$.
\item[(ii)] If $b>0$,
\begin{equation*}
	\P\left(\tau^A_0<\infty\right)\in(0,1),\quad
\P\left(\tau^A_0<\infty\middle|\xi^{A}_{\tau^B_0}>0\right)\in(0,1),\quad
\P\left(\lim_{t\to\infty}\xi^A_t=\infty\middle|\tau^A_0=\infty\right)=1
\end{equation*}
\item[(iii)] If $b=0$, then $\P\left(\tau^A_0<\infty\right)=1$.
\end{enumerate}
\end{proposition}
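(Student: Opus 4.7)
My plan is to treat the claims in order, with one-dimensional comparison against Feller diffusions as the main workhorse.

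For existence and uniqueness I first localize by $\tau_N := \inf\{t \ge 0 : \xi^A_t \vee \xi^B_t > N\}$. On the random interval $[0,\tau_N]$ the drift $(b\xi^A - c\xi^A\xi^B, -c\xi^A\xi^B)$ is globally Lipschitz and each diffusion coefficient $\sqrt{\xi^h}$ is $\tfrac12$-Hölder. Pathwise uniqueness up to $\tau_N$ then follows from a componentwise Yamada--Watanabe argument: for two strong solutions driven by the same $(W^A,W^B)$ I apply the standard smoothed absolute values $\phi_n$ to $\xi^A - \widetilde\xi^A$ and $\xi^B - \widetilde\xi^B$, exploit $(\sqrt x - \sqrt y)^2 \le |x-y|$ for the martingale parts together with the local Lipschitz bound on the drift, and close via Gronwall. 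Weak existence up to $\tau_N$ is obtained from the case $c=0$ (two independent Feller diffusions, classical) by a Girsanov change of measure absorbing the bounded drift perturbation $-c\xi^A\xi^B$, and Yamada--Watanabe promotes this to a strong solution. Finally $\tau_N \uparrow \infty$ almost surely by the one-dimensional comparison theorem: driving with the same Brownian motions, $\xi^A \le \widetilde\xi^A$ where $\widetilde\xi^A$ solves $d\widetilde\xi^A_t = b\widetilde\xi^A_t\,dt + \sqrt{\widetilde\xi^A_t}\,dW^A_t$, and analogously $\xi^B \le \widetilde\xi^B$ for the critical Feller diffusion with zero drift; both dominating processes are nonexploding.

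The same comparison immediately yields (i): the critical Feller diffusion $\widetilde\xi^B$ is absorbed at $0$ in finite time almost surely, and $0 \le \xi^B \le \widetilde\xi^B$ forces $\tau^B_0 < \infty$ a.s. For (ii)--(iii) the key observation is the strong Markov property at the a.s.\ finite stopping time $\tau^B_0$. Since $\xi^B \equiv 0$ is the only nonnegative strong solution of the second SDE starting from $0$, the process $(\xi^A_{\tau^B_0 + t})_{t\ge 0}$ is the one-dimensional Feller diffusion $d\xi^A_t = b\xi^A_t\,dt + \sqrt{\xi^A_t}\,dW^A_t$. When $b=0$ this is critical and hence extinct in finite time a.s., which gives (iii). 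When $b>0$ it is supercritical, so starting from $y>0$ its extinction probability equals $e^{-2by} \in (0,1)$ and on the survival event $\xi^A_t \to \infty$ a.s. This yields both $\P(\tau^A_0 < \infty \mid \xi^A_{\tau^B_0} > 0) = \E[e^{-2b\xi^A_{\tau^B_0}} \mid \xi^A_{\tau^B_0} > 0] \in (0,1)$ and, on $\{\tau^A_0 = \infty\}$, the divergence $\xi^A_t \to \infty$ almost surely.

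The delicate remaining point in (ii) is the two-sided bound $\P(\tau^A_0 < \infty) \in (0,1)$. The lower bound follows once more from comparison: $\xi^A \le \widetilde\xi^A$ with $\widetilde\xi^A$ a supercritical Feller diffusion gives $\tau^A_0 \le \widetilde\tau^A_0$ and $\P(\widetilde\tau^A_0 < \infty) = e^{-2b\xi^A_0} > 0$. The upper bound reduces, via the post-$\tau^B_0$ analysis above, to showing $\P(\xi^A_{\tau^B_0} > 0) > 0$, and this is the step I expect to require the most care. I would condition on $\mathcal F^B := \sigma(W^B)$, so that the path of $\xi^B$ is fixed and $\xi^A$ becomes a time-inhomogeneous Feller diffusion with drift coefficient $b - c\xi^B_t$; since $\tau^B_0 < \infty$ a.s.\ with $\int_0^{\tau^B_0} \xi^B_s\,ds < \infty$ a.s., either an explicit Laplace-transform calculation for the time-inhomogeneous Feller diffusion or a Girsanov argument removing the time-dependent piece of the drift on a finite window shows $\P(\xi^A_{\tau^B_0} > 0 \mid \mathcal F^B) > 0$ on a set of $W^B$-paths of positive measure. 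Integrating yields $\P(\xi^A_{\tau^B_0} > 0) > 0$, and combining this with the supercritical post-$\tau^B_0$ survival completes (ii).
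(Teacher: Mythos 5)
Your overall strategy coincides with the paper's: well-posedness by localization, domination of $(\xi^A,\xi^B)$ by the pair of independent Feller diffusions obtained by dropping the competition terms, extinction of $\xi^B$ via the critical dominating process, and the strong Markov property at $\tau^B_0$, after which $\xi^A$ is a one-dimensional (possibly supercritical) Feller diffusion with extinction probability $e^{-2by}$ and divergence on survival. The one genuine difference is the well-posedness step: the paper truncates from \emph{below} at level $\varepsilon$ so that $\sqrt{\cdot}$ becomes Lipschitz, gets uniqueness up to $\tau^A_\varepsilon\wedge\tau^B_\varepsilon$ from classical Lipschitz theory, and then observes that after $\tau_0$ one coordinate is absorbed and the other is a standard Feller diffusion; you truncate from \emph{above} and combine a componentwise Yamada--Watanabe argument (valid here since the diffusion matrix is diagonal with $\sigma_i$ depending only on $x_i$) with Girsanov from the $c=0$ case. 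Both routes work; yours avoids a separate discussion of the boundary $\{0\}$, the paper's avoids Girsanov.

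One step fails as written: ``condition on $\mathcal F^B:=\sigma(W^B)$, so that the path of $\xi^B$ is fixed.'' The path of $\xi^B$ is \emph{not} $\sigma(W^B)$-measurable, because its drift $-c\xi^A_t\xi^B_t$ involves $\xi^A$ and hence $W^A$; freezing $W^B$ does not decouple the system, so ``$\xi^A$ becomes a time-inhomogeneous Feller diffusion with drift coefficient $b-c\xi^B_t$'' is not a legitimate reduction. The conclusion $\P(\xi^{A}_{\tau^B_0}>0)>0$ is nevertheless true, and your Girsanov alternative can be repaired by applying it to the pair rather than to one coordinate: on $[0,T]$ and localized to the event that both coordinates stay below $N$, the law of $(\xi^A,\xi^B)$ is equivalent to that of the independent pair, for which the event that $\xi^B$ dies by time $T$ while $\xi^A$ stays in $(0,N)$ on $[0,T]$ clearly has positive probability; equivalence of measures transfers this to $\P$, and on that event $\xi^{A}_{\tau^B_0}>0$. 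You are right that this is the delicate point --- the paper itself passes over it, asserting the dichotomy ``either $\xi^A$ touches zero before $\xi^B$ \dots\ or $\xi^{A}_{\tau^B_0}>0$'' without verifying that the second alternative occurs with positive probability. The rest of your argument for (i)--(iii) is correct and matches the paper's.
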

As announced in the Introduction, our main goal is the characterization of {\em evolving marked genealogies} that underlie the system \eqref{twotype}. It turns out that an accessible way to this goal leads via the {\em total mass process} $\xi = \xi^A+\xi^B$.
Adding the two equations in \eqref{twotype} leads to the following stochastic differential equation (SDE) for $\xi$ and the type proportions $\mu_t^A$, $\mu_t^B$:
\begin{equation}\label{totmass}
d\xi_t= (b \mu_t^A \xi_t - 2c  \mu_t^A \mu_t^B \xi_t^2) dt + \sqrt{\xi_t}\,  d {W}_t, \\
\end{equation}
with $W$ a standard Brownian motion. Equation \eqref{totmass} cannot be solved in terms of $W$ without knowing the type frequencies $\mu_t^A, \mu_t^B$ (which in turn are obtained from \eqref{twotype}); in this sense \eqref{totmass}  is not autonomous. In addition to~$W$, which takes care of the fluctuations of the population size in the interplay with the current type proportions, the other drivers of the evolving marked genealogy that will trigger the (neutral and selective) reproductive events will be Poisson point processes that come up in the lookdown framework described in Section  \ref{section:LDrepresentation}. Closing the circle, Proposition \ref{prop:projection} will then guarantee that the pair $(\xi^A, \xi^B)$ can be restored from the total mass process $\xi$ together with the evolving marked genealogy $Y$, thus rendering a weak solution of \eqref{twotype}. The process $Y$ will be described in Section \ref{SecGen} and defined in Section~\ref{mppopsizeandgenealogy}.

The following time change will be instrumental (see also  \cite{MR1681126}):
 \begin{equation}\label{tch}
  t \mapsto s=s(t) := \int_0^t \frac 1{\xi_v}\,  dv,   \qquad \zeta_{s(t)} = \xi_t\, .\end{equation}
For reasons explained in Section \ref{section:LDrepresentation}, the timescale $s$ will be called the {\em lookdown timescale}. In Section \ref{section:LDrepresentation}, we will provide a system of stochastic differential equations that describes  in this timescale a population size process $(\zeta_s)$ together with an evolving type configuration $(G_s)$ whose state space is $\mathbb I^{\mathbb N}$  (with $\mathbb I = \{A,B\}$) and to which we will associate a process of type frequencies $(\mu^{G_s})$, see Theorem \ref{Gprop}.
 As a corollary, transforming back to the timescale $t$ via \eqref{tch}, the resulting process $(\zeta_{s(t)}\mu^{G_{s(t)}}\{A\}, \zeta_{s(t)}\mu^{G_{s(t)}}\{B\})$ will provide a  weak  solution of \eqref{twotype}, see Proposition \ref{prop:projection}.

In the neutral case ($b=c=0$), $(\xi_t)$ is a standard Feller diffusion, and the  process $(\mu_t^A)$ after the time change \eqref{tch} turns into a standard Wright-Fisher diffusion (e.g. \cite[Ch. IV.8]{ikedawatanabe}). The correspondence $(\xi_t^A, \xi_t^B) \leftrightarrow ((\xi_t), (\mu_t^A, \mu_t^B))$ is thus an interactive counterpart of  Perkins' desintegration of super-Brownian motion into a Feller branching diffusion and a time-changed Fleming-Viot process (see \cite{Eth2000} p. 83, \cite{Pe1992}).

\subsection{Genealogies}\label{SecGen}
The {\em marked genealogy} of the continuum population at some fixed time is described by the joint distribution of pairwise genealogical distances and types of a sequence of individuals that is drawn    i.i.d. according to a prescribed sampling measure. In order to formalize this, and to define the  space  of marked genealogies, we recall a few concepts. In our context, genealogical distances of contemporaneous individuals are described by a semi-ultrametric, i.e.~a semi-metric $d$ that satisfies the strong triangle inequality $\max\{d(x,y), d(y,z)\} \ge d(x,z)$. The prefix {\em semi} means that $d(x,y) = 0$ does not imply $x=y$, corresponding to the fact that at the time of a reproduction event, the ``mother'' and her ``daughter'' have genealogical distance $0$, while being considered as different individuals.

Marked metric measure spaces have been introduced by Depperschmidt, Greven, and Pfaffelhuber~\cite{DGP11}.
An {\em $\mathbb I$-marked ultrametric measure} space is a triple $(\tau, d, \mathrm m)$ where $(\tau, d)$ is a complete, separable ultrametric space and  $\mathrm m$ is a probability measure on the Borel sigma algebra on the product space $\tau \times \mathbb I$. In our context, such spaces $(\tau, d)$ will arise  as completions of semi-ultrametric spaces, after first identifying elements of distance zero, see  Definition \ref{defproper} a) in Sec. \ref{sec:pathwiseconstructionofgenealogies}.

The marked distance matrix distribution of an $\mathbb I$-marked ultrametric measure space $(\tau, d , \mathrm m)$ is defined as the distribution of $((d(V_i, V_j))_{i,j\in \mathbb N}, (H_i)_{i\in \mathbb N} )$ where $(V_i, H_i)_{i\in \mathbb N}$ is a sequence in $\tau \times \mathbb I$, i.i.d.~with distribution $\mathrm m$. (Here and below, $\mathbb N=\{1,2,\ldots\}$ denotes the set of natural numbers. Recall also that $\mathbb I=\{A,B \}$). Marked ultrametric measure spaces with the same marked distance matrix distribution are called {\em isomorphic}.

The space of isomorphy classes of $\mathbb I$-marked ultrametric measure spaces will be denoted by~$\mathbb M$, and will be called the {\em space of marked genealogies}. This space $\M$, equipped with the marked Gromov-weak topology in which elements of $\M$ converge if and only if the associated marked distance matrix distributions weakly converge, is Polish \cite{DGP11}. In Theorem \ref{martprobth} we will characterize an $(0,\infty) \times \mathbb M$-valued process $(\xi, Y)$ by a stopped martingale problem (in the sense of \cite[Ch. 4.6]{ethierkurtz}). The first component of this process will describe the population size, and will give a weak solution of \eqref{totmass}. The second component will describe the marked genealogy, with the type frequencies being a measurable function of the latter.

\subsection{Lookdown representation of the joint process of population size, type frequencies, and genealogies}\label{section:LDrepresentation}

We are going to provide a representation of the just mentioned process $(\xi, Y)$ in terms of a process $(\zeta, R, G)$ which will be the unique strong solution of a system of SDE's in the time scale~\eqref{tch}, see Theorems \ref{Gprop} and~\ref{zetaRG}. The process $R$ will take its values in the semi-ultrametrics on $\mathbb N$ (which we will address as {\em distance matrices} for short). The underlying graphical representation includes, in addition to a Brownian motion $\mathcal W$,  a pair $(\mathcal L, \mathcal K)$ of Poisson point processes (defined in Sec. \ref{sec:pathwiseconstructionLD}). The triple $(\mathcal W, \mathcal L, \mathcal K)$ does not only drive the process $(\zeta,G)$ in terms of an SDE (see Theorem \ref{Gprop}), but also the process $R$, see~\eqref{eqRG}.

We will deduce in Proposition \ref{MPA} that $(\zeta,R,G)$ solves a well-posed martingale problem. This will be an essential ingredient for the proof of Theorem \ref{martprobth}, which provides the characterization of $(\xi, Y)$ in terms of a well-posed martingale problem. This proof, like the one of Proposition \ref{MPA}, will rely on an application of Kurtz' Markov mapping theorem \cite{Kurtz98}, which for our purposes turns out to be more adequate than its modification in \cite{EtheridgeKurtz} (see Remark \ref{KMMTH}).

Individuals living in the  lookdown system at time $s$ are coded by $(s,i)$, $i=1,2,\ldots$. (As we will see from the constructions explained in Sec. \ref{sec:pathwiseconstructionofgenealogies}, this is only a subset of the uncountably many individuals living at time $s$, namely the subset consisting of those individuals who have an offspring that survives for some positive amount of time.) The second component of $(s,i)$ is called \textit{level}; it labels the individuals alive at time~$s$ and having an offspring at some time strictly larger than~$s$. The graphical construction will allow to reconstitute the ancestral paths of the individuals $(s,i)$. The evolution of the genealogical distances and the types  of these individuals will be described by the process
\[X=(R,G) = ((R_s,G_s)_{s\in[0,\sigma)}),\]
where $\sigma$ is the time at which $\zeta$ goes to extinction or explodes,
\begin{equation}\label{e:def-sigma}
\sigma:= \inf\{s\ge 0: \zeta_s = 0 \mbox{ or }  \zeta_{s-} = \infty\}.
\end{equation}
The second component $G_s = (G_s(i))_{i\in \mathbb  N} \in \mathbb I^{\mathbb N}$  of this process is the type configuration at time $s$.
The first component  $R_s= (R_s(i,j))_{{i,j} \in \mathbb N}$ is a random semi-ultrametric on~$\mathbb N$ that describes the genealogical distances between the individuals at time $s$ in the time scale of the interactive branching system \eqref{twotype}. That is, if the most recent common ancestor of $(s,i)$ and $(s,j)$ lived at time $s'<s$, then $R_s(i,j)=2(t(s)-t(s'))$, with
\begin{align}\label{inversetch}
t(s) := \int_0^s \zeta_u\, du
\end{align}
 being the inverse of the time change \eqref{tch}.

We think of our initial value $(R_0,G_0)$ as the genealogical distances and the types of a sequence of individuals that are drawn independently at random from an infinite population at time $0$.  Specifically, a basic assumption made throughout the paper will be that $(R_0,G_0)$ is distributed according to the marked distance matrix distribution of some $\mathbb I$-marked ultrametric measure space (as defined in Sec.~\ref{SecGen}).

Obviously this assumption implies that the pair $(R_0,G_0)$ is {\em exchangeable} in the sense that for all $n \in \mathbb N$ and all permutations $\pi$ of $[n] = \{1,\ldots,n\}$ one has
\begin{equation}\label{exch}
\big((R_0(i,j))_{1\le  i,j\le n}\, , (G_0(i))_{1\le  i\le n} \big) \stackrel d= \big((R_0(\pi(i),\pi(j))_{1\le i,j\le n}\, , (G_0(\pi(i))_{1\le i\le n}\big).
\end{equation}

Conversely, a version of the Gromov-Vershik representation theorem  (\cite[Corollary 3.12]{Gu2})
ensures that each $(R_0,G_0)$ obeying \eqref{exch} can be realized as the second step in a two-stage experiment, whose first step is the random choice of (an isomorphy class of) a marked ultrametric measure space (or equivalently of a marked distance matrix distribution), and whose second step is the marked distance matrix that arises by an i.i.d. drawing from that marked ultrametric measure space.

Let us remark that also the trival initial condition $R_0(i,j)=0$, $i,j \in \N$, together with an exchangeable~$G_0$, fits into this framework as a special case.

\subsubsection{Type configuration and type frequencies}\label{sec:type}
The process $X=(R,G)$ provides ``microscopic" information on the  type configuration and genealogies of the individuals in the lookdown system. The fluctuations of the population mass obtained from~\eqref{totmass} and the time change~\eqref{tch} deal with ``macroscopic'' quantities and are not seen directly in the lookdown representation. However both scales are coupled: we will see that the type frequencies arise from the microscopic (i.e.~individual-based) type configurations and appear in the coefficients of the SDE \eqref{totmass} whose solution in turn will impact  the local dynamics of the lookdown levels.\\

For a type configuration $g \in \mathbb I^\mathbb N$ we will say that {\em $g$ admits type frequencies} if the limiting measure
\begin{equation}\label{mug}
\mu^g:= \lim_{n\to \infty} \frac 1n \sum_{i=1}^n \delta_{g(i)}
\end{equation}
exists in the weak topology on $M^1(\mathbb I)$, the space of probability measures on $\mathbb I$ (which in our case with two traits simply means that $\mu^g\{A\}  :=  \lim_{n\to \infty} \tfrac{1}{n} \sum_{i=1}^n \mathbf 1_{\{g(i)=A\}}$ exists). We will then call $\mu^g$ the {\em type distribution belonging to} $g$.\\
We will construct the type process $G$ in such a way that it a.s. admits type frequencies at every time~$s$, hence allowing to read off the proportion $\mu^{G_s}\{A\}$ of type $A$ at time $s$ from the configuration $(G_s(i))_{i\in \N}$. \\

These proportions will play a role in the dynamics of genealogies and type configurations (see \eqref{fecdraw} and \eqref{eqG} below), and also in the SDE \eqref{totmass} for the total mass process, which in view of the time change~\eqref{tch}, becomes:
\begin{equation}\label{totmassLD}
d\zeta_s= \left(b \mu^{G_s}\{A\}\zeta^2_s - 2c\mu^{G_s}\{A\}\mu^{G_s}\{B\} \,   \zeta_s^3\right) ds + \zeta_s d \mathcal W_s,
\end{equation}
where $\mathcal W$ is a standard Brownian motion.
Similarly, \eqref{twotype} becomes
\begin{align}
\begin{split}
\label{twotypeLD}
d\zeta_s^A&= b \mu^{G_s}\{A\} \zeta_s^2 ds -c\mu^{G_s}\{A\} \mu^{G_s}\{B\} \zeta_s^3\, ds+ \sqrt{\zeta_s^A\zeta_s}\ d\mathcal W_s^{A} \\
d\zeta_s^B&= -c \mu^{G_s}\{B\} \mu^{G_s}\{A\} \zeta_s^3\,ds + \sqrt{\zeta_s^B  \zeta_s}\ d\mathcal W_s^{B},
\end{split}
\end{align}
where $\mathcal W^A$ and $\mathcal W^B$ are independent standard Brownian motions.
Possible explosion or extinction events are treated at the beginning of Section~\ref{sec:martingalepbzetaGLambda}.

\subsubsection{Pathwise construction of the lookdown process}\label{sec:pathwiseconstructionLD}
The construction of the process $(\zeta_s, G_s)_{s\in[0,\sigma)}$ is achieved via the so called `lookdown' graphical construction.
The ingredients are
\begin{itemize}
\item[(I1)]
a standard Brownian motion $\mathcal W=(\mathcal W_s)_{s\ge 0}$,
\item[(I2)]
a family $ (\LL_{ij})_{i,j \in \mathbb N, i< j}$ of independent rate 1 Poisson point processes on $\mathbb R_+$,
\item[(I3)]
a family $(\mathcal K_i)_{i \in \mathbb N}$ of independent Poisson point processes on $\mathbb R_+ \times \mathbb R_+ \times [0,1]\times\{\beta, \delta\}$ whose intensity measure is the product of the Lebesgue measure on $\mathbb R_+ \times \mathbb R_+ \times [0,1]$ and of the counting measure on $\{\beta,\delta\}$,
\end{itemize}
where the random elements 	in (I1), (I2) an (I3) are independent. Here and below we use the notation $\mathbb R_+ := [0,\infty)$. The first component of the state space of $\mathcal K_i$ is the time axis, its second and third component will serve as {\em activation level} and {\em sampling seed} at the corresponding events, and the symbols $\beta$ and $\delta$ will mark the {\em potential selective birth events} and the {\em potential competitive death events}, respectively; see the update rule \eqref{fecdraw} and the explanations preceding it, as well as the illustration in Figure~\ref{fig:lookdownwithselectionandcompetition}.   The family  $(\LL_{ij})$ can be combined to a Poisson point process $\mathcal L$ on $\mathbb R_+ \times \{(i,j)\in \N^2 : 1 \le i < j < \infty\}$, and the family  $(\mathcal K_i)$ can be combined to a Poisson point process $\mathcal K$ on $\mathbb R_+ \times \bigcup\limits_{i\in \mathbb N} \{i\} \times \mathbb R_+ \times [0,1]\times \{\beta, \delta\}$.
In this way $\LL_{ij}$ corresponds to the restriction of $\mathcal L$ to $\mathbb R_+ \times \{(i,j)\}$  and $(\mathcal K_i)$ to the restriction of $\mathcal K$ to  $\mathbb R_+ \times \{i\} \times \mathbb R_+ \times [0,1]\times \{\beta, \delta\}$. In summary, the Brownian motion $\mathcal{W}$ drives the  fluctuations of the population size, $\mathcal L$ encodes the neutral birth events, and $\mathcal K$ encodes the  potential selective  events affecting the levels in the graphical construction.

 To each atom of $\LL_{ij}$ (for $i<j$), say at time $s$, we associate an arrow starting from $(s,i)\in \R_+\times \N$ and directed to $(s,j)$. This arrow corresponds to a natural birth for the individual at level $i$ at time $s$, placing an offspring of the same type at level $j$. Levels that were equal to or above $j$ at time $s-$ (i.e.~levels $k\in \N$ such that $k\ge j$) are shifted up by~1. See Figure \ref{fig:neutralgenealogy} and cf.\ also \cite{PW06}. {This lookdown process can be seen as the limit of a finite particle system as described in Donnelly and Kurtz \cite{donnellykurtz_99} 
 where individuals with highest levels are removed at natural death events. That is why, heuristically, the natural death events are not seen any more on finite levels in the limit of infinitely many particles of small masses, as the highest level tends to infinity.
It is now the varying population mass $(\zeta_s)$ that tracks the changing mass due to demographic events.}

\begin{figure}[!ht]
\begin{center}
\includegraphics[width=17cm]{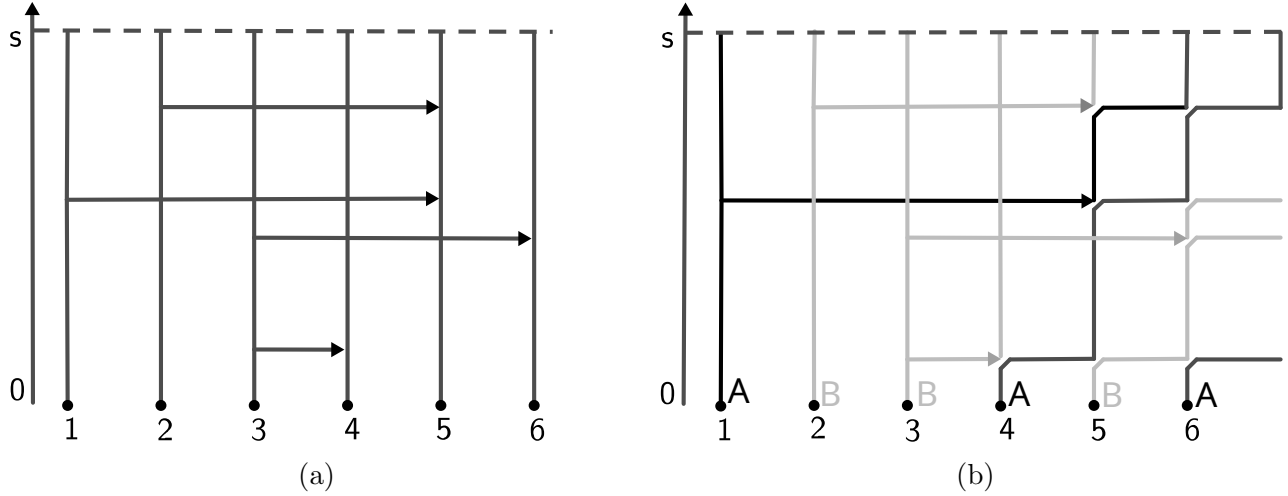}
(a) \hspace{8cm} (b)
\caption{{\small \textit{Neutral genealogy in the lookdown representation: individuals correspond to levels (abscissa), and time runs along the ordinate axis. The neutral genealogy uses only atoms of the Poisson point process $\LL_{ij}$ (for $i<j$). (a) The atoms of $\LL_{ij}$ correspond to arrows from level $i$ to level $j$. (b) Using the arrows one defines a `neutral' genealogy: an arrow from $i$ to $j$ at time $s$ corresponds to individual $i$ at time $s$ giving birth to a daughter (of the same type) at level $j$. Levels that were above $j$ at time $s-$ are shifted up by 1. Given the arrows and the type of the ancestral individuals (at the bottom of the picture), it is possible to reconstruct the types in the population at every time for the neutral genealogy by following the lineages back in time (travel along the arrows in opposite direction).}}}\label{fig:neutralgenealogy}
\end{center}
\end{figure}

To obtain the (effective) {\em selective births} and the {\em competitive deaths}, we will use a state dependent thinning of the Poisson point processes $\mathcal K_i$, to take into account the dependencies with respect to the rest of the population. As already mentioned, the marks $\beta$ and $\delta$ specify whether an atom of $\mathcal K_i$ corresponds to a potential selective birth or a potential competitive death. The variable $s$ is the time at which the atom is encountered. The probabilities with which the potential selective births and the competitive selective deaths become effective involve interactions and thus depend on the state of the process. Consequently, our pathwise construction works with an acceptance-rejection rule that uses the marks $(z,w)\in \R_+\times [0,1]$.  Before we give a definition of the rule in \eqref{fecdraw}, we explain it in words (see also Figure~\ref{fig:lookdownwithselectionandcompetition} for an illustration).
The rule is in accordance   with \eqref{twotypeLD}: 
\begin{itemize}
\item To an atom $(s,z,w)$ of $\mathcal   K_i(.  \times \{\beta\})$ with $s<\sigma$ and $z\leq b\mu^{G_{s-}}\{A\} \zeta_{s-}$ corresponds a selective birth: the individual sitting previously at level $i$ is replaced by the daughter of an individual chosen  `uniformly' from all the individuals of type $A$ that live at time $s-$.
\item
To an atom $(s,z,w)$ of $\mathcal   K_i(.  \times \{\delta\})$ with $s<\sigma$ corresponds a competitive death if $G_{s_-}(i)=B$ and $z\leq c\mu^{G_{s-}}\{A\}\zeta_{s-}^2$, or if $G_{s_-}(i)=A$ and $z\leq c\mu^{G_{s-}}\{B\}\zeta_{s-}^2$. Then the individual at level $i$ dies from the competition pressure exerted by the other type and is replaced by the daughter of an individual chosen  `uniformly' from all the individuals that live at time $s-$, irrespective of their type.
\end{itemize}
The way to sample an individual `uniformly' from all the individuals alive at a certain time
will be described in Sec.~\ref{sec:pathwiseconstructionofgenealogies}, and will be formally specified in Sec.~\ref{neut}.
Indeed we will show that it is possible to define (random) sampling measures $m^{R_{s-},G_{s-}}(d\theta,dh)$ on $\mathbb{T}\times \mathbb{I}$ where $\mathbb{T}$ will be the completion of the set of levels $\mathbb{N}$ with respect to the ultrametric $R_{s-}$ that will be described in Sec.~\ref{sec:pathwiseconstructionofgenealogies} and constructed in Sec.~\ref{sec_sel-gen}.

\begin{figure}[!ht]
\begin{center}
\includegraphics[width=10cm]{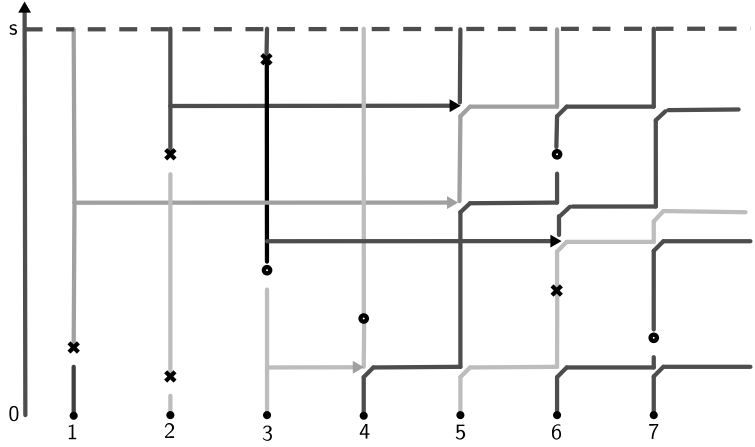}
\caption{{\small \textit{The Poisson point measures $\mathcal{K}_i$, $i\in \N$ appear on top of the neutral lookdown representation. Their atoms correspond to potential selective births (marked as $\circ$) or potential competitive deaths (marked as $\times$). Together with the individuals $(0,i)$ they form the roots of the fragments of $Z$, see Sec.\ref{RaF}. Each potential event may become active or not, according to the acceptance-rejection rule \eqref{fecdraw} explained in the text. At an active event, the individual at the existing level is replaced by the ``daughter'' of an individual  sampled at random among the $A$ individuals (selective births) or among the whole population (competitive death). 
}}}\label{fig:lookdownwithselectionandcompetition}
\end{center}
\end{figure}

To implement the acceptance-rejection rule mentioned  in the previous paragraph, we will make use of a measurable mapping $\kappa: M^1(\mathbb I) \times [0,1] \to \mathbb I$ which is such that for a  random variable $\Upsilon$ that is uniformly distributed on the interval $[0,1]$, and all $\nu \in M^1(\mathbb I)$, the random variable $\kappa (\nu, \Upsilon)$ has distribution $\nu$. This will allow us to construct  random variables with a prescribed distribution $\nu$, using  the third component of an atom of the Poisson point measures  $\mathcal{K}_i$ as input for $\kappa(\nu,.)$. Recall that this third component is uniformly distributed on $[0,1]$.\\
Our update rule for the process of type configurations $G$ works by means of a mapping
\begin{align*} q:  \mathbb I \times \mathbb I^\mathbb N \times \mathbb R_+ \times \mathbb R_+\times [0,1] \times \{\beta, \delta\} \to \mathbb I
\end{align*} which prescribes how to change the type $g(i)$ given $G_{s-}(i) =h$,  $G_{s-} = g$ and $\zeta_{s-} = v$, and given there is an atom of $\mathcal K_i$ at $(s,z, w,\beta)$ or $(s,z, w,\delta)$.
Specifically, we put for a  $g$ that admits type frequencies
\begin{align}\begin{split}\label{fecdraw} 
q(h, g,v, z, w, \beta) &:= \begin{cases}A &\qquad\mbox{ if } z \le b\, \mu^g\{A\}\,  v,  \\ h  &\qquad \mbox{ otherwise,}   \end{cases}
\\
q(h, g,v, z, w, \delta) &:= \begin{cases}\kappa(\mu^g, w) &\mbox{ if } z \le c\, \mu^g\{B\}\,  v^2 \mbox{ and } h=A,  \\ & \mbox{ or}\, \,  z \le c\, \mu^g\{A\}\,  v^2 \mbox{ and } h=B,\\
h  &\mbox{ otherwise.} \end{cases}
\end{split}
\end{align}

As in \cite[Section 4]{MR1728556} we identify the type space $\mathbb I=\{A,B\}$ with the additive group $\{0,1\}$. This corresponds to considering $\ind_{\{G_s(i)=B\}}$ instead of the type $G_s(i)$ itself and allows to formulate more easily our SDE for the process of type configurations $(G_s)$. With an initial condition $G_0$ admitting type frequencies according to \eqref{mug}, this is
\begin{align}\label{eqG}
\begin{split}
G_s(j) = G_0(j)&+ \sum_{i=1}^{j-1}\int_{[0,s]}(G_{u-}(i)-G_{u-}(j))d\LL_{ij}(u)\\ &+  \sum_{1\le i<k<j}\int_{[0,s]}(G_{u-}(j-1)-G_{u-}(j))d\LL_{ik}(u)
\\ &+\int_{[0,s]\times \mathbb R_+\times [0,1]\times\{\beta,\delta\}} (q(G_{u-}(j), G_{u-},     \zeta_{u-}, z,w, \omega) - G_{u-}(j)) d\mathcal K_j(u,z, w,\omega),
\end{split}
\end{align}
for $j \in \mathbb N$.

The following theorem characterizes the mass and type configuration process $(\zeta, G)$ with the triple $(\mathcal W, \mathcal L, \mathcal K)$ as the source of randomness, and also asserts the fact that a.s. $G$ admits type frequencies at any time.

\begin{theorem}\label{Gprop}
Let $G_0$ be exchangeable. Then the system  \eqref{totmassLD}, \eqref{eqG} for the total mass process~$(\zeta_s)$ and the type configurations $(G_s)$ has a unique strong solution, up to the possibly infinite time $\sigma$ defined in~\eqref{e:def-sigma} at which $\zeta$ goes to extinction or explodes. For this unique solution, a.\,s.\ the type frequencies $\mu^{G_s}$ and $\mu^{G_{s-}}$ (as defined in \eqref{mug}) exist for all $s\in[0,\sigma)$, i.\,e.\
\begin{equation}\label{eq:sampling}
	\mu^{G_s}\{A\}=\lim_{n\rightarrow\infty}\frac{1}{n}\sum_{i=1}^n \ind_{\{G_s(i)=A\}},\quad
	\mu^{G_{s-}}\{A\}=\lim_{n\rightarrow\infty}\frac{1}{n}\sum_{i=1}^n \ind_{\{G_{s-}(i)=A\}}.
\end{equation}
Moreover, $s\mapsto \mu^{G_s}\{A\}$ and $s\mapsto \zeta_s$ are a.\,s.\ continuous.
\end{theorem}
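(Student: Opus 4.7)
The main difficulty is the mean-field coupling: both the drift in \eqref{totmassLD} and the activation thresholds in \eqref{fecdraw} depend on the type frequency $\mu^{G_{s-}}\{A\}$, which is itself an asymptotic functional of the infinite configuration $G_{s-}$. My plan is to break this circular dependence by a finite-level truncation, to extract the limiting frequency using exchangeability and de Finetti's theorem, to identify the limit of the truncations as the unique strong solution, and finally to read off the continuity of $\mu^{G_s}\{A\}$ from the fact that each jump event perturbs any finite empirical sum by at most one unit.

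First I would localize via $\sigma_n:=\inf\{s\geq 0:\zeta_s\notin[1/n,n]\}\wedge n$ so that all Poisson rates acting on any fixed finite set of levels are uniformly bounded on $[0,\sigma_n]$. For each $N\in\mathbb N$ I would then construct a finite-level approximation $(\zeta^{(N)},G^{(N)})$ by replacing $\mu^{G_{s-}}$ with the empirical frequency $\hat m^{(N)}_{s-}:=\frac{1}{N}\sum_{i=1}^N \mathbf 1_{\{G^{(N)}_{s-}(i)=A\}}$ in both \eqref{totmassLD} and \eqref{fecdraw}, and by retaining only the atoms of $\mathcal L_{ij}$ with $j\leq N$ and of $\mathcal K_i$ with $i\leq N$. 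On $[0,\sigma_n]$ this is a coupled Brownian/jump SDE with bounded intensities and locally Lipschitz coefficients, whose unique strong solution is classical. Standard exchangeability-preservation in Donnelly--Kurtz-type constructions ensures that $G^{(N)}_s$ is exchangeable on $\{1,\ldots,N\}$ for every $s$, so that by de Finetti's theorem and Kolmogorov's strong law the limit of $\hat m^{(N)}_s$ exists almost surely for each fixed $s$.

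The hardest step is to identify the limit as $N\to\infty$ of $(\zeta^{(N)},G^{(N)})$ as a strong solution of the original system \eqref{totmassLD}, \eqref{eqG}. Coupling all $N$-level approximations on a common probability space via the same triple $(\mathcal W,\mathcal L,\mathcal K)$, I would set up a Gronwall-type estimate for $\mathbf E\sup_{s\leq \sigma_n}|\zeta^{(N+1)}_s-\zeta^{(N)}_s|$ and for the analogous difference of empirical frequencies, exploiting that on $\{s\leq \sigma_n\}$ the coefficients of \eqref{totmassLD} and the thresholds in \eqref{fecdraw} are Lipschitz in $(\zeta,\mu\{A\})\in[1/n,n]\times[0,1]$. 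This yields Cauchyness in a suitable norm, hence a limit $(\zeta,G)$; passing to the limit in the SDEs and invoking the almost sure convergence $\hat m^{(N)}_s\to\mu^{G_s}\{A\}$ shows that $(\zeta,G)$ solves the original system. Pathwise uniqueness then follows from the same Gronwall argument applied to the difference of any two solutions driven by the same triple, and letting $n\to\infty$ extends the construction up to $\sigma$.

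Finally, continuity of $\zeta$ is immediate from \eqref{totmassLD} as a Brownian SDE. For $s\mapsto\mu^{G_s}\{A\}$, I would observe that at any atom of $\mathcal L_{ij}$ or $\mathcal K_i$ the sum $\sum_{k=1}^n\mathbf 1_{\{G_s(k)=A\}}$ is altered by at most one unit (a single type change, together with a one-step shift in the lookdown case), so that dividing by $n$ and letting $n\to\infty$ gives $\mu^{G_s}\{A\}=\mu^{G_{s-}}\{A\}$ at every jump time; since $G$ is piecewise constant between jumps, $s\mapsto\mu^{G_s}\{A\}$ is a.s.\ continuous, which in particular yields the existence of the left-hand limits asserted in \eqref{eq:sampling}.
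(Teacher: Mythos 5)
Your overall architecture (localize the mass, approximate, Gronwall, pass to the limit) is reasonable, but it differs from the paper's route -- the paper runs a Picard iteration in the \emph{number of iterations} $k$ on the full infinite-level system (with the mass truncated via $f_M$), not a truncation in the \emph{number of levels} $N$ -- and your version has two genuine gaps. The first concerns the existence of the type frequencies $\mu^{G_s}\{A\}$ \emph{simultaneously for all} $s$ on a single event of probability one. De Finetti plus the strong law gives the limit in \eqref{eq:sampling} only for each \emph{fixed} $s$, on a null event depending on $s$; but the drift of \eqref{totmassLD} needs $\mu^{G_s}\{A\}$ at Lebesgue-a.e.\ $s$ and the activation thresholds in \eqref{fecdraw} need $\mu^{G_{s-}}\{A\}$ at the \emph{random} times charged by $\mathcal K$, which are not independent of $G$. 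Moreover, your $\hat m^{(N)}_s$ are empirical frequencies of \emph{different} systems for different $N$ (the $N$-level and $(N+1)$-level dynamics are not projections of one another once you replace $\mu^{G}$ by $\hat m^{(N)}$), so their convergence in $N$ is not a law-of-large-numbers statement within one exchangeable sequence at all; and the telescoping bound $\mathbf E|\hat m^{(N+1)}_s-\hat m^{(N)}_s|=O(1/N)$ that a Gronwall comparison of consecutive truncations would yield is not summable, unlike the factorial decay $T^k/k!$ the paper obtains from its Picard scheme. The paper resolves the ``all $s$ simultaneously'' issue structurally: it dissects the neutral lookdown space into monotypic fragments $\Gamma_\gamma$ rooted at the atoms of $\mathcal K$ and \emph{defines} $\mu^{(k)}_s\{h\}=\sum_{\gamma:\,\text{type }h}\mathfrak m_s(\Gamma_\gamma)$ via the neutral sampling measures of \cite{Gu1}, which exist for all $s$ on one full-measure event (Lemmas \ref{lem:mass}--\ref{colouringlem}); Lemma \ref{lem:desc-mu} then identifies this with the Ces\`aro limit.

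The second gap is the continuity of $s\mapsto\mu^{G_s}\{A\}$. Your argument -- each atom of $\mathcal L$ or $\mathcal K$ alters $\sum_{k\le n}\ind_{\{G_s(k)=A\}}$ by at most one unit, and $G$ is piecewise constant between jumps -- does not work: the full configuration $G$ is \emph{not} piecewise constant, since the total jump rate over all levels is infinite and the jump times are dense in $[0,\sigma)$. Knowing that each finite restriction $\frac 1n\sum_{k\le n}\ind_{\{G_s(k)=A\}}$ has jumps of size $1/n$ says nothing about continuity of the $n\to\infty$ limit unless the convergence is uniform in $s$, which is exactly what must be proved (the $n$-level restriction makes $O(n^2)$ jumps per unit time, so its total variation is not small). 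The paper derives continuity from the a.s.\ continuity of $s\mapsto\mathfrak m_s$ and of the fragment masses $s\mapsto\mathfrak m_s(\Gamma_\gamma)$ (Lemma \ref{lem:cont-frag-masses}), together with the uniform-in-$s$ exhaustion of the mass by finitely many fragments (Lemma \ref{colouringlem}); some substitute for this uniformity is indispensable in your approach as well.
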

Here we set $0-:=0$. The proof of Theorem~\ref{Gprop} will be given in Section~\ref{sec_itscheme}, based on the preparations in Sections~\ref{neutsamp} and~\ref{RaF}. In Section~\ref{LDgen}, we will build the genealogy with selection and competition on top of the neutral genealogy. The next two subsections will explain the main ideas and tools of this construction.

\subsubsection{Filling in the ancestry: from the neutral to the  selective lookdown space}\label{sec:pathwiseconstructionofgenealogies}

With the total mass process $\zeta$ and the type configuration process $G$ being provided by Theorem~\ref{Gprop}, we can construct the ancestral lineages and ``fill in'' the process $R$ in a pathwise manner. In this subsection we will explain the graphical construction  of the process $R$ on top of  $(\zeta, G, \mathcal L, \mathcal K)$.

A crucial role will be played by a family of {\em sampling measures}. These arise as follows.  As illustrated by Figure \ref{fig:neutralgenealogy} (see Section \ref{neut} for a formal definition), the Poisson point process $\mathcal L$ together with the initial random semi-ultrametric $R_0$ give rise to a $(R_0,\mathcal L)$-measurable random semi-metric $\rho^{(0)}$ on $\mathbb R_+\times \mathbb N$, where
$\rho^{(0)}((s,i),(s',i'))$ is the genealogical distance of $(s,i)$ and $(s',i')$ in the neutral case (i.e.~without considering the atoms of the Poisson point measures $\mathcal{K}$ associated with selective births and competitive deaths). The completion of $(\mathbb R_+\times \mathbb N, \rho^{(0)})$ is denoted by $(Z,\rho^{(0)})$, and called the {\em neutral lookdown space}. The completion is done realization-wise; in this sense one should think of  $(Z,\rho^{(0)})$ as a random metric space. In slight abuse of notation, we refer by $(s, i)  \in \mathbb R_+ \times \mathbb N$ also
to the element of the metric space after the identification of elements
with distance zero and the completion, that is we also assume
$\mathbb R_+ \times \mathbb N \subset Z$ in this sense. The space $(Z,\rho^{(0)})$ describes the continuum of all individuals ever alive, together with their  distances in the neutral genealogy.\\

It is known (\cite{Gu1}, Thm 3.1) that there exists, on an event of probability 1 that does not depend on $s$, a family $(\mathfrak m_s)_{s>0}$ of probability measures on $Z$ such that
\begin{equation} \label{neutsampmeas}
\mathfrak m_s = \text{w-}\lim_{n\to \infty} \frac 1n \sum_{i=1}^n \delta_{(s,i)},
\qquad\mathfrak m_{s-} = \text{w-}\lim_{n\to \infty} \frac 1n \sum_{i=1}^n \delta_{(s-,i)},
\end{equation}
where $(s-,i):=\lim_{s'\uparrow s}(s',i)$ in $(Z,\rho^{(0)})$.
Here and below, $\text{w-}\lim$ denotes the weak limit of probability measures. In \eqref{neutsampmeas} the underlying topology 
on  $Z$ is provided by the metric $\rho^{(0)}$.
From~\cite[Thm 3.1]{Gu1}, it also follows that $s\mapsto \mathfrak m_s$ is a.\,s.\ continuous with respect to the weak topology on $(Z,\rho^{(0)})$.
The measures $\mathfrak m_s$ allow to ``sample uniformly'' from the population at time $s$, and will be called the (family of) {\em neutral sampling measures}.\\

\begin{definition} \label{defproper}a)
For a semi-ultrametric $r\in \R^{\N^2}$, we define $\mathbb T^r$ as the completion of (the set of levels) $\mathbb N$ with respect to~$r$. For $r\in\R^{\N^2}$ that is not a semi-ultrametric, we define $\mathbb T^r$ in an arbitrary way, for definiteness as $\mathbb T^r =\{1\}$.

b) Given a marked distance matrix $(r,g) \in \R^{\N^2}\times\mathbb I^\mathbb N$, we say that $(r,g)$ is {\em proper} if $r$ is a semi-ultrametric on $\mathbb N$ and
\begin{equation}\label{defmrg}
{\mathrm m}^{r,g} := \text{w-}\lim_{n\to \infty}\frac 1n \sum_{i=1}^n \delta_{(i,g(i))}
\end{equation}exists  on $\mathbb T^r\times \mathbb I$, endowed with the product topology. We will then refer to ${\mathrm m}^{r,g}$ as the {\em (marked) sampling measure} obtained from $(r,g)$.
If $(r,g)$ is not proper, we define ${\mathrm m}^{r,g}$ in an arbitrary manner, for definiteness as ${\mathrm m}^{r,g} := \delta_{(1,A)}$.

c) Let ${\mathrm m}^r =  \text{w-}\lim_{n\to \infty}\frac 1n \sum_{i=1}^n \delta_i$ denote the projection of ${\mathrm m}^{r,g}$ to $\mathbb{T}^r$. If, for a proper marked distance matrix $(r,g)$, there exists a measurable function $\bar g: \mathbb T^r\to \mathbb I$ such that
\begin{equation}\label{markf}
{\mathrm m}^{r,g}(d(\theta,h)) = {\mathrm m}^r(d\theta)\delta_{\bar g(\theta)}(dh),\qquad \theta \in \mathbb T^r, h \in \mathbb I,
\end{equation}
we will speak of $\bar g (\theta)$ as {\em the type carried by the individual $\theta$}. Note also that in this case, the type frequencies $\mu^g\{A\},\ \mu^g\{B\}$ correspond to the projection of ${\mathrm m}^{r,g}$ on the type component $h\in \mathbb{I}$.
\end{definition}

In Section~\ref{genealogy}, we will extend the concept of the neutral lookdown space  to our present setting  by constructing a {\em selective lookdown space}.  Here is a short preview. On each level $i$, selective births and competitive deaths can occur  only at the discrete time points given by $\mathcal K_i$.
This discreteness allows to dissect the neutral lookdown space into countably many {\em fragments}, {\em rooted} in those points  $(s,j)$ that carry atoms of $\mathcal K$ or belong to $\{0\} \times \mathbb N$. Each fragment consists of the completion of all the lineages descending from the ancestor $(s,j)$ until a selective birth or a competitive death affects them. Hence, each fragment is monotypic, inheriting the type of its root, whose type, in turn, is determined by $G$ from Theorem \ref{Gprop} (see Figure~\ref{fig:lookdownwithselectionandcompetition} for an illustration).

To describe all individuals ever alive by a {\em connected} metric space, we continue the ancestral lineages backwards in time until they hit a root, that is an atom of $\mathcal{K}$), say at time $s$. We say that a root is \textit{active} if the activation condition (i.e. the corresponding first condition of \eqref{fecdraw}) is fulfilled (otherwise, the selective fecundity event or the competition event proposed by $\mathcal{K}$ do not happen). If a competition event occurs at time $s$, then choose the parent individual, i.e. the individual  that continues the ancestral lineage backwards in time, according to  $\mathfrak m_s$.  If a fecundity event occurs at time $s$, the individual which reproduces is drawn according to $\mathfrak m_s$ conditioned on the fragments being of type~$A$. This yields, up to the time change given by the total mass process $\zeta$ as indicated in the next paragraph, a random metric space $(\hat Z,\rho)$, which is our selective lookdown space, see  Definition~\ref{fromrhotoR}.

This definition will also specify the genealogical distances $R_s$ as a function of the metric $\rho$ and the initial distances $R_0$. Here is an informal description: In the definition of $R_s(i_1,i_2)$,  $i_1,i_2\in \mathbb N$, we distinguish two cases. In case the
 ancestral lineages of $(s, i_1)$ and  $(s, i_2)$ meet between times $0$ and $s$, we transverse the geodesic from $(s,i)$ to $(s,j)$ in $(\hat Z,\rho)$ with speed $1/\zeta_{s-u}$ when passing through an ancestor that is time $u$ back from $s$. The duration one needs to pass through this geodesic is then equal to $R_s(i,j)$.  In case the
 ancestral lineages of $(s, i_1)$ and  $(s, i_2)$ do not meet between times $0$ and~$s$, then their levels at time~$0$ are different. Denoting these levels by $a_1$ and $a_2$, we then obtain $R_s(i_1, i_2)$, by adding the distance $R_0(a_1, a_2)$ to the sum of the durations to reach $(0,a_1)$ from $(s,i_1)$ and $(0,a_2)$ from~$(s,i_2)$.

\subsubsection{Updating the distance matrix at neutral and selective reproduction events}\label{updateR}

In this subsection we describe the updating rule of $(R,G)$ at the time  of a reproductive event, i.e. at a time $s$ at which the process $\mathcal L$ or the process $\mathcal K$ has an atom.
First we consider the neutral events.
When an atom of $\LL_{ij}$ ($i<j$) is encountered at time $s$, the individual at level $i$ puts a daughter at level $j$, pushing the levels previously above and including $j$ up by 1.
We define the corresponding update $\vartheta_{i,j}(r,g)=(\vartheta_{i,j}(r),\vartheta_{i,j}(g))$ of a marked distance matrix $(r,g)\in\R^{\N^2}\times\I^\N$  by putting for $i<j$ and $\ell \in \N$
\begin{align}\label{updateg}
\begin{split}
 (\vartheta_{i,j}(g))(\ell)=
\begin{cases}
g(\ell), &\ell<j,\\
g(i), &\ell=j,\\
g(\ell-1), &\ell>j,
\end{cases}
\end{split}
\end{align}
and by defining $(\vartheta_{i,j}(r))(\ell,m)$ as the function symmetric in $\ell, m \in \N$ such that $(\vartheta_{i,j}(r))(\ell,\ell)=0$ and for $\ell < m$
\begin{align}\label{updater}
\begin{split}
(\vartheta_{i,j}(r))(\ell,m)=
\begin{cases}
r(\ell,m),		&1\leq \ell<m<j,\\
r(\ell,i),		&1\leq \ell<j=m,\\
r(\ell,m-1),	&1\leq \ell<j<m,\\
r(i,m-1),	&\ell=j<m,\\
r(\ell-1,m-1),	&j<\ell<m.
\end{cases}
\end{split}
\end{align}
In a selective birth or a competitive death, the individual at some level $j \in \mathbb N$ is replaced by another individual  from the closure of the present population. Specifically, for a marked distance matrix $(r,g)\in\R^{\N^2}\times\I^\N$, let $\mathbb{T}$ be the completion  of $(\N,r)$ with respect to $r$, and let $\theta \in \mathbb{T}$, $h'\in \mathbb{I}$. Then the corresponding update ${\tilde\vartheta}_{j,\theta,h'}(r,g)=(\tilde\vartheta_{j,\theta}(r), \tilde\vartheta_{j,h'}(g))$ is done by putting
\begin{align}\label{selupdate}
(\tilde\vartheta_{j,h'}(g))(\ell)=
\begin{cases}
g(\ell) &\mbox{if }\ell \neq j,\\
h' &\mbox{if }\ell=j,
\end{cases}
\\\label{selupdate2}
(\tilde\vartheta_{j,\theta}(r))(\ell,m)=
\begin{cases}
r(\ell,m) 	&\mbox{if }\ell,m \neq j,\\
r(\ell,\theta) 		&\mbox{if }\ell\neq j, m=j,
\end{cases}
\end{align}
and $\tilde\vartheta_{j,\theta}(\ell,m)$ as symmetric in $\ell,m$, and $\tilde\vartheta_{j,\theta}(\ell,\ell)=0$.

With the rule just described, we can read off the jumps of the marked distance matrix process $(R,G)$ at those times $s$ which are charged by the Poisson point process $\mathcal K$. Let us explain how the changes are parameterized by the variables attached to the atom of $\mathcal{K}$ at time $s$.\\

If $\mathcal K_j$ has an atom in $(s,z,w,\delta)$, then we need to pick an individual from  ${\mathrm m}^{R_{s-}, G_{s-}}$ (where we remark that $(R_{s-},G_{s-})$ is a.\,s.\ proper, see Corollary~\ref{RGproper}).\\
Given $(r,g)\in\mathbb R^{\mathbb N^2}\times \mathbb I^{\mathbb N}$, the pick of an individual from $\mathrm  m^{r,g}$ can be obtained from a measurable mapping $w \mapsto (\theta^{r,g}(w), h^{r,g}(w))$ from $[0,1]$ to $\mathbb T^r\times \mathbb I$ (see Def. \ref{defproper}) which transports the uniform distribution on $[0,1]$ into $\mathrm  m^{r,g}$. We can now specify the $\kappa(\mu^g,w)$ appearing in \eqref{fecdraw} as
\begin{equation}\label{compcons}
\kappa(\mu^g,w)=h^{r,g}(w).
\end{equation}
Then:
\begin{align} \label{updatedelta}
(R_s,G_s) = \begin{cases} \tilde \vartheta_{j, \theta^{R_{s-},G_{s-}}(w), h^{R_{s-},G_{s-}}(w)}(R_{s-},G_{s-})   &\mbox{ if } z\leq c \, \zeta_{s-}^2\, \mu^{G_{s-}}\{A\} \mbox{ and } G_{s-}(j) = B\\
&\mbox{ or } z\leq c \, \zeta_{s-}^2\, \mu^{G_{s-}}\{B\} \mbox{ and } G_{s-}(j) = A,\\
(R_{s-},G_{s-}) & \mbox{otherwise.}
\end{cases}
\end{align}

If $\mathcal K_j$ has an atom in $(s,z,w,\beta)$, we need to sample an individual from $\mathrm  m^{r,g}(d\theta,dh\mid h= A)$. Let $w \mapsto (\tilde\theta^{r,g}(w), \tilde h^{r,g}(w))$ be a measurable mapping from $[0,1]$ to $\mathbb T^r\times \mathbb I$ which transports the uniform distribution on $[0,1]$ into $\mathrm  m^{r,g}(d\theta,dh\mid h= A)$. Notice that here, we necessarily have $\tilde h^{r,g}(w)=A$. Then,
\begin{align} \label{updatebeta}
(R_s,G_s) = \begin{cases} \tilde \vartheta_{j, \tilde\theta^{R_{s-},G_{s-}}(w), A}(R_{s-},G_{s-})   &\mbox{ if } z\leq b \, \zeta_{s-}\mu^{G_{s-}}\{A\}\\
(R_{s-},G_{s-}) & \mbox{otherwise}.
\end{cases}
\end{align}




In a nutshell, we can embed all the preceding updating rules into a single SDE:
\begin{multline}\label{eqRG}
R_s(i,j)= R_0(i,j)+
2\int_{[0,s]}\zeta_u dR_u(i,j)
+\sum_{1\leq k<\ell\leq j}
\int_{[0,s]} \big( \vartheta_{k,\ell}(R_{u-})(i,j)-R_{u-}(i,j)\big) d\mathcal L_{k\ell}(u)\\
\begin{aligned}+\sum_{k\in \{i,j\}} \int_{[0,s]} &\mathbf 1_{\{G_{u-}(k)=B, z\leq c\zeta_{u-}^2\mu^{G_{u-}}\{A\},\omega=\delta  \} }  \,\\
&\times\big( \tilde\vartheta_{k,\theta^{R_{u-},G_{u-}}(w),h^{R_{u-},G_{u-}}(w)}(R_{u-},G_{u-})(i,j) - R_{u-}(i,j)\big)
d\mathcal K_k(u,z,w,\,\omega)\\
+\sum_{k\in \{i,j\}} \int_{[0,s]}& \mathbf 1_{\{G_{u-}(k)=A, z\leq c\zeta_{u-}^2\mu^{G_{u-}}\{B\} ,\, \omega=\delta \} }  \,\\
&\times\big( \tilde\vartheta_{k,\theta^{R_{u-},G_{u-}}(w),h^{R_{u-},G_{u-}}(w)}(R_{u-},G_{u-})(i,j) - R_{u-}(i,j)\big)
d\mathcal K_k(u,z,w,\omega)\\
+\sum_{k\in \{i,j\}} \int_{[0,s]} &\mathbf 1_{\{z\leq b\zeta_{u-}\mu^{G_{u-}}\{A\},\,  \omega=\beta\} }  \,\\
&\times\big( \tilde\vartheta_{k,\tilde\theta^{R_{u-},G_{u-}}(w),\tilde h^{R_{u-},G_{u-}}(w)}(R_{u-},G_{u-})(i,j) - R_{u-}(i,j)\big)
d\mathcal K_k(u,z,w,\omega),
\end{aligned}
\end{multline}
where we write $(r',g')(i,j):=r'(i,j)$. In the light of the above constructions, the following result is now an immediate consequence of Theorem \ref{Gprop}.
\begin{theorem}\label{zetaRG}
Let $(R_0,G_0)$ be distributed according to the marked distance matrix distribution of some $\I$-marked metric measure space. Then the system  \eqref{totmassLD}, \eqref{eqG}, \eqref{eqRG} of SDE's  has a unique strong solution  $(\zeta_s, R_s, G_s)_{s\in[0,\sigma)}$ up to the stopping time $\sigma$ defined in~\eqref{e:def-sigma}, and this process $(\zeta, R, G)$ is Markovian.
\end{theorem}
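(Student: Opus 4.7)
The plan is to exploit the decoupled structure of the system. First, I apply Theorem~\ref{Gprop} to obtain the unique strong solution $(\zeta_s, G_s)_{s\in[0,\sigma)}$ of the subsystem \eqref{totmassLD}--\eqref{eqG}, together with the a.s.-existence of the type frequencies $\mu^{G_s}\{A\}$ for every $s \in [0,\sigma)$. This reduces the task to constructing $R$ pathwise from \eqref{eqRG}, conditionally on $(\zeta, G)$ and the driving Poisson point processes $(\mathcal L, \mathcal K)$.

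Next, I argue pair by pair. For any fixed $i,j \in \mathbb N$, the right-hand side of \eqref{eqRG} involves only atoms of $\mathcal L_{k\ell}$ with $1 \le k < \ell \le \max(i,j)$ and of $\mathcal K_i, \mathcal K_j$. On each compact subinterval of $[0,\sigma)$, $\zeta$ is continuous and hence bounded, so the acceptance thresholds $b\, \mu^{G_{u-}}\{A\}\,\zeta_{u-}$ and $c\,\mu^{G_{u-}}\{A\}\mu^{G_{u-}}\{B\}\,\zeta_{u-}^2$ appearing in the $\mathcal K$-integrands are bounded as well. Consequently, only finitely many atoms effectively trigger an update of $R_s(i,j)$ on such an interval, and between triggering times $R_s(i,j)$ evolves continuously at the deterministic rate $2\zeta_s$. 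At a triggering atom, $R_s(i,j)$ is updated via the deterministic rules \eqref{updater} or \eqref{selupdate2}, using the sampling mappings $\theta^{R_{u-},G_{u-}}, h^{R_{u-},G_{u-}}$ or $\tilde\theta^{R_{u-},G_{u-}}, \tilde h^{R_{u-},G_{u-}}$ fed with the uniform mark $w$ of the atom. This yields an explicit pathwise recursion that produces the unique strong solution for each $R_s(i,j)$ and, taken over all pairs, for $R$.

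The main technical obstacle is to verify that these sampling mappings are well-defined at every jump time $s$ of $\mathcal K$, i.e.\ that $(R_{s-},G_{s-})$ is proper in the sense of Definition~\ref{defproper}(b), so that the sampling measure $\mathrm m^{R_{s-},G_{s-}}$ exists. This is precisely the forecast Corollary~\ref{RGproper}. Its verification rests on the construction of the selective lookdown space $(\hat Z,\rho)$ sketched in Section~\ref{sec:pathwiseconstructionofgenealogies} and carried out in Section~\ref{genealogy}: the selective lookdown space inherits sampling measures from the neutral lookdown space $(Z,\rho^{(0)})$, where a.s.-existence of the continuous family $(\mathfrak m_s)$ is provided by~\eqref{neutsampmeas}, via the fragment decomposition of Section~\ref{RaF}. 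Combined with the local finiteness, at any finite level, of the effective jump times of $\mathcal K$, this yields a.s.\ properness of $(R_{s-}, G_{s-})$ at every such jump time, so that the pathwise recursion above is well-defined up to $\sigma$.

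Finally, the Markov property of $(\zeta,R,G)$ follows from the strong-solution structure: the state after any time $s$ is a measurable functional of $(\zeta_s, R_s, G_s)$ and of the restrictions of the driving noise $(\mathcal W, \mathcal L, \mathcal K)$ to $[s,\infty)$, which are independent of the filtration generated up to time~$s$.
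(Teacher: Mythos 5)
Your proposal is correct and follows essentially the same route as the paper: Theorem~\ref{Gprop} supplies $(\zeta,G)$, the distance matrix is then filled in pathwise with deterministic updates at the (locally finite, after truncating the activation levels by the bound on $\zeta$) Poisson atoms, and the one substantive point --- a.s.\ properness of $(R_{s-},G_{s-})$ so that the sampling picks are well defined --- is settled exactly as in the paper via the fragment decomposition and the selective lookdown space (Lemma~\ref{l:5-1}, Corollary~\ref{RGproper}). The only difference is presentational: the paper first defines $R$ globally from the metric $\rho$ on $(\hat Z,\rho)$ in Definition~\ref{fromrhotoR} and then couples the sampling picks to show that this process solves \eqref{eqRG}, whereas you run the recursion forward from the SDE and invoke the lookdown space only to justify the sampling measures.
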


\subsubsection{A well-posed martingale problem for the evolving lookdown genealogy}
\label{sec:martingalepbzetaGLambda}
Let $(\mathcal W, \mathcal L, \mathcal K)$ be as in Sec. \ref{sec:pathwiseconstructionLD}, and let $(\zeta,R,G)$ be the process provided by Theorem \eqref{zetaRG}. The process $\zeta$ can touch zero or explode in finite time; this happens on the event $\sigma < \infty$ with $\sigma$ being defined in \eqref{e:def-sigma}. The time $\sigma$ is announced by the following sequence of stopping times $\sigma_M$, $M\in \mathbb N$:
 \begin{equation}\label{sigmaM}
\sigma_M:=\inf\big\{s\geq 0,\ \zeta_s\notin (1/M,M)\big\}.
\end{equation}
With $b$ and $c$ being the parameters that appear in \eqref{twotype}, we set
\begin{align}\label{CM1}
C_M:= (b\vee c)M^2.
\end{align}
 Let us now introduce the state space for the process $(\zeta,R,G)$ stopped at $\sigma_M$. We define:
 \begin{align}\label{EMlambda1}
E_M := \left( \left(\tfrac 1M, M\right)\times \mathbb R^{\mathbb N^2}\times  \mathbb I^\mathbb N\right) \cup \{\Delta_M\}
\end{align}
where  $\left(\tfrac 1M, M\right)\times  \mathbb R^{\mathbb N^2} \times \mathbb I^\mathbb N$ is equipped with the product topology and where $\Delta_M$ is a cemetery point such that a sequence $(v_n,r_n,g_n)$ of $E_M$ is said to converge to $\Delta_M$ if either $v_n \to \frac 1M$ or $v_n \to M$ as $n \to \infty$.\\

Next we display the generator of $(\zeta, R, G)$ restricted to appropriate test functions $F$.
For $n\in\N$, let $\rho_n:\R_+\times\R^{\N^2}\to\R_+\times\R^{n^2}$, $(v,r)\mapsto (v,(r(i,j))_{1\leq i,j\leq n})$, be the restriction map.
We define $D_{1,M}$ as the set of those functions $f:(\frac 1M,M)\times\mathbb R^{\N^2} \to \R$ for which there exists an $n\in\N$,  a compact set $C \subset (\frac 1M,M)$, and a bounded infinitely differentiable function
$\phi :(\frac 1M,M)\times \R^{\N^2} \to \R$, all whose derivatives are bounded, 
such that   $\phi(v, r) = 0$ unless $v\in C$, and $f=\phi\circ\rho_n$.

Let $D_{2}$ be the set of those functions $\gamma: \mathbb I^{\mathbb N}\to \mathbb R$ for which there exists an $n \in \mathbb N$ such that $\gamma(g)$ depends only on the first $n$ coordinates of $g$.

We now consider functions $F:E_M\to\R$ of the form
\begin{align} \label{firstF1}
	F(v,r,g) =f(v,r) \gamma(g)
\end{align}
for $(v,r,g)\in E_M$ with $v\in(1/M,M)$,
where $f \in D_{1,M}$, $\gamma \in D_2$. Here we also assume that $F$ is continuous in $\Delta_M$. The smallest possible $n \in \mathbb N$ which fits to the required representations of $f$ and $\gamma$ will be called the {\em degree} of $F$. We write $F_{r(i,j)}$ for the partial derivative of $F$ with respect to the variable $r(i,j)$, and $F_{v}$ for partial derivatives of $F$ with respect to $v$.

Let $\vartheta_{i,j}$ and $\tilde \vartheta_{j,\theta, h'}$ be as in \eqref{updateg}, \eqref{updater},  \eqref{selupdate} and  \eqref{selupdate2}.
For a pair $(r,g)$, let ${\rm m}^{r,g}$ be the marked sampling measure  as in \eqref{defmrg}, and $\mu^g$ be the second marginal of ${\rm m}^{r,g}$, which is equal to the type distribution belonging to $g$.
Let $F$ be as in  \eqref{firstF1} with degree $n$ and $(v,r,g)\in E_M$ with $v\in (1/M,M)$. Then we define $\mathbf AF$  as follows:
     \begin{align}
     \begin{split}
\label{genX}
\mathbf AF(v,r,g)&=\frac{v^2}{2}F_{vv}(v,r,g)
+\big(bv^2\mu^g\{A\}-2cv^3\mu^g\{A\}\mu^g\{B\}\big)F_v(v,r,g)\\
&+2v\sum_{1\leq i\neq j\leq n}F_{r(i,j)}(v,r,g)\\
&+\sum_{1\leq i<j\leq n}(F(v,\vartheta_{i,j}(r,g))-F(v,r,g))\\
&+cv^2\mu^g\{A\}\sum_{j=1}^n\int_{\mathbb{T}^r\times \mathbb I} {\mathrm m}^{r,g}(d\theta,dh')
\mathbf 1_{\{g(j)=B \} }
(F(v,{\tilde\vartheta}_{j,\theta,h'}(r,g))-F(v,r,g))\\
&+cv^2\mu^g\{B\}\sum_{j=1}^n\int_{\mathbb{T}^r\times \mathbb I} {\mathrm m}^{r,g}(d\theta,dh')
\mathbf 1_{\{g(j)=A \} }
(F(v,{\tilde\vartheta}_{j,\theta,h'}(r,g))-F(v,r,g))\\
&+bv\sum_{j=1}^n\int_{\mathbb{T}^r\times \mathbb I} {\mathrm m}^{r,g}(d\theta,dh') \mathbf 1_{\{h'=A\}} (F(v,{\tilde\vartheta}_{j, \theta,h'}(r,g))-F(v,r,g)),
\end{split}
\end{align}
and we set $\mathbf A F(\Delta_M)=0$.
Let $D_M$ be the linear span of the constant real-valued functions on $E_M$ (defined in \eqref{EMlambda1}) and all functions of the form  \eqref{firstF1}. The linear extension of \eqref{genX}  to $D_M$ will again be denoted by~$\mathbf A$.
\begin{proposition}\label{MPA} For all $M > 0$, the
 process $(\zeta_{s\wedge \sigma_M}, R_{s\wedge \sigma_M},G_{s\wedge \sigma_M})_{s \ge 0}$  solves the martingale problem $(\mathbf A, D_M)$, and this martingale problem is well-posed.
\end{proposition}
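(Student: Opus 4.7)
The plan is to apply Itô's formula for càdlàg semimartingales to $F(\zeta_s,R_s,G_s)$ with $F\in D_M$, using the unique strong solution of the SDE system \eqref{totmassLD}, \eqref{eqG}, \eqref{eqRG} supplied by Theorem \ref{zetaRG}. The Brownian part of $\zeta$ yields $\frac{v^2}{2}F_{vv}+(bv^2\mu^g\{A\}-2cv^3\mu^g\{A\}\mu^g\{B\})F_v$; the deterministic drift $2\zeta_u\,du$ carried by each $R_s(i,j)$ between reproductive events (reflecting the time change \eqref{tch}) contributes $2v\sum_{1\le i\ne j\le n}F_{r(i,j)}$, the sum being finite since $F$ has degree $n$. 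The neutral driver $\mathcal L_{k\ell}$ fires at unit rate and contributes $F(v,\vartheta_{k,\ell}(r,g))-F(v,r,g)$; only $k<\ell\le n$ alter $F$, giving the third line of \eqref{genX}. Each $\mathcal K_j$ is integrated against its Lebesgue compensator on $\mathbb R_+\times[0,1]$, restricted to the activation rectangles of \eqref{fecdraw}; since $w\mapsto(\theta^{r,g}(w),h^{r,g}(w))$ transports Lebesgue measure on $[0,1]$ to $\mathrm m^{r,g}$ (and the fecundity variant to $\mathrm m^{r,g}(\,\cdot\,\mid h=A)$), the last three lines of \eqref{genX} emerge after multiplication by the $z$-interval lengths $cv^2\mu^g\{A\}$, $cv^2\mu^g\{B\}$, $bv\mu^g\{A\}$. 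Because $F$ vanishes outside a compact set of $v$-values and $\sigma_M$ keeps $\zeta\in(1/M,M)$, all sums are truncated and the compensator is bounded by a constant depending on $n$, $M$ and $C_M$ from \eqref{CM1}, so the local martingale is a genuine martingale.

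\textbf{Uniqueness.} For any càdlàg solution $(\tilde\zeta,\tilde R,\tilde G)$ of $(\mathbf A,D_M)$, I would reconstruct the three noise sources of Theorem \ref{zetaRG} and invoke its pathwise uniqueness. Test functions of the form $F(v,r,g)=\varphi(v)$ identify $\langle\tilde\zeta\rangle_s=\int_0^s\tilde\zeta_u^2\,du$ and hence extract, by Lévy's characterization, a standard Brownian motion $\tilde{\mathcal W}$ from the continuous local-martingale part of $\tilde\zeta$. Test functions depending on only finitely many coordinates of $r$ and $g$ identify the predictable compensator of the jump measure of $(\tilde R,\tilde G)$, hence its law; after enlarging the filtration by independent uniforms that encode the positions of the \emph{rejected} potential $\mathcal K$-events (whose conditional law given the process is explicit from \eqref{fecdraw}), one reconstructs Poisson point measures $\tilde{\mathcal L},\tilde{\mathcal K}$ with the intensities of Section \ref{sec:pathwiseconstructionLD}. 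The triple $(\tilde\zeta,\tilde R,\tilde G)$ then satisfies the SDE system of Theorem \ref{zetaRG} driven by $(\tilde{\mathcal W},\tilde{\mathcal L},\tilde{\mathcal K})$, whose strong solution is unique; I expect this step to be packaged as a single application of Kurtz' Markov mapping theorem, with the natural projection from the noise-augmented Markov process down to $(\zeta,R,G)$.

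\textbf{Main obstacle.} The delicate step is the uniqueness argument, because the sampling measures $\mathrm m^{R_{s-},G_{s-}}$ enter the generator nonlinearly through the random metric structure on $\mathbb T^{R_{s-}}$, and because the inactive $\mathcal K$-atoms leave no observable fingerprint on $(\zeta,R,G)$. Verifying the Markov mapping hypotheses thus rests on (i) Corollary \ref{RGproper}, which ensures that $\mathrm m^{R_{s-},G_{s-}}$ is a.s.\ defined along the solution so that $\mathbf A F$ is meaningful, and (ii) a careful check that the observed jump compensator of $(R,G)$, combined with the explicit acceptance rates of \eqref{fecdraw}, pins down the conditional law of the driver $\mathcal K$ up to the independent auxiliary randomization used for the rejected events. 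Localization at $\sigma_M$, which bounds every generator coefficient in terms of $M$ and $C_M$ from \eqref{CM1}, removes integrability issues and is what allows one to work with a bona fide martingale problem rather than a merely local one.
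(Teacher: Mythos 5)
Your existence argument coincides with the paper's: It\^o's formula applied to the strong solution from Theorem \ref{zetaRG}, with the stopping at $\sigma_M$ turning the local martingale into a true one. The genuine gap is in your uniqueness step. You propose to start from an arbitrary solution of $(\mathbf A, D_M)$, identify the jump compensator of $(\tilde R,\tilde G)$, and then ``enlarge the filtration by independent uniforms'' so as to rebuild $\tilde{\mathcal L},\tilde{\mathcal K}$ and invoke pathwise uniqueness. This cannot be carried out in that order: from the coarse process the rejected atoms of $\mathcal K$ are invisible, and even at an accepted atom the marks $(z,w)$ are not determined by the observed jump ($z$ only through the event that it lies below the activation threshold, and $w$ only through the non-injective transport map $w\mapsto(\theta^{r,g}(w),h^{r,g}(w))$). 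After adjoining auxiliary randomness one would still have to prove both that the resulting triple of drivers has the prescribed Poisson law jointly with $(\tilde\zeta,\tilde R,\tilde G)$ \emph{and} that $(\tilde\zeta,\tilde R,\tilde G)$ solves the SDE system driven by exactly these drivers, with the observed jumps coupled to the reconstructed seeds $w$ --- which is circular without further input. Kurtz' Markov mapping theorem is not a packaging of this reconstruction; it replaces it, and its hypotheses have to be verified in the opposite direction.

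The paper's route (Proposition \ref{MPAlambda}) is to first enrich the state by a stationary $\{-1,+1\}^{\mathscr H_M}$-valued parity process $\Lambda^M$ that flips sign at every atom of each $\mathcal L_{ij}$ and at every atom of $\mathcal K_k$ falling in each set of a countable generating algebra $\mathscr V_M$. In the enriched process the drivers \emph{are} observable, so uniqueness of the enriched martingale problem $(\widehat{\mathbf A},\widehat D_M)$ genuinely reduces to the pathwise uniqueness of Theorem \ref{zetaRG}; this is where your reconstruction idea is legitimately executed. The projection down to $(\zeta,R,G)$ then uses Corollary 3.5 of \cite{Kurtz98} with the kernel $\alpha((v,r,g),\cdot)$ equal to the i.i.d.\ uniform law on $\{-1,+1\}^{\mathscr H_M}$ --- a fixed product measure independent of the state, which is what makes the intertwining of $\mathbf A$ with $\widehat{\mathbf A}$ immediate for test functions with $\varphi\equiv 1$ --- and requires verifying the forward-equation hypothesis via an auxiliary generator $A^0$ in a random environment and a transition kernel $\eta$, together with separability and pre-generator conditions. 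Your sketch gestures at ``a careful check'' but omits this verification, which is the substantive technical content of the uniqueness proof.
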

The proof of Proposition \ref{MPA}  will be given in Section \ref{twomp}. This proof will heavily rely on Theorem~\ref{zetaRG} but the uniqueness part will need additional arguments.
The first one of these (Proposition \ref{MPAlambda}) will be to establish a well-posed martingale problem for a refinement $(\zeta, R, G, \Lambda)$, where $\Lambda$ counts the points of $\mathcal L$ and $\mathcal K$, and thus keeps track of all the essential graphical ingredients that are needed to specify the jump distribution of $R$ at these points. The second step will complete the proof of Proposition \ref{MPA} by applying Kurtz' Markov Mapping Theorem, thus projecting to a well-posed martingale problem for the first three components  $(\zeta, R, G)$. Let us also mention that a similar strategy has been applied in Lemma~4.2 in \cite{MR1728556} in a situation without the components $\zeta$ and $R$, i.e. for a dynamics with constant population size and without consideration of the genealogies.

\subsection{A well-posed martingale problem for the evolving  symmetrized genealogy}\label{mppopsizeandgenealogy}
Let $(\zeta_s, R_s, G_s)$, $s<\sigma$, be the process provided by Theorem \ref{zetaRG}. In Corollary \ref{RGproper} we will prove that a.s.
$(R_s, G_s)$ is proper in the sense of Definition \ref{defproper}.   From this, we define a process of marked genealogies (see Sec. \ref{SecGen}) whose state at each time $s$ is the isomorphy class of the marked ultrametric  measure space $(\mathbb{T}^{R_s},R_s, \mathrm{m}^{R_s,G_s})$ (again see  Definition \ref{defproper}). Recalling the notation $X_s =  (R_s,G_s)$, we denote this isomorphy class by~$\psi(X_s)$.\\
For $\chi \in \mathbb M$, the space of marked genealogies (see Sec. \ref{SecGen}), we will write $\nu^\chi$ for the {\em marked distance matrix distribution} obtained from $\chi$, i.e. the distribution of $(R^\chi,G^\chi)$ where $R^\chi$ is the distance matrix and $G^\chi$ is the type configuration of a sequence drawn i.i.d. from the sampling measure belonging to (an arbitrary representative of)~$\chi$.
For a prescribed initial condition $(v_0, \chi_0) \in (0,\infty)\times \mathbb M$, we define
$X_0:=(R^{\chi_0},G^{\chi_0})$ and take $(v_0, X_0)$  as  initial condition for the process $(\zeta, X)$. Let the time change $t(s)= \int_0^s \zeta_u\, du$, $s< \sigma$, be as in \eqref{inversetch}, and let $t \mapsto s(t)$ be its inverse. For $t$ such that $s(t)<\sigma$ we define
\begin{align}\label{def:psi}
(\xi_t, Y_t):=  (\zeta_{s(t)},\psi (X_{s(t)})).
\end{align}
We now set out to describe the process $(\xi,Y)$ by a stopped martingale problem. That $\xi$ can reach zero or converge to infinity may be problematic for the change of time \eqref{tch}. That is why it is natural to introduce, for a fixed positive integer $M>0$, the stopping time
\begin{equation}
\label{eq:tauM}
\tau_M=\inf\{t\in\R_+, \xi_t\notin(1/M,M)\}
\end{equation}
and the stopped processes $\xi^{\tau_M}=\xi_{\cdot\wedge\tau_M}$ and $Y^{\tau_M}=Y_{\cdot\wedge\tau_M}$.
Let us also define $\tau_0 := \lim_{M\to\infty}\tau_M$, and
choose $M$ depending on the initial condition $v_0$ of the mass process so large that $v_0\in \left(\frac 1M, M\right)$.

 For $F\in D_M$ (defined just after \eqref{genX}), $v > 0$ and  $\chi \in \mathbb M$ we put
 \begin{equation}
 \label{eq:PhiF_APhiF}
 \Phi_F(v,\chi):= \int F(v,r,g) \nu^\chi(dr,dg), \qquad \mathbb A\Phi_F(v,\chi) :=  \int \frac 1v \mathbf A F(v,r,g) \nu^\chi(dr,dg).
 \end{equation}

In analogy to \eqref{EMlambda1} we now consider the state space
\begin{equation}\label{DefSM}
S_M := \left(\left(\tfrac 1M,M\right) \times \mathbb M\right) \cup \left\{\Delta_M\right\},
\end{equation}
where $\left(\tfrac 1M,M\right) \times \mathbb M$ is equipped with the product topology and a sequence $(v_n, \chi_n)$ is said to converge to $\Delta_M$ if either $v_n \to \frac 1M$ or $v_n\to M$ as $n\to \infty$. In other words, this corresponds to a ``lumping'' of all states $(\frac 1M,\chi)$ and $(M,\chi)$ with $\chi\in\M$ into one state $\Delta_M$.

\begin{theorem}\label{martprobth}
For $(v_0, \chi_0)\in (0,\infty)\times \mathbb M$ and $M > 1/v_0$,
the process $(\xi^{\tau_M},Y^{\tau_M})$ is Markovian and gives the unique solution of the martingale problem
\begin{align}\label{martprobY}
(\xi_0,Y_0)=(v_0, \chi_0), \qquad
\Phi_F(\xi^{\tau_M}_t,Y^{\tau_M}_t) -\int_0^{t\wedge{\tau_M}}  \mathbb A\Phi_F(\xi_u, Y_u)\, du = \text{ martingale},  \qquad    F \in D_M.
 \end{align}
 \end{theorem}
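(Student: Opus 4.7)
The strategy is to deduce the theorem from Proposition~\ref{MPA} via a random time change followed by a single application of Kurtz' Markov mapping theorem that projects the embedded labelling $(R,G)$ onto its marked isomorphy class $\psi(R,G)$.

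First I would carry out the random time change $s\mapsto t(s)=\int_0^s\zeta_u\,du$. On the event $\{s<\sigma_M\}$, this map is an absolutely continuous strictly increasing bijection onto $[0,\tau_M)$, since $\zeta_u\in(1/M,M)$. Setting $\tilde\zeta_t=\zeta_{s(t)}$, $\tilde R_t=R_{s(t)}$, $\tilde G_t=G_{s(t)}$, the standard random time change for Markov processes (\cite[Ch.~6]{ethierkurtz}) yields that $(\tilde\zeta,\tilde R,\tilde G)$ stopped at $\tau_M$ solves the martingale problem on $D_M$ with generator $\tfrac{1}{v}\mathbf A$, and this martingale problem is well-posed because $1/v$ is bounded and bounded away from zero on $(1/M,M)$.

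Next I would invoke Kurtz' Markov mapping theorem \cite{Kurtz98} with the projection $\gamma:E_M\to S_M$ defined by $\gamma(v,r,g):=(v,\psi(r,g))$ (and $\gamma(\Delta_M)=\Delta_M$), and the reversal kernel $\alpha((v,\chi),\cdot):=\delta_v\otimes\nu^\chi$. Since $\psi(r,g)=\chi$ for $\nu^\chi$-a.e.\ $(r,g)$ by the Gromov--Vershik representation recalled in Sec.~\ref{section:LDrepresentation}, $\alpha((v,\chi),\cdot)$ is concentrated on the fiber $\gamma^{-1}((v,\chi))$. The identity that drives the projection is
\begin{equation*}
	\int_{E_M}\tfrac{1}{v}\mathbf AF(v,r,g)\,\nu^\chi(dr,dg)\;=\;\mathbb A\Phi_F(v,\chi)\qquad\text{for }F\text{ of the form~\eqref{firstF1},}
\end{equation*}
which holds by the very definition of $\mathbb A\Phi_F$ in~\eqref{eq:PhiF_APhiF}. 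What requires a genuine check is that the left-hand side depends only on $(v,\chi)$ and not on the particular representative $(r,g)$ of the fiber: this follows termwise from~\eqref{genX} using the exchangeability of $\nu^\chi$, the $\nu^\chi$-a.s.\ equality of $\mu^g$ with the type marginal of $\chi$, and the fact that the marked isomorphy classes produced by the update maps $\vartheta_{i,j}$ and $\tilde\vartheta_{j,\theta,h'}$ depend only on $\chi$ (and on $(\theta,h')$, which is integrated against $\mathrm m^{r,g}$). Kurtz' theorem then transfers well-posedness and the Markov property from $(\tilde\zeta,\tilde R,\tilde G)^{\tau_M}$ down to $(\xi^{\tau_M},Y^{\tau_M})$, yielding~\eqref{martprobY}; uniqueness is automatic, because any solution of~\eqref{martprobY} can be lifted through $\alpha$ to a solution of the well-posed martingale problem for $(\tilde\zeta,\tilde R,\tilde G)^{\tau_M}$.

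The main obstacle is the clean verification of the averaging identity for the integral terms in $\mathbf A F$, where one must apply Fubini to the product of $\nu^\chi(dr,dg)$ and $\mathrm m^{r,g}(d\theta,dh')$ and exploit that, $\nu^\chi$-a.s., $\mathrm m^{r,g}$ is the pullback to $\mathbb T^r$ of the marked sampling measure of $\chi$. Secondary technicalities concern the hypotheses of Kurtz' theorem at the cemetery $\Delta_M$: continuity of $\Phi_F$ on $S_M$ and boundedness of $\mathbf A F$ on $E_M$, both of which follow from the compact support in $v$ built into $D_{1,M}$ and from the fact that the integrand defining $\Phi_F$ vanishes outside that support, together with the uniform bound $C_M$ from~\eqref{CM1} on the interaction rates.
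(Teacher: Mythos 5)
Your overall strategy --- combine the random time change with Kurtz' Markov mapping theorem, using $\gamma(v,r,g)=(v,\psi(r,g))$ and $\alpha((v,\chi),\cdot)=\delta_v\otimes\nu^\chi$ --- is the same as the paper's; you merely perform the two operations in the opposite order (time change first, projection second, versus the paper's projection in the lookdown timescale followed by the time-change relation $\widetilde{\mathbb A}=v\,\mathbb A$ from \cite[Theorem~6.1.3]{ethierkurtz}). That reordering is harmless. But there is a genuine gap in the existence half of your argument. Kurtz' Corollary~3.5 lifts a given solution of the \emph{coarse} martingale problem to a solution of the fine one and transfers \emph{uniqueness} downward; it does \emph{not} assert that $\gamma$ applied to a solution of the fine problem solves the coarse problem. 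That implication holds only when the kernel $\alpha$ is compatible with the dynamics, i.e.\ when the conditional law of $(R_s,G_s)$ given the path $(\zeta_u,\psi(R_u,G_u))_{u\le s}$ is exactly $\nu^{\psi(R_s,G_s)}$. This is precisely what must be verified by hand, and it is where the paper's Step~1 does its real work: the identities \eqref{e:PhiF-F} and \eqref{e:phiAF-AF} are derived from the Markov property of $(\zeta,R,G)$ together with the conditional exchangeability of $(R_s,G_s)$ given $(\zeta_u)_{u\le s}$ (Proposition~\ref{propexch}, itself proved by induction over the jump times) and the a.s.\ properness of $(R_s,G_s)$ (Corollary~\ref{RGproper}), via a Gromov--Vershik/de~Finetti argument as in Proposition~10.3 of~\cite{Gu2}. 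Your ``averaging identity'' addresses only the well-definedness of $\mathbb A\Phi_F$ as a function of $(v,\chi)$ --- which is automatic, since it is defined as an integral against $\nu^\chi$ --- and does not substitute for this process-level statement. Without it, ``yielding \eqref{martprobY}'' does not follow.

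Two smaller points. First, Corollary~3.5 of \cite{Kurtz98} carries a hypothesis on the associated forward equation; the paper verifies it through Theorem~2.9~d) of \cite{Kurtz98} by introducing an auxiliary operator $A^0$ in an enlarged environment space and a transition kernel $\eta$ with $\mathbf A F(\mathbf x)=\int\eta(\mathbf x,d\mathbf y)A^0F(\mathbf y)$. Your list of ``secondary technicalities'' (continuity at $\Delta_M$, boundedness of $\mathbf A F$) does not cover this, and it is not cosmetic. Second, your reconstruction claim $\psi(r,g)=\chi$ for $\nu^\chi$-a.e.\ $(r,g)$, hence $\gamma(\alpha((v,\chi),\cdot))=\delta_{(v,\chi)}$, is correct and matches the paper's appeal to Proposition~10.5 of~\cite{Gu2}.
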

Theorem \ref{martprobth} will be proven in Section \ref{symmgen}.
\begin{remark} Since $\tau_M \uparrow \tau_0$ a.s. as $M\to \infty$, the process $(\xi_t, Y_t)_{t<\tau_0}$ is characterized in distribution by the requirement that, when stopped at $\tau_M$, it solves the martingale problem \eqref{martprobY} for all $M\in \mathbb N$.
\end{remark}

In contrast to $(X_s)_{s<\sigma}$, which has jumps, the process $(Y_t)_{t<\tau_0}$ is continuous. This is contained in the next result, which will be proved in Section~\ref{symmgen}.  
 \begin{proposition}
 \label{propcont}
 For $(v_0,\chi_0)\in(0,\infty)\times\mathbb M$ and $M\in\mathbb N$, the process $(\xi_{t\wedge \tau_M},Y_{t\wedge \tau_M})_{t\geq 0}$ has a.s. continuous paths in $\mathbb R_+\times\mathbb M$.
 \end{proposition}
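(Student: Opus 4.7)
The plan is to establish continuity separately for $\xi$ and $Y$, using that the only possible jump times of $X=(R,G)$ are atoms of $\mathcal L$ and $\mathcal K$, and showing that such atoms are invisible after the symmetrization $\psi$. Throughout, the stopping at $\tau_M$ forces $\xi\in[1/M,M]$, so that the time change $t\mapsto s(t)$, inverse of $t(s)=\int_0^s \zeta_u\,du$, is a bi-continuous bijection on the relevant time intervals.

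Continuity of $\xi_{\cdot\wedge\tau_M}$ follows immediately from Theorem \ref{Gprop}: $s\mapsto\zeta_s$ is a.s.\ continuous on $[0,\sigma)$, and since $\zeta_s>0$ for $s<\sigma$ the map $s\mapsto t(s)$ is continuous and strictly increasing, so its inverse $s(t)$ is continuous and $\xi_t=\zeta_{s(t)}$ is continuous. At $t=\tau_M$, $\xi_{\tau_M}\in\{1/M,M\}$ by continuity, which matches the identification $(\xi_t,Y_t)\to\Delta_M$ in the topology on $S_M$ from \eqref{DefSM}.

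For $Y$, by continuity of $s(\cdot)$ it suffices to prove a.s.\ continuity of $s\mapsto\psi(X_s)$ on $[0,\sigma)$. Between consecutive atoms of $\mathcal L\cup\mathcal K$, $G_s$ is constant and $R_s(i,j)=R_{s_0}(i,j)+2(t(s)-t(s_0))$ for $i\neq j$, with $s_0$ the last jump time (using the interpretation of the first integral in \eqref{eqRG} via \eqref{inversetch}). The marked Gromov-weak topology is generated by the polynomials $\chi\mapsto\int F\,d\nu^\chi$ with $F$ bounded continuous and depending on finitely many coordinates, and for $\chi=\psi(X_s)$ such an integral coincides with $\lim_n n^{-k}\sum_{i_1,\dots,i_k=1}^n F\bigl((R_s(i_p,i_q))_{p,q\le k},(G_s(i_p))_{p\le k}\bigr)$. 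Since $F$ is continuous in its finitely many distance arguments and $R_s(i,j)$ is continuous in $s$ between jumps, this expression is continuous in $s$.

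The main work lies in the pathwise identity $\psi(X_s)=\psi(X_{s-})$ at each atom of $\mathcal L$ or $\mathcal K$. For an atom of $\mathcal L_{ij}$, rules \eqref{updateg}--\eqref{updater} give $R_s(\ell,m)=R_{s-}(\pi(\ell),\pi(m))$ and $G_s(\ell)=G_{s-}(\pi(\ell))$ for the injection $\pi(k)=k$ $(k<j)$, $\pi(j)=i$, $\pi(k)=k-1$ $(k>j)$; for an atom of $\mathcal K_j$, the analogous injection is the identity on $\mathbb N\setminus\{j\}$ together with $j\mapsto\theta^{R_{s-},G_{s-}}(w)$ (or $\tilde\theta^{R_{s-},G_{s-}}(w)$), which is distance-preserving by \eqref{selupdate2}. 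In both cases I would directly verify the equality of marked distance matrix distributions $\nu^{\psi(X_s)}=\nu^{\psi(X_{s-})}$ by expressing each as the $n\to\infty$ limit of empirical $k$-sample averages over the first $n$ levels: since the injection differs from the identity on at most one level, the two averages differ by $O(1/n)$ for any fixed $k$ and bounded $F$, so the limits coincide. By the characterization of $\mathbb M$ through its marked distance matrix distributions, this yields $\psi(X_s)=\psi(X_{s-})$. The delicate point I anticipate is that, while the perturbation is small for each finite $k$ and $F$, one must confirm it vanishes on a determining class for the Gromov-weak topology; this is routine because that class consists of polynomials of finite degree. An alternative, more geometric route is to lift the injection $\pi$ to an isometric embedding $\tilde\pi:(\mathbb T^{R_s},R_s)\hookrightarrow(\mathbb T^{R_{s-}},R_{s-})$ and check that it sends $\mathrm m^{R_s,G_s}$ to $\mathrm m^{R_{s-},G_{s-}}$; this works but requires additional bookkeeping on the completions, and I would only pursue it as a backup.
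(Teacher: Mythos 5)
Your step (2) — the verification that the symmetrization is invariant under a single update $\vartheta_{i,j}$ or $\tilde\vartheta_{j,\theta,h'}$, because the relabelling injection differs from the identity on $O(1)$ of the first $n$ levels and hence perturbs the $k$-sample empirical averages by $O(1/n)$ — is correct. But the overall architecture has a genuine gap. The atoms of $\mathcal L=\bigcup_{i<j}\mathcal L_{ij}$ and of $\mathcal K=\bigcup_i\mathcal K_i$ are a.s.\ dense in every time interval (each $\mathcal L_{ij}$ has rate $1$ and there are infinitely many pairs), so there are no ``consecutive atoms of $\mathcal L\cup\mathcal K$'' and no jump-free intervals on which to argue continuity. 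More importantly, your key assertion that $s\mapsto\lim_n n^{-k}\sum_{i_1,\dots,i_k\le n}F\bigl((R_s(i_p,i_q)),(G_s(i_p))\bigr)$ is continuous because each coordinate $R_s(i,j)$ is continuous between its (finitely many) jumps does not follow: the $n$-th approximant $\Phi_n(s)$ has jump times that become dense as $n\to\infty$, and a pointwise limit of functions each continuous off an increasingly dense set need not be continuous. What is needed is uniformity in $s$ of the convergence $\Phi_n\to\Phi$ (equivalently, a control of the sampling measures that is uniform over levels and over time). This is exactly what the paper supplies and you do not: Lemma~\ref{colouringlem} gives, uniformly for $s\in[0,T]$, that all but mass $\epsilon$ of the population sits in finitely many fragments $\Gamma_\gamma$, and Lemma~\ref{lem:cont-frag-masses} together with \cite[Theorem 3.1]{Gu1} gives continuity of $s\mapsto\mathfrak m_s$ restricted to each fragment. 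The same uniformity issue undermines your treatment of left limits: $\psi$ is not continuous for the product topology on marked distance matrices, so the identity $\psi(R_s,G_s)=\psi(R_{s-},G_{s-})$ (with $(R_{s-},G_{s-})$ the coordinatewise left limit) does not by itself identify $\lim_{s'\uparrow s}\psi(X_{s'})$.

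The paper's proof avoids all of this by a global argument: it embeds representatives of $Y_{t_1}$ and $Y_{t_2}$ into the \emph{common} metric space $(\hat Z,\rho)$ (the selective lookdown space), bounds the marked Gromov--Prohorov distance by the Prohorov distance of the sampling measures $m_{s(t_1)}$, $m_{s(t_2)}$ in that embedding, and reduces the claim to weak continuity of $s\mapsto m_s$ on $\hat Z\times\I$, which follows from the two fragment lemmas and monotypicity of the fragments. Your ``backup'' geometric route (lifting the injections to isometric embeddings of the completions) is closer in spirit to this, but as you formulate it, it is still a pointwise-in-$s$ statement and would need the same uniform fragment control to be turned into path continuity. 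To repair your argument you would have to prove a uniform-in-$s$ law of large numbers for the empirical $k$-sample averages, which in substance reproduces Lemmas~\ref{lem:cont-frag-masses} and~\ref{colouringlem}.
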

%
If $(\xi, Y)$ is the solution of \eqref{martprobY}, then $\xi$ is a weak solution of \eqref{totmass}, as can be seen immediately by projecting \eqref{martprobY} to its first  component.  We can also recover the equations for $\xi^A$ and $\xi^B$ given in \eqref{twotype}. For this, we first recall that the type frequencies $\mu_t\{A\}$ and $\mu_t\{B\}$ can be recovered from the projection of ${\mathrm m}^{Y_t}(d(\theta,h))$ on its second component. Consequently, $(\xi^A_t,  \xi^B_t)$ is defined in terms of $(\xi, Y)$ as $(\xi_t \mu_t\{A\}, \xi_t \mu_t\{B\})$. 

 \begin{proposition}\label{prop:projection}
Let $(\xi, Y)$ be the solution of \eqref{martprobY}. Then  $(\xi^A_t, \xi^B_t)_{t\geq 0}$ is a weak solution of the SDE~\eqref{twotype}.
 \end{proposition}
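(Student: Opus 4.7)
The plan is to extract the semimartingale characteristics of $\xi^A_t=\xi_t\mu^{Y_t}\{A\}$ and $\xi^B_t=\xi_t\mu^{Y_t}\{B\}$ directly from the martingale problem \eqref{martprobY} by plugging in well-chosen test functions, and then to invoke a martingale representation argument (on a possibly enlarged probability space) to produce the independent Brownian motions $W^A, W^B$ of \eqref{twotype}. I would work up to $\tau_M$ and then let $M\to\infty$.

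For the drifts, I would take $F_h(v,r,g):=\varphi_M(v)\,v\,\ind_{\{g(1)=h\}}$ with $h\in\{A,B\}$, where $\varphi_M$ is a smooth cutoff equal to~$1$ on $[2/M,M/2]$ and vanishing outside $(1/M,M)$, so that $F_h\in D_M$ and $\Phi_{F_h}(\xi_t,Y_t)=\xi^h_t$ on $\{\xi_t\in[2/M,M/2]\}$. Since neutral updates $\vartheta_{i,j}$ (with $i<j$) never affect $g(1)$, the only surviving contributions to $\mathbf A F_h$ in \eqref{genX} are the $v$-drift and the jump terms at level $j=1$. Integrating against $\nu^\chi$ and using $\int\ind_{\{g(1)=h\}}\,\nu^\chi=\mu^\chi\{h\}$ together with $\mu^\chi\{A\}+\mu^\chi\{B\}=1$, one obtains after the required cancellations
\[
\mathbb A\Phi_{F_A}(v,\chi)=bv\mu^\chi\{A\}-cv^2\mu^\chi\{A\}\mu^\chi\{B\},\qquad \mathbb A\Phi_{F_B}(v,\chi)=-cv^2\mu^\chi\{A\}\mu^\chi\{B\}.
\]
Evaluated at $(\xi_t,Y_t)$, and using $v^2\mu^\chi\{A\}\mu^\chi\{B\}|_{(\xi_t,Y_t)}=\xi^A_t\xi^B_t$, this exhibits continuous local martingales $M^A, M^B$ whose compensators are exactly the drifts appearing in \eqref{twotype}. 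For the brackets I would then take $\tilde F_{hh'}(v,r,g):=\varphi_M(v)^2\, v^2\,\ind_{\{g(1)=h,\,g(2)=h'\}}$; by the conditional i.i.d.\ structure of the sampling under $\nu^\chi$, $\Phi_{\tilde F_{hh'}}(\xi_t,Y_t)=\xi^h_t\xi^{h'}_t$ on $\{\xi_t\in[2/M,M/2]\}$. Applying Itô to $(\xi^A)^2, (\xi^B)^2$ and $\xi^A\xi^B$ and comparing with the martingale problem for $\tilde F_{AA}, \tilde F_{BB}, \tilde F_{AB}$ yields
\[
\langle M^A\rangle_t=\int_0^{t\wedge\tau_M}\xi^A_u\,du,\qquad \langle M^B\rangle_t=\int_0^{t\wedge\tau_M}\xi^B_u\,du,\qquad \langle M^A,M^B\rangle_t=0.
\]

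By Proposition \ref{propcont}, $Y$ is a.s.\ continuous in the marked Gromov-weak topology; since $\chi\mapsto\mu^\chi\{A\}$ is continuous there, $\xi^A, \xi^B, M^A, M^B$ are continuous. A (bivariate) Dambis-Dubins-Schwarz argument applied to the orthogonal pair $(M^A, M^B)$, possibly after an independent Brownian enlargement to accommodate zeros of $\xi^A, \xi^B$, then produces independent standard Brownian motions $W^A, W^B$ with $M^h_t=\int_0^t\sqrt{\xi^h_u}\,dW^h_u$. Combined with the drift formulas, this is \eqref{twotype} up to $\tau_M$; letting $M\to\infty$ yields a weak solution on $[0,\tau_0)$, and the extension to $\R_+$ is straightforward in view of the boundary behavior described in Proposition \ref{strongsol_type_process}. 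The main obstacle is the quadratic-variation computation: one must carefully enumerate how each of the jump terms in \eqref{genX} (neutral pairs intersecting $\{1,2\}$, selective-birth and competitive-death events at $j\in\{1,2\}$) acts on $\ind_{\{g(1)=h,\,g(2)=h'\}}$ and integrate against $\mathrm m^{r,g}$, the bookkeeping being substantially heavier than for the linear test functions $F_h$.
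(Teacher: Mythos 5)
Your proposal is correct and follows essentially the same route as the paper's proof: plug degree-one test functions of the form $v\,\ind_{\{g(1)=h\}}$ (suitably cut off in $v$) into the martingale problem \eqref{martprobY} to identify the drifts of $\xi^A,\xi^B$, then degree-two test functions $v^2\ind_{\{g(1)=h,\,g(2)=h'\}}$ to compute $\langle M^A\rangle$, $\langle M^B\rangle$ and $\langle M^A,M^B\rangle=0$, and finally a martingale representation argument on an enlarged space (the paper cites L\'evy's representation theorem where you invoke Dambis--Dubins--Schwarz) to produce the independent Brownian motions $W^A,W^B$, working up to $\tau_M$ and letting $M\to\infty$. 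Your drift formulas and bracket identities agree with those obtained in the paper, and the bookkeeping you flag as the main obstacle is exactly the computation the paper carries out explicitly in its Step~2.
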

 The proof will be given in Section~\ref{proof_of_two_props}. Because of Proposition \ref{strongsol_type_process}, $\xi$ is non-explosive, hence we infer that $\tau_0=\inf\{t\geq 0,\ \xi_t=0\}$ (with the usual convention that $\inf \emptyset = \infty$).

\section{Building blocks from neutrality}\label{neut}
\subsection{Neutral lookdown space and marked sampling measures}\label{neutsamp}
The \textit{neutral setting} will provide the building blocks for the analysis of the genealogy also in the presence of selection and competition, and we study it specifically in this section. Its only ingredients are the initial condition $(R_0,G_0)$ (being distributed according to the marked distance matrix distribution of a marked ultrametric measure space), and the neutral birth events given by the Poisson point  measures $\{\LL_{i,j}, 1\leq i<j\}$ (see (I2) in Section~\ref{secmain}). As illustrated by Figure \ref{fig:neutralgenealogy}, each of the points of $\LL_{i,j}$ can be seen as a {\em merger} of two ancestral lineages: if  $\LL_{i,j}$ has an atom at time $s$, then the ancestral lineage of $(s,j)$ starts, back into the past, from $(s-,i)$, from there on being identical with the ancestral lineage of $(s,i)$. In this case, the (neutral) genealogical distance of $(s,i)$ and $(s,j)$ equals zero; more generally, the neutral genealogical distance of $(s_1,i)$ and $(s_2,j)$ is determined as follows: trace the neutral ancestral lineages back from $(s_1,i)$  and $(s_2,j)$. If they merge at time $u \ge 0$, then the distance is $(s_1-u)+(s_2-u)$. Otherwise, if $a_1$ and $a_2$ are the labels of the two neutral ancestors at time  $0$, the distance is $s_1+s_2+R_0(a_1,a_2)$. For given $R_0$, this gives rise to an $\mathcal L$-measurable random semi-metric $\rho^{(0)}$ on $\mathbb R_+ \times \mathbb N$. The {\em neutral lookdown space} is the metric completion of $(\mathbb R_+ \times \mathbb N, \rho^{(0)})$,  denoted by $(Z, \rho^{(0)})$. It carries the family of {\em sampling measures}  $\m_s$, $s>0$, defined by \eqref{neutsampmeas}.

By the Glivenko-Cantelli lemma, the assumption that $(R_0,G_0)$ has the marked distance matrix distribution of a marked ultrametric measure space ensures that a.s.
\begin{equation}
\label{eq:ass-m0}
\text{w-}\lim_{n\to\infty}\frac{1}{n}\sum_{i=1}^n\delta_{(i,G_0(i))} \quad \text{exists on } \mathbb T_0\times \mathbb I,
\end{equation}
where $\mathbb T_0$ is  the metric completion of $(\N,R_0)$. This clearly implies that
\begin{equation}
\label{eq:ass-m00}
\m_0 := \text{w-}\lim_{n\to\infty}\frac{1}{n}\sum_{i=1}^n\delta_{(0,i)}
\end{equation}
exists on $Z$, including the case $s=0$ into \eqref{neutsampmeas}. Likewise, \eqref {eq:ass-m0} implies that
%
\begin{equation}\label{m0}
\mathrm{m}_0:=\text{w-}\lim_{n\to\infty}\frac{1}{n}\sum_{i=1}^{n}\delta_{((0,i),G_0(i))}
\end{equation}
exists on $Z\times\I$ a.\,s.
With the notation introduced in Definition \ref{defproper}, and since $R_0$ is a random semi-ultrametric, \eqref{m0} says that $(R_0,G_0)$ is a.s.\ proper. \\

In the next lemma we show,  based on the existence of the neutral sampling measure \eqref{neutsampmeas},  that  the corresponding statement also holds true for $s>0$.
For the neutral genealogy, and for a time $s\geq 0$, denote by $G_s^{(0)}(i)$ the type of $(s,i)$ given by
\begin{equation}\label{neuttrans}
G_s^{(0)}(i):= G_0(a),
\end{equation}
where $a$ is the level of the neutral ancestor of $(s,i)$ at time $0$.
\begin{lemma}
\label{lem:m0s}
The weak limits
\begin{equation} \label{propertypes}
\mathrm {m}_s^{(0)}:=\text{w-}\lim_{n\to\infty}\frac{1}{n}\sum_{i=1}^{n}\delta_{((s,i),G^{(0)}_s(i))},\quad
\mathrm {m}_{s-}^{(0)}:=\text{w-}\lim_{n\to\infty}\frac{1}{n}\sum_{i=1}^{n}\delta_{((s,i),G^{(0)}_{s-}(i))}
\end{equation}
exist on $Z\times\I$ (endowed with the product topology) on an event of probability 1 that does not depend on $s$. Moreover,
$s\mapsto \mathrm {m}_s^{(0)}$ is a.\,s.\ continuous with respect to the weak topology on $Z\times\I$.
\end{lemma}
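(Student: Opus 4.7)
The plan is to construct $\mathrm{m}_s^{(0)}$ and $\mathrm{m}_{s-}^{(0)}$ as fragment-weighted versions of the unmarked neutral sampling measures $\mathfrak{m}_s$ and $\mathfrak{m}_{s-}$ from \cite[Thm 3.1]{Gu1}, exploiting the fact that in the neutral genealogy types propagate unchanged along lineages and that at any fixed positive time the ancestor-at-$0$ map is locally constant. I would work throughout on the intersection of the probability-one events on which $\mathfrak{m}_s$ and $\mathfrak{m}_{s-}$ exist for every $s\ge 0$, $s\mapsto\mathfrak{m}_s$ is weakly continuous, and \eqref{m0} holds.

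For each $i\in\N$, let $\pi_s^{(0)}(i)$ denote the level at time $0$ of the neutral ancestor of $(s,i)$, so that $G_s^{(0)}(i)=G_0(\pi_s^{(0)}(i))$. The key observation is that if $\rho^{(0)}((s,i),(s,j))<2s$, then the most recent common ancestor of $(s,i)$ and $(s,j)$ in the neutral genealogy lives at some time $u\in(0,s]$, whence $\pi_s^{(0)}(i)=\pi_s^{(0)}(j)$. Conversely, distinct time-$0$ ancestors force $\rho^{(0)}$-distance at least $2s$. Consequently, the fragments $F_s^a:=\{(s,i):\pi_s^{(0)}(i)=a\}$, $a\in\N$, are mutually separated by distance $\ge 2s$ in the time-$s$ slice $Z_s$ (the $\rho^{(0)}$-closure of $\{s\}\times\N$ in $Z$), so each $\overline{F_s^a}\subset Z_s$ is both open and closed. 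The type function extends locally constantly to $\bar G_s:Z_s\to\I$ by $\bar G_s(\theta):=G_0(a)$ for $\theta\in\overline{F_s^a}$.

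For the existence of $\mathrm{m}_s^{(0)}$, I fix a bounded continuous $\Phi:Z\times\I\to\R$, write $\Phi(\theta,h)=\sum_{h'\in\I}f_{h'}(\theta)\mathbf{1}_{\{h=h'\}}$, decompose by fragment, and apply Portmanteau (using $\mathrm{supp}(\mathfrak{m}_s)\subset Z_s$ and the continuity of $\mathbf{1}_{F_s^a}$ on $Z_s$) together with dominated convergence (using $\sum_a\mathfrak{m}_s(F_s^a)=1$):
\begin{align*}
\frac{1}{n}\sum_{i=1}^n \Phi\bigl((s,i),G_s^{(0)}(i)\bigr)
=\sum_{a\in\N}\frac{1}{n}\sum_{i=1}^n f_{G_0(a)}((s,i))\,\mathbf{1}_{F_s^a}((s,i))
\xrightarrow[n\to\infty]{}\sum_{a\in\N}\int_{F_s^a} f_{G_0(a)}\,d\mathfrak{m}_s.
\end{align*}
This identifies $\mathrm{m}_s^{(0)}=\sum_a(\mathfrak{m}_s|_{F_s^a})\otimes\delta_{G_0(a)}$; the case $s=0$ is \eqref{m0}, and $\mathrm{m}_{s-}^{(0)}$ is obtained analogously from $\mathfrak{m}_{s-}$.

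For the continuity of $s\mapsto\mathrm{m}_s^{(0)}$, I truncate: $\int\Phi\,d\mathrm{m}_s^{(0)}=\sum_{a\le N}\int_{F_s^a}f_{G_0(a)}\,d\mathfrak{m}_s+\mathrm{err}_N(s)$, with $|\mathrm{err}_N(s)|\le\|\Phi\|_\infty\mathfrak{m}_s(\bigcup_{a>N}F_s^a)$ vanishing as $N\to\infty$ locally uniformly in $s$ by the a.s.\ continuity of $s\mapsto\mathfrak{m}_s$. For fixed $N$ each $s\mapsto\mathfrak{m}_s|_{F_s^a}$ should be weakly continuous: between atoms of $\mathcal L$ the fragment indicator does not change, so continuity is inherited from $\mathfrak{m}_s$; at a birth atom $\mathcal L_{k\ell}(\{s_0\})=1$ only the single level $\ell$ is reassigned from its old fragment to that of $\pi_{s_0}^{(0)}(k)$. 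The main obstacle is to justify that such a single-level reassignment does not produce a jump in $\mathfrak{m}_s|_{F_s^a}$; I would address this by showing that individual lookdown atoms $(s_0,\ell)$ carry zero $\mathfrak{m}_{s_0}$-mass, a fact that follows from the level-mass asymptotics in the construction of $\mathfrak{m}_s$ in \cite{Gu1}.
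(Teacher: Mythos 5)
Your existence argument is essentially the paper's own, in different packaging: the key fact in both is that $\rho^{(0)}((s,i_1),(s,i_2))<2s$ forces a common ancestor at a positive time and hence equal types, so the type map on the time-$s$ slice is uniformly continuous (equivalently, your fragments $\overline{F_s^a}$ are $2s$-separated and clopen in $Z_s$). The paper then simply applies the continuous mapping theorem to $\theta\mapsto(\theta,\hat f(\theta))$ and $\mathfrak m_s$; your fragment-by-fragment Portmanteau computation is that argument unrolled. (To interchange $\lim_n$ with $\sum_a$ you should invoke Scheff\'e for the nonnegative weights $\tfrac1n\sum_{i\le n}\mathbf 1_{F_s^a}((s,i))$, which sum to $1$ for each $n$ and converge termwise to $\mathfrak m_s(\overline{F_s^a})$ with total limit mass $1$; ``dominated convergence'' as invoked has no summable dominating sequence ready at hand.)

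The continuity part has a genuine gap. First, the bound $|\mathrm{err}_N(s)|\le\|\Phi\|_\infty\,\mathfrak m_s\bigl(\bigcup_{a>N}F_s^a\bigr)$ does not vanish locally uniformly in $s$ ``by the a.s.\ continuity of $s\mapsto\mathfrak m_s$'' alone: the sets $F_s^a$ move with $s$, so you first need continuity of $s\mapsto\mathfrak m_s\bigl(\bigcup_{a\le N}F_s^a\bigr)$ and then a Dini-type argument --- this is precisely the content the paper has to establish separately (Lemmas \ref{lem:cont-frag-masses} and \ref{colouringlem}) for its $\mathcal R$-rooted fragments, and it cannot be waved through. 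Second, your justification of the per-fragment continuity is incorrect as stated: at an atom of $\mathcal L_{k\ell}$ it is not only level $\ell$ that is reassigned, since all levels $\ge\ell$ are shifted up by one, and ``between atoms of $\mathcal L$'' is not a meaningful regime once all levels are in play, the atoms being dense in time. The clean repair is to observe that the set $D^a\subset Z$ of all descendants of $(0,a)$ is a single $s$-independent set, clopen in $Z\cap\{\text{time}\ge\epsilon\}$ for every $\epsilon>0$ because distinct $D^a$'s are $2\epsilon$-separated there; hence $\mathbf 1_{D^a}$ is $\mathfrak m_s$-a.s.\ continuous, the per-fragment continuity follows from Portmanteau and the continuity of $s\mapsto\mathfrak m_s$, and the uniform tail bound then follows by Dini. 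Equivalently, and this is what the paper's one-line remark implicitly does, the type map extends to one continuous, $s$-independent function on the relevant part of $Z$, and the entire lemma (existence, left limits, and continuity in $s$) reduces to the continuous mapping theorem applied to the family $(\mathfrak m_s)$.
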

\begin{proof}
Fix $s>0$.
The map $\hat{f}: \{s\}\times\N\to\I$, $(s,i)\mapsto G^{(0)}_s(i)$, is uniformly continuous with respect to~$\rho^{(0)}$. To see this, let $\delta<2s$ arbitrary and suppose $\rho^{(0)}((s,i_1),(s,i_2)) < \delta$. Then $(s, i_1)$ and $(s,i_2)$ have a common ancestor at time $s-\delta/2$; consequently their types coincide.

Thus the map $\hat{f}$ can be extended to a (uniformly) continuous function $\hat{f}:\overline{\{s\} \times \N}\to\I$, where $\overline{\{s\} \times \N}$ denotes the closure of $\{s\} \times \N$ with respect to $\rho^{(0)}$. Then the map $f:\overline{\{s\}\times\N}\to \overline{\{s\}\times\N}\times\I$, $\theta\mapsto(\theta,\hat f(\theta))$ is also continuous. It satisfies
\[\frac{1}{n}\sum_{i=1}^n\delta_{((s,i),G^{(0)}_s(i))}
=\left(\frac{1}{n}\sum_{i=1}^n\delta_{(s,i)}\right) \circ f^{-1},\]
where the right hand side denotes the image measure under $f$. With $\mathfrak m_s$ being the neutral sampling measure defined in \eqref{neutsampmeas}, the  continuous mapping theorem implies that $\frac{1}{n}\sum_{i=1}^n\delta_{((s,i),G^{(0)}_s(i))}$ converges weakly to $\mathrm{m}_s^{(0)}=\m_s \circ f^{-1}$ on an event of probability $1$ that does not depend on $s$. The assertions on the left limits and on continuity follow similarly using the left limit in~\eqref{neutsampmeas} and continuity of $(\mathfrak m_s)$.
\end{proof}

\begin{remark}
\label{rmk:type-distr-neutral}
The function $\hat f$ is called mark function in the sense of e.g.\ \cite{KliemLoehr2015}. The $Z$-component  of $m_s^{(0)}$ is $\m_s$ given by  \eqref{neutsampmeas}, and the  $\I$-component of $m_s^{(0)}$ is the type distribution $\mu^{(0)}_s$ under the neutral transport, which in view of \eqref{propertypes} and \eqref{mug}  obeys a.s.
\begin{equation}\label{muG}
\mu^{(0)}_s={\rm m}_s^{(0)}(Z\times\cdot)=  \text{w-}\lim_{n\to\infty}\frac{1}{n}\sum_{i=1}^{n}\delta_{G^{(0)}_s(i)}  .
\end{equation}
\end{remark}

\subsection{Partitioning the lookdown space: roots and fragments}\label{RaF}


Throughout the article, we assume  that our initial state $(R_0,G_0)$ has the marked distance matrix distribution of a marked ultrametric measure space. This implies in particular that $G_0$ admits type frequencies a.s. (see eq. \eqref{m0}). Moreover, we then have the marked neutral sampling measures \eqref{propertypes} at hand for all times $s$.  Recall the families of independent Poisson point measures $\{\mathcal K_i: i \in \N\}$ from ingredient (I3). We now partition (up to a set that is not charged by any of the sampling measures $\mathfrak m_s$) the entire space $Z$ into (what we call) \textit{fragments} $\Gamma_\gamma \subset Z$ with \textit{roots} $\gamma \in \R_+ \times \N$ as follows. On top of the neutral lookdown construction, we think of a competition respectively a fecundity ``cross'' added at $(s,i)$ when  $\mathcal K_i$ places an atom at $(s,i)$. To be more precise, fix $C<\infty$ and put
\begin{align}\label{FV}
  \widetilde{\mathcal R} &:= \bigcup_{i\in \mathbb N}  \Big(\big\{ s \in \R_+: \mathcal K_i(\{s\} \times [0,C] \times [0,1]\times\{\beta, \delta\}) \geq 1 \big\} \times \{i\} \Big)\subset \R_+ \times \N.
\end{align}
Note that the restriction to $[0,C]$ guarantees that the overall rate of potential events on a fixed level is bounded on any finite time-interval. As a consequence the points in $\widetilde{\mathcal R} \cap (\R_+ \times \{i\})$ do not accumulate for fixed $i\in \N$ almost surely.

Now let
\begin{equation}\label{defRoots}
 \mathcal R := \widetilde{\mathcal R} \cup (\{0\} \times \mathbb N) \subset \R_+ \times \N 
\end{equation}
be the set of \textit{roots}. The types and lineages in the subtree above a root evolve according to the dynamics of the neutral model until they hit another root.

\begin{remark}\label{neutallin}
Note that the (neutral) ancestral lineage of each element $\theta \in Z \backslash (\R_+ \times \N)$ is well-defined. Indeed, take a sequence $(s_n, \ell_n) \in \R_+ \times \N,\, n \in \N$, such that $(s_n,\ell_n) \rightarrow \theta$ for $n \rightarrow \infty$. Then we have in particular $s_n \rightarrow s \in \R_+$. Without loss of generality, assume $(s_n)_{n \in \N}$ is monotonically increasing. How to determine the ancestor of $\theta$ at time $s-\epsilon$ for $0 < \epsilon < s$ arbitrary? There exists $n_0 \in \N$ such that $\rho^{(0)}((s_n,\ell_n),\theta) < \epsilon$ for all $n \geq n_0$, that is, $(s_n,\ell_n)$ and $\theta$ have a common ancestor at time $s-\epsilon$ for all $n \geq n_0$. Thus, take the ancestor of $\theta$ at time $s-\epsilon$ to be the one of $(s_{n_0},\ell_{n_0})$.
\end{remark}

For each $\gamma \in \mathcal R$ let
\begin{align*}
  \Gamma_\gamma :=
  \{ \theta \in Z : & \ \theta \mbox{ descends from } \gamma \mbox{ and there are no points in } \mathcal R \mbox{ on the } \\
  & \mbox{ lineage connecting } \theta \mbox{ with } \gamma \} \subset Z.
\end{align*}

\begin{remark}
We make the following observations.
\begin{enumerate}
\item[1)]
Interpret $\Gamma_\gamma$ as descendants of $\gamma$ in a \textit{neutral} infinite alleles model with mutation. Here, the frequencies exist and $Z$ can be broken into a countable number of fragments, rooted in $\mathcal{R}$. This construction yields $\m_s(\Gamma_\gamma)$ for \textit{all} times $s \geq 0$. It can be shown that a.\,s.,
$$ \m_s\big( \bigcup_{\gamma \in \mathcal{R}} \Gamma_\gamma \big) = 1 $$
for all $s>0$. Further details are given in the proof of Lemma \ref{lem:mass} below.
\item[2)]
By restricting to $\{s\}  \times \N $, a partition of $\N$ is inherited. This
partition  depends on $s$, where $i \sim j$ if $(s,i)$ and $(s,j)$ have a common ancestor living between times $0$ and $s$ and there is no root on their geodesics.
\item[3)]
In contrast to a tree-valued process whose states describe genealogical
trees at fixed times, the lookdown space describes all individuals which
live at any time. From this object, we can read off the state of the
tree-valued processes at time $s$ using a restriction of the lookdown
space. The lookdown space itself however is universal for all $s$.
\end{enumerate}
\end{remark}

The sets $\Gamma_\gamma$, $\gamma \in \mathcal R$, form a  partition of the set that is obtained from $Z$ by removing the accumulation points of $\mathcal R$ in $Z$. Almost surely, the set of these accumulation points has zero mass under all $\m_s$, $s>0$. This is the content of the following lemma.
\begin{lemma}
\label{lem:mass}
Almost surely, $\m_s(Z\setminus\bigcup_{\gamma\in\mathcal R} \Gamma_\gamma)=0$ and  $\m_{s-}(Z\setminus\bigcup_{\gamma\in\mathcal R} \Gamma_\gamma)=0$ for all $s>0$.
\end{lemma}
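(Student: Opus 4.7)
The complement $B := Z \setminus \bigcup_{\gamma \in \mathcal R}\Gamma_\gamma$ consists exactly of those $\theta \in Z$ whose neutral ancestral lineage (traced back from its time $s_\theta$) has roots accumulating at time $s_\theta$, i.e.\ has no unique most recent root; note that tracing any lineage back sufficiently far reaches a level in $\{0\} \times \mathbb N \subset \mathcal R$, so the only way to fall outside $\bigcup \Gamma_\gamma$ is through such an accumulation near $s_\theta$. My plan is (i) to verify that every $(s,i)$ with $i \in \mathbb N$ lies in some $\Gamma_\gamma$ almost surely, and (ii) to approximate $B$ from above by a decreasing family of open sets $\tilde A_\eta$ on which a Poisson rate bound gives $\mathbb E[\m_s(\tilde A_\eta)] = O(\eta)$, so that letting $\eta \downarrow 0$ yields $\m_s(B) = 0$.

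For (i), I would trace the neutral ancestral lineage of $(s,i)$ backwards: it is a c\`adl\`ag path in the levels $\mathbb N$, piecewise constant between neutral lookdown events and non-increasing as $u \downarrow 0$. While the lineage sits at any level $\ell$, the root candidates on that level---atoms of $\mathcal K_\ell$ in $\mathbb R_+ \times [0,C] \times [0,1] \times \{\beta, \delta\}$---form a homogeneous Poisson process of rate $2C < \infty$ thanks to the cutoff $C$ in \eqref{FV}. Since the cumulative root intensity along the entire lineage from time $0$ to $s$ is therefore bounded by $2Cs < \infty$, the lineage contains only finitely many roots a.\,s., hence a unique most recent one (which may be the time-$0$ ancestor in $\{0\} \times \mathbb N$). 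A countable intersection over $i \in \mathbb N$ and over $s$ in a countable dense subset of $(0,\infty)$ then covers all $(s,i)$ simultaneously.

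For (ii), I would define for $\eta > 0$
\[
\tilde A_\eta := \{\theta \in Z : \theta\text{'s neutral lineage has a root with time in }(\max(s_\theta - 2\eta, 0),\, s_\theta)\},
\]
so that $B \subseteq \bigcap_{n \ge 1} \tilde A_{1/n}$. A perturbation argument in $(Z, \rho^{(0)})$ shows $\tilde A_\eta$ is open: if $\theta \in \tilde A_\eta$ is witnessed by a root $\gamma$ at time $s_\gamma$ with $s_\theta - 2\eta < s_\gamma < s_\theta$, then for any $\theta'$ with $\rho^{(0)}(\theta, \theta')$ smaller than both $s_\theta - s_\gamma$ and $2\eta - (s_\theta - s_\gamma)$ one has $s_{\theta'}$ close to $s_\theta$ and the common ancestor time of $\theta, \theta'$ strictly exceeding $s_\gamma$, so $\gamma$ lies on $\theta'$'s lineage and in its backward $2\eta$-window, giving $\theta' \in \tilde A_\eta$. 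The Poisson rate-$2C$ estimate yields $\mathbb P((s,i) \in \tilde A_\eta) \le 1 - e^{-4C\eta} \le 4C\eta$ uniformly in $i$, and Portmanteau applied to the weak convergence \eqref{neutsampmeas} combined with Fatou gives
\[
\mathbb E[\m_s(\tilde A_\eta)] \le \liminf_{n \to \infty} \tfrac{1}{n} \sum_{i=1}^n \mathbb P((s,i) \in \tilde A_\eta) \le 4C\eta.
\]
Monotone convergence as $\eta \downarrow 0$ then yields $\mathbb E[\m_s(B)] \le \lim_n \mathbb E[\m_s(\tilde A_{1/n})] = 0$, hence $\m_s(B) = 0$ a.\,s. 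The same argument with the left-limit version of \eqref{neutsampmeas} handles $\m_{s-}(B) = 0$, and the joint-in-$s$ conclusion follows by performing the argument for $s$ in a countable dense subset of $(0, \infty)$ and then invoking the a.\,s.\ weak continuity of $s \mapsto \m_s$ from \cite[Thm~3.1]{Gu1}.

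The main obstacle will be the rigorous openness verification for $\tilde A_\eta$ in $(Z, \rho^{(0)})$: the perturbation argument sketched above works smoothly at ``interior'' points of $Z$, but some care is required at limit points of levels (which are precisely the candidates for $B$), where the ancestral lineage may behave delicately near $s_\theta$. A secondary but routine point is the simultaneous-in-$s$ conclusion, which is standard given the continuity of $\m_s$ together with the monotonicity of $\tilde A_\eta$ in $\eta$.
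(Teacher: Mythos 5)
Your argument takes a genuinely different route from the paper: the paper identifies the roots $\mathcal R$ with mutation events in an infinite-alleles lookdown model (rate $C$ parent-independent mutation with uniform types) and observes that a positive $\m_s$-mass of accumulation points would force a defective type distribution at time $s$, which is excluded \emph{simultaneously for all $s$} by \cite[Theorem~3.2]{MR1681126}. Your first-moment/Portmanteau argument is more self-contained, and for \emph{fixed} $s$ it is essentially correct: the openness of $\tilde A_\eta$ goes through (your perturbation estimate works even at limit points, since $\rho^{(0)}(\theta,\theta')<\delta$ forces a common ancestor at a time $u>s_\theta-\delta>s_\gamma$, below which the two lineages coincide), the inclusion $B\subseteq\bigcap_n\tilde A_{1/n}$ holds because roots on the lineage of $\theta$ can only accumulate at $s_\theta$ (away from $s_\theta$ the lineage sits at a bounded level), and Portmanteau plus Fatou give $\mathbf E[\m_s(\tilde A_\eta)]\le 4C\eta$.

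The genuine gap is the upgrade from ``for each fixed $s$ a.s.'' to ``a.s.\ for all $s>0$'', which is exactly what the lemma asserts and what Corollary~\ref{corol:lemma1.8} and Lemmas~\ref{lem:cont-frag-masses}, \ref{colouringlem} consume. Your proposed route --- do it for $s$ in a countable dense set and invoke weak continuity of $s\mapsto\m_s$ --- does not close this gap: $B$ and $\bigcap_n\tilde A_{1/n}$ are not open, and for the open sets $\tilde A_\eta$ the Portmanteau inequality gives only lower semicontinuity, $\m_s(\tilde A_\eta)\le\liminf_{q\to s}\m_q(\tilde A_\eta)$, so to control $\m_s(\tilde A_\eta)$ at an exceptional time $s$ you would need an almost-sure bound on $\sup_{q\in\mathbb Q}\m_q(\tilde A_\eta)$, whereas your argument only yields $\mathbf E[\m_q(\tilde A_\eta)]\le 4C\eta$ for each $q$ separately; a countable union over $q$ destroys the bound. (Monotonicity of $\tilde A_\eta$ in $\eta$ does not help here either.) This uniformity over exceptional times is precisely the content the paper imports from the Donnelly--Kurtz infinite-alleles theorem; to complete your approach you would need a substitute, e.g.\ a maximal inequality or martingale/exchangeability argument controlling $\sup_{s\le T}\m_s(\tilde A_\eta)$, which is not routine. (A minor additional remark: your step (i) is fine for \emph{all} $(s,i)$ simultaneously, not just $s$ in a dense set, because the lineage of $(s,i)$ stays on levels $\le i$ and there are a.s.\ only finitely many roots on levels $\le i$ in any bounded time window; the dense-subset phrasing there is unnecessary.)
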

\begin{proof}
The points of $\mathcal R$ can be thought of as mutation events in an infinite alleles model that come with rate $C$ along the lineages  in a lookdown model, say with type space $[0,1]$ and parent independent mutation where the type in each mutation event is drawn uniformly and independently. To obtain a contradiction, assume that there exists an $s$ such that the set of accumulation points of $\mathcal R$ has nonzero mass under $\m_s$. As all such accumulation points have different types in the infinite alleles model, this results in a type distribution of mass smaller than $1$. The lookdown construction for the infinite alleles model \cite[Theorem 3.2]{MR1681126} shows, however, that there are a.\,s.\ no exceptional time points with defective type distribution.
\end{proof}

\begin{corollary}\label{corol:lemma1.8}
Almost surely,
\[\m_s(\Gamma_\gamma)= \lim_{n\to\infty}\frac{1}{n}\sum_{i=1}^n \ind_{\{(s,i)\in\Gamma_\gamma\}}=
\m_{s-}(\Gamma_\gamma)= \lim_{n\to\infty}\frac{1}{n}\sum_{i=1}^n \ind_{\{(s-,i)\in\Gamma_\gamma\}}\]
for all $\gamma\in\mathcal R$  and $s > 0$.
\end{corollary}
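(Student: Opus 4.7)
The plan is to reduce the claim to a statement about an auxiliary neutral infinite alleles model, just as was done for Lemma~\ref{lem:mass}, and then to invoke the lookdown representation of Donnelly and Kurtz \cite[Thm.~3.2]{MR1681126}. Concretely, independently of $(R_0,G_0,\mathcal L,\mathcal K)$, I would attach to each root $\gamma\in\mathcal R$ an i.i.d.\ Uniform$[0,1]$ label $x_\gamma$ and propagate these labels forward along the neutral lineages, setting $X_s(i):=x_\gamma$ whenever $(s,i)\in\Gamma_\gamma$. By the very definition of the fragments,
\begin{equation*}
\ind_{\{(s,i)\in\Gamma_\gamma\}}=\ind_{\{X_s(i)=x_\gamma\}},
\end{equation*}
so the assertion becomes one about empirical type frequencies in the parent-independent mutation model whose mutation times are exactly the atoms of $\mathcal R$.

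Next, I would apply \cite[Thm.~3.2]{MR1681126} to obtain, on a single event of probability one and simultaneously for every $s>0$, the weak convergence
\begin{equation*}
\frac{1}{n}\sum_{i=1}^n\delta_{X_s(i)}\;\Longrightarrow\;\sum_{\gamma\in\mathcal R}\m_s(\Gamma_\gamma)\,\delta_{x_\gamma},
\end{equation*}
together with the analogous statement for the left limit $X_{s-}(i)$. Since the labels $(x_\gamma)_{\gamma\in\mathcal R}$ are a.s.\ pairwise distinct (being i.i.d.\ uniform and indexed by a countable set), each atom $x_\gamma$ of the limit measure is isolated. A portmanteau argument applied to a sufficiently small open interval around $x_\gamma$ and to its closure then promotes the weak convergence to convergence of the individual atomic mass, yielding
\begin{equation*}
\lim_{n\to\infty}\frac{1}{n}\sum_{i=1}^n\ind_{\{(s,i)\in\Gamma_\gamma\}}=\m_s(\Gamma_\gamma),
\end{equation*}
and the corresponding identity with $s-$. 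Since $\mathcal R$ is countable, one can then intersect the null sets to obtain the stated ``for all $\gamma\in\mathcal R$'' uniformity on a common event of probability one.

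The main obstacle I anticipate is the passage from weak convergence of the empirical type distributions to convergence of individual atomic masses; this is precisely what the fresh i.i.d.\ uniform labelling is designed to handle, since it guarantees that the atoms of the limit measure are a.s.\ separated, so that the portmanteau argument applies. A minor bookkeeping point is that $\mathcal R$ depends on the cutoff $C$ fixed in \eqref{FV}, but since the statement to be proved is formulated with respect to the same $\mathcal R$, no additional argument is required.
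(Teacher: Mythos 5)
Your reduction to an infinite-alleles model with i.i.d.\ uniform labels on the roots is very much in the spirit of the paper (this is exactly how Lemmas~\ref{lem:mass} and~\ref{lem:cont-frag-masses}(i) are argued), but the step where you extract individual atomic masses from the weak convergence contains a genuine gap. Since $\mathcal R$ is countably infinite, the labels $(x_\gamma)_{\gamma\in\mathcal R}$ are a.s.\ \emph{dense} in $[0,1]$; the atoms of the limit measure are therefore not isolated, contrary to what you assert. And weak convergence of purely atomic probability measures supported on a fixed countable dense set does \emph{not} imply convergence of the individual atomic masses: take $\mu=\tfrac12\delta_0+\sum_{k\ge1}2^{-k-1}\delta_{1/k}$ and $\mu_n=\tfrac12\delta_{1/n}+2^{-n-1}\delta_0+\sum_{k\neq n}2^{-k-1}\delta_{1/k}$; then $\mu_n\Rightarrow\mu$, both are supported on $\{0\}\cup\{1/k\}$, yet $\mu_n(\{0\})\to0\neq\tfrac12$. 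Your Portmanteau argument on a small interval around $x_\gamma$ only delivers the upper bound $\limsup_n\frac1n\sum_{i=1}^n\ind_{\{(s,i)\in\Gamma_\gamma\}}\le\m_s(\Gamma_\gamma)$; the matching lower bound is precisely what can fail when mass slides along nearby atoms, and neither the normalization $\sum_\gamma\m_s(\Gamma_\gamma)=1$ from Lemma~\ref{lem:mass} nor a Fatou/Scheff\'e argument rescues it (the counterexample above satisfies all those constraints).

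The paper avoids this by never leaving the lookdown space: it applies the Portmanteau theorem directly to the set $\Gamma_\gamma$ viewed inside $\overline{\{s\}\times\N}\subset Z$, using that a.s.\ its topological boundary there is not charged by $\m_s$ (a consequence of Lemma~\ref{lem:mass}), together with \eqref{neutsampmeas} and the continuity of $s\mapsto\m_s$ for the left-limit statement. The geometric content — that the fragments have $\m_s$-null boundaries in $Z$ — is exactly what your relabelling to $[0,1]$ throws away, because the uniform labels destroy the metric separation between distinct fragments. To repair your route you would need an additional ingredient, e.g.\ a fixed-$s$ exchangeability/law-of-large-numbers argument for each $\gamma$ followed by a separate continuity argument to pass from fixed $s$ to all $s>0$ simultaneously; as written, the proposal does not establish the claimed identity.
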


\begin{proof}
Almost surely, for each $s>0$ and $\gamma\in\mathcal R$, the boundary of $\Gamma_\gamma$ as a subset of $\overline{\{s\}\times\N}\subset Z$ is not charged by $\m_s$. Hence, the Portmanteau theorem,~\eqref{neutsampmeas}, and the continuity of $(\m_s)$ yield the result.
\end{proof}

\begin{lemma}\label{lem37}
\label{lem:cont-frag-masses}
Almost surely,
\begin{itemize}
\item[(i)]
the fragment masses $\m_s(\Gamma_\gamma)$ are continuous in $s$ for each $\gamma$,
\item[(ii)]
for each fragment $\Gamma_\gamma$, the restriction $\m_s(\cdot \cap \Gamma_\gamma)$ is continuous in $s$ with respect to the weak topology on $(\Gamma_\gamma,\rho^{(0)})$.
\end{itemize}
\end{lemma}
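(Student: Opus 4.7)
The plan is to deduce both parts from the Portmanteau theorem, using the a.\,s.\ weak continuity of $s\mapsto\m_s$ on $(Z,\rho^{(0)})$ noted from \cite[Thm 3.1]{Gu1}. The one substantive ingredient I need to supply is that on a single probability-one event, for every $s>0$ and every $\gamma\in\mathcal R$ the set $\Gamma_\gamma$ is a $\m_s$-continuity set in $Z$, i.e.\ $\m_s(\partial\Gamma_\gamma)=0$, where $\partial\Gamma_\gamma$ denotes the topological boundary of $\Gamma_\gamma$ in $(Z,\rho^{(0)})$.

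To establish this boundary fact, I will exploit that $\m_s$ is supported on $\overline{\{s\}\times\N}$, so only $\partial\Gamma_\gamma\cap\overline{\{s\}\times\N}$ matters. Within the time-section, $\rho^{(0)}$ is an ultrametric, and $\Gamma_\gamma\cap\overline{\{s\}\times\N}$ is relatively open: if $(s,i)\in\Gamma_\gamma$ and $(s,i')$ lies at $\rho^{(0)}$-distance strictly less than $2(s-t_\gamma)$ from $(s,i)$, where $t_\gamma$ is the time-coordinate of $\gamma$, then $(s,i)$ and $(s,i')$ share an ancestor above $\gamma$, and therefore $(s,i')\in\Gamma_\gamma$ as well. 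Consequently, any boundary point of $\Gamma_\gamma$ inside the time-section must be an accumulation point of $\mathcal R$ in $Z$, and Lemma~\ref{lem:mass} tells us these carry no $\m_s$-mass. This is precisely the observation invoked without further comment in the proof of Corollary~\ref{corol:lemma1.8}.

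With the boundary fact in hand, part (i) is immediate from the Portmanteau theorem applied along any sequence $s_n\to s$. For part (ii), I would fix a bounded continuous $f:\Gamma_\gamma\to\R$ (with respect to the subspace topology from $(Z,\rho^{(0)})$) and extend it to $\tilde f:Z\to\R$ by setting $\tilde f=0$ off $\Gamma_\gamma$; the resulting $\tilde f$ is bounded Borel measurable with set of discontinuities contained in $\partial\Gamma_\gamma$, and hence $\m_s$-null. The extension of the Portmanteau theorem to bounded Borel integrands whose discontinuity set is null under the limit measure (proved, e.g., via Skorokhod coupling and bounded convergence) then yields $\int_Z\tilde f\,d\m_{s_n}\to\int_Z\tilde f\,d\m_s$ whenever $s_n\to s$, which is precisely $\int_{\Gamma_\gamma}f\,d(\m_{s_n}|_{\Gamma_\gamma})\to\int_{\Gamma_\gamma}f\,d(\m_s|_{\Gamma_\gamma})$. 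Letting $f$ range over $C_b(\Gamma_\gamma)$ gives the asserted weak continuity of $s\mapsto\m_s|_{\Gamma_\gamma}$. The only delicate step is the boundary fact in the second paragraph; the support condition on $\m_s$ ensures that any boundary points of $\Gamma_\gamma$ lying outside $\overline{\{s\}\times\N}$ contribute nothing to $\m_s(\partial\Gamma_\gamma)$, so that controlling the boundary inside the time-section suffices.
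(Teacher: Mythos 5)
Your overall plan (Portmanteau plus the a.s.\ weak continuity of $s\mapsto\m_s$ from \cite[Theorem 3.1]{Gu1}) is workable, but the one step you correctly identify as the substantive one — $\m_s(\partial\Gamma_\gamma)=0$ — is justified by a claim that is false. You assert that if $(s,i)\in\Gamma_\gamma$ and $\rho^{(0)}((s,i),(s,i'))<2(s-t_\gamma)$, then $(s,i')\in\Gamma_\gamma$. Sharing a neutral ancestor at a time $u>t_\gamma$ only makes the lineage of $(s,i')$ root-free \emph{below} the coalescence time $u$, where it coincides with the root-free lineage of $(s,i)$; the portion of the lineage of $(s,i')$ between times $u$ and $s$ is a different path and may carry a point of $\mathcal R$ (an atom of some $\mathcal K_j$ with $z\le C$), in which case $(s,i')$ belongs to the fragment rooted at that later root and not to $\Gamma_\gamma$ — recall that membership in $\Gamma_\gamma$ requires \emph{no} roots on the whole connecting lineage. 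The ball of radius $2(s-t_\gamma)$ around $(s,i)$ is exactly the set of time-$s$ individuals coalescing with $(s,i)$ after time $t_\gamma$, and generically it meets infinitely many fragments $\Gamma_{(u',j')}$ with $t_\gamma<u'\le s$. So your openness radius is wrong, and with it the identification of the relative boundary with the ($\m_s$-null) set of accumulation points of $\mathcal R$. The set $\Gamma_\gamma\cap\overline{\{s\}\times\N}$ is in fact relatively open, but only with a realization-dependent radius: one must argue that for each point of $\Gamma_\gamma$ in the time-$s$ section there is an $\epsilon>0$ such that no root lies on any of the lineages spanned by the $\epsilon$-ball during $(s-\epsilon/2,s]$, using the discreteness of $\mathcal R$ on each level; this is precisely the content your proof is missing (and is also what is silently invoked at the start of the proof of Corollary~\ref{corol:lemma1.8}).

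For comparison, the paper's proof takes a route that does not require the full boundary-nullity statement. Part (i) is obtained by reading the fragment masses as type frequencies in a neutral infinite-alleles lookdown model in which the roots act as mutation events at rate $C$; a.s.\ continuity in $s$ of these frequencies is supplied by \cite[Theorem~3.2]{MR1681126}, exactly as in the proof of Lemma~\ref{lem:mass}. Part (ii) is argued by contradiction via the closed-set form of Portmanteau: a discontinuity yields a closed $A\subset(\Gamma_\gamma,\rho^{(0)})$ with $\limsup_{s'\to s}\m_{s'}(A)>\m_s(A)$; since $\m_s(\overline{\Gamma_\gamma}\setminus\Gamma_\gamma)=0$ by Lemma~\ref{lem:mass}, the same strict inequality holds for the closure of $A$ in $(Z,\rho^{(0)})$, contradicting the weak continuity of $\m_s$ on the neutral lookdown space. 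If you want to salvage your version, you need to replace the $2(s-t_\gamma)$ argument by a genuine proof of relative openness (or adopt the paper's reduction to $\overline{\Gamma_\gamma}\setminus\Gamma_\gamma$); as written, the central step fails.
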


\begin{proof}
The statement (i) follows by relating the assertion to an infinite alleles model,  similar as in the proof of Lemma~\ref{lem:mass}.

To prove (ii), note that a discontinuity at a time $s$ implies the existence of a closed subset $A$ of $(\Gamma_\gamma,\rho^{(0)})$ with $\limsup_{s' \rightarrow s} \m_{s'}(A) > \m_s(A)$.
Since $\m_s(\overline{\Gamma_\gamma}\setminus \Gamma_\gamma) = 0$ by Lemma~\ref{lem:mass}, the inequality holds also for the closure of $A$ in $(Z,\rho^{(0)})$.
Thus, a discontinuity at time $s$ results in a discontinuity of $\m_s$ on the neutral lookdown space in contradiction to \cite[Theorem~3.1]{Gu1}.
\end{proof}


\begin{lemma}\label{colouringlem}
For all $\epsilon, T>0$ there exists almost surely a random $\ell \in \mathbb N$ such that
\[ \sum_{\gamma=(u,i) \in \mathcal{R},\ \mbox{ \footnotesize{with} } u \in \mathbb R_+,\, i \leq \ell} \m_s(\Gamma_\gamma) \geq 1-\epsilon \mbox{ for all } s \in [0,T]. \]
\end{lemma}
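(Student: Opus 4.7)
The plan is to apply Dini's theorem to the monotone partial sums
\[
M_\ell(s) := \sum_{\gamma=(u,i)\in\mathcal R,\ i\le\ell} \m_s(\Gamma_\gamma),\qquad s\in[0,T],
\]
viewed as a non-decreasing (in $\ell$) sequence of continuous functions on the compact interval $[0,T]$. Uniform convergence $M_\ell\uparrow 1$ then immediately delivers a random $\ell$ with $M_\ell(s)\ge 1-\epsilon$ for all $s\in[0,T]$, which is the desired conclusion.

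The first step I would take is to check that $M_\ell$ is almost surely a finite sum on $[0,T]$, so that termwise arguments are legal. Since $\Gamma_\gamma$ consists of descendants in $Z$ of $\gamma=(u,i)$, one has $\m_s(\Gamma_\gamma)=0$ whenever $s<u$, so only roots with $u\le T$ contribute on $[0,T]$. The set of such roots with $i\le\ell$ consists of the $\ell$ time-zero roots $(0,1),\dots,(0,\ell)$, together with, for each $1\le i\le\ell$, the atoms of $\mathcal K_i$ in the bounded region $[0,T]\times[0,C]\times[0,1]\times\{\beta,\delta\}$, which is a Poisson variable with finite mean $2CT$. Thus $M_\ell$ restricts to a finite sum.

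Next, by Lemma~\ref{lem37}(i) each $s\mapsto \m_s(\Gamma_\gamma)$ is a.s.\ continuous on $\mathbb R_+$, so $M_\ell$ is continuous on $[0,T]$ as a finite sum of continuous functions, and monotonicity $M_\ell\le M_{\ell+1}$ is immediate from the definition. Pointwise convergence $M_\ell(s)\uparrow 1$ for every $s\in(0,T]$ is then a direct consequence of Lemma~\ref{lem:mass}, which asserts $\sum_{\gamma\in\mathcal R}\m_s(\Gamma_\gamma)=1$ a.s. Applying Dini's theorem on the compact set $[0,T]$ (a monotone sequence of continuous functions converging pointwise to a continuous limit converges uniformly) upgrades this to uniform convergence, and the lemma follows.

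I do not foresee a conceptual obstacle; the main bookkeeping point is verifying continuity of $M_\ell$ at the endpoint $s=0$, which follows from the continuity statement of Lemma~\ref{lem37}(i) extended down to $s=0$ (one can alternatively apply Dini on $[\delta,T]$ for arbitrarily small $\delta>0$ and handle $[0,\delta]$ separately using the weak continuity of the neutral sampling measures and the initial marked sampling measure $\mathrm m_0$). Either way, no new tool beyond Dini's theorem and the fragment continuity results of Section~\ref{RaF} is needed.
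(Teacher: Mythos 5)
Your proof is correct and is essentially the paper's argument in different packaging: the paper establishes the required uniform bound for the monotone partial sums via a first-failure-time contradiction (the increasing stopping times $\vartheta_k$, combined with continuity of the finitely many contributing fragment masses from Lemma~\ref{lem:cont-frag-masses}(i) and pointwise convergence from Lemma~\ref{lem:mass}), which is exactly a hands-on proof of Dini's theorem in this setting, relying on the same three ingredients you isolate (a.s.\ finiteness of the effective sum on $[0,T]$, continuity of each term, pointwise convergence to $1$). The one delicate point — the endpoint $s=0$, where Lemma~\ref{lem:mass} is only stated for $s>0$ — is flagged explicitly by you, whereas the paper's proof passes over it silently, so no gap is introduced relative to the original.
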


\begin{proof}
For fixed $s \in [0,T]$, this follows from Lemma~\ref{lem:mass}. Let
\[ \vartheta_k = \inf\big\{s \in [0,2T]: \sum_{\gamma=(u,i) \mbox{ \footnotesize{with} } u \in \mathbb R_+,\, i \leq k} \m_s(\Gamma_\gamma) < 1-\epsilon \big\}, \]
where we set $\inf \emptyset = 2T$. Then $\vartheta_k$ is monotonically increasing in $k$. Set
\[ \vartheta = \sup_{k \in \mathbb N} \vartheta_k. \]
It suffices to show that $\vartheta=2T$ almost surely.

On the event that $\vartheta<2T$, there exists by Lemma~\ref{lem:mass} a.\,s.\ $k\in\N$ with
\[\sum_{\gamma=(u,i) \mbox{ \footnotesize{with} } u \in \mathbb R_+,\, i \leq k} \m_\vartheta(\Gamma_\gamma) \geq  1-\epsilon/2.\]
By Lemma~\ref{lem:cont-frag-masses} (i), almost surely, the fragment masses $\m_s(\Gamma_\gamma)$ are continuous in $s$ for each $\gamma$. Hence, there exists $\delta>0$ with
\[ \sum_{\gamma=(u,i) \mbox{ \footnotesize{with} } u \in \mathbb R_+,\, i \leq k} \m_s(\Gamma_\gamma) \geq 1-\epsilon \]
for all $s\in(\vartheta-\delta,\vartheta+\delta)$. This implies that for all $\ell\geq k$ with $\vartheta_\ell>\vartheta-\delta$, we have $\vartheta_\ell\geq\vartheta+\delta$, in contradiction to the definition of $\vartheta$.
Thus $\{\vartheta < 2T\}$ must be a null event and the claim follows.
\end{proof}

\section{An SDE for type configuration and population size: proof of Theorem \ref{Gprop}}\label{sec_itscheme}
In this section we provide an iteration scheme which leads to the proof of Theorem \ref{Gprop}. We will be guided by the proof of Theorem 4.1 in \cite{MR1728556}. The additional (and substantial) challenge that is overcome in our proof is that the total mass, which in \cite{MR1728556} was assumed constant, now is a stochastic process which depends on the type configurations.

Recalling the ingredients from Section \ref{sec:pathwiseconstructionLD}, we will work with the filtration $\mathscr F = (\mathscr F_s)$, where $\mathscr F_s$ is generated by $\mathcal W_u$, $u\le s$ and those points in $\mathcal L$ and $\mathcal K$ whose time component is at most $s$.
Following the steps described in Section \ref{section:LDrepresentation}, we will prove the existence and uniqueness of the type process $G$ in \eqref{eqG}. A substantial difficulty is that the SDEs for $G$ depend on the mass process $\zeta$ (in the lookdown time-scale) that itself depends on the process $\mu^{G_.}\{A\}$ of proportions of type $A$. Also, Theorem \ref{Gprop} asserts that $G_s$  admits type frequencies for all times $s>0$, so that $\mu^{G_s}\{A\}$ is well-defined.

Let us introduce the following function, describing the drift of the process $\xi$:
\begin{equation}
\label{fabbr}
f(v, p) = bpv-2cp(1-p)v^2, \qquad v \ge 0,\,  p\in [0,1].
\end{equation}
Fix a  constant $C\in (0,\infty)$ in \eqref{FV} that bounds the rate at which selective and competitive events occur.
For a modification of the system of SDEs \eqref{eqG} and \eqref{totmassLD}, where we use $C$ and $M$ to control the dynamics (see third term in the r.h.s. of \eqref{eqGCM} below), we prove existence and strong uniqueness by a Picard iteration-like argument. For this we put
\begin{equation}
\label{frest}
f_M(v, p) = f((v\vee\tfrac 1M)\wedge M,p).
\end{equation}

The following key proposition treats SDEs similar to the ones in Theorem \ref{Gprop}, but with $f_M$ instead of $f$, which simplifies the problem of controlling the population size. Replacement of $f_M$ by $f$ will be treated at the end of the section, in the {\em completion of the proof of Theorem \ref{Gprop}}.

\begin{proposition}\label{propCM}
Let the  $\mathbb I^\mathbb N$-valued random variable $G_0$ be exchangeable. For given $M > 1$ let \mbox{$v_0 \in (\frac 1M, M)$.}   The following system \eqref{eqGCM}, \eqref{eqzeta}  of SDEs has a unique strong solution:
\begin{align}
\notag G_s(j) &= G_0(j)+ \sum_{i=1}^{j-1}\int_{[0,s]}(G_{u-}(i)-G_{u-}(j))d\LL_{ij}(u) \\
&+  \sum_{1\le i<k<j}\int_{[0,s]}(G_{u-}(j-1)-G_{u-}(j))d\LL_{ik}(u) \label{eqGCM} \\
\notag &+\int_{[0,s]\times [0,C] \times [0,1]\times\{\beta,\delta\}} (q(G_{u-}(j), G_{u-},     (\zeta_{u-}\vee \frac{1}{M}) \wedge M, z,w, \omega) - G_{u-}(j)) d\mathcal K_j(u, z,w,\omega), \\
 \zeta_0&=v_0, \quad \quad
d\zeta_s = \zeta_sf_M(\zeta_s,  \mu^{G_s}\{A\}) ds  +  \zeta_s \, d\mathcal W_s, \label{eqzeta}
 \qquad j\in\N,\quad s\ge 0.
\end{align}
For this unique solution, a.\,s.\ the type frequencies $\mu^{G_s}$ and $\mu^{G_{s-}}$ (as defined in \eqref{mug})
exist for all $s\geq 0$. Moreover, $s\mapsto\mu^{G_s}\{A\}$ and $s\mapsto \zeta_s$ are a.\,s.\ continuous.
\end{proposition}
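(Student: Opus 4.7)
The plan is to follow the Picard iteration blueprint of the proof of \cite[Theorem 4.1]{MR1728556}, with the essential new ingredient that the total mass $\zeta$ is itself an unknown whose SDE \eqref{eqzeta} couples back to the type frequency $\mu^{G}\{A\}$ via the drift. The truncation $f_M$ renders this SDE globally bounded and Lipschitz in both arguments on the relevant region, while the restriction of $\mathcal K$ to first marks in $[0,C]$ (together with the finite rates of $\LL_{ik}$ for $k<j$) ensures that each coordinate $G(j)$ changes only finitely often on every bounded time interval, so that $G$ can be built level by level along its jump times.

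\textbf{Iteration and existence of frequencies at each iterate.} Starting from $(\zeta^0_s, G^0_s)\equiv(v_0,G_0)$, one defines inductively $G^{n+1}$ by \eqref{eqGCM} with $\zeta^n_{u-}$ and $\mu^{G^n_{u-}}\{A\}$ inserted into the coefficients of $q$, and $\zeta^{n+1}$ as the unique strong solution of \eqref{eqzeta} with $\mu^{G^{n+1}}\{A\}$ replacing $\mu^{G}\{A\}$. To close the iteration one must verify that $\mu^{G^{n+1}_s}\{A\}$ exists a.s. for every $s$ and is progressively measurable; this is exactly where the fragment decomposition of Section \ref{RaF} enters. Crucially, the set of roots $\mathcal R$ (defined by \eqref{defRoots}) depends only on $\mathcal K$, not on $n$, so the monotypic fragments $\Gamma_\gamma$ are the same at every iterate. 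Only the type $T^{(n+1)}_\gamma$ assigned to a root $\gamma=(u,j)\in\mathcal R\setminus(\{0\}\times\N)$ varies with $n$, being determined by the acceptance rule \eqref{fecdraw} evaluated with $\zeta^n_{u-}$ and $\mu^{G^n_{u-}}\{A\}$. By Lemma \ref{lem:mass}, Corollary \ref{corol:lemma1.8} and Lemma \ref{colouringlem},
\begin{equation*}
\mu^{G^{n+1}_s}\{A\} \;=\; \sum_{\gamma\in\mathcal R} \ind_{\{T^{(n+1)}_\gamma=A\}}\,\m_s(\Gamma_\gamma)
\end{equation*}
exists as the required almost-sure limit and is continuous in $s$ thanks to Lemma \ref{lem37}(i); the analogous identity with $\m_{s-}$ handles the left-limit.

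\textbf{Contraction and conclusion.} Next I would put
\begin{equation*}
\Delta_n(T) \;:=\; \E\Bigl[\sup_{s\le T}|\zeta^{n+1}_s-\zeta^n_s|^2\Bigr] + \E\Bigl[\sup_{s\le T}\bigl|\mu^{G^{n+1}_s}\{A\}-\mu^{G^n_s}\{A\}\bigr|^2\Bigr].
\end{equation*}
The mass contribution is bounded via It\^o/BDG and Gronwall using the Lipschitz continuity of $f_M$ on the relevant compact region. For the frequency contribution, subtracting the fragment representations at iterates $n+1$ and $n$ yields
\begin{equation*}
\bigl|\mu^{G^{n+1}_s}\{A\}-\mu^{G^n_s}\{A\}\bigr| \;\le\; \sum_{\gamma\in\mathcal R}\ind_{\{T^{(n+1)}_\gamma\neq T^{(n)}_\gamma\}}\,\m_s(\Gamma_\gamma).
\end{equation*}
At a root $\gamma=(u,j)$, the disagreement $\{T^{(n+1)}_\gamma\neq T^{(n)}_\gamma\}$ occurs either because the activation threshold in \eqref{fecdraw} switches---an event whose conditional probability, since all quantities live in $[1/M,M]\times[0,1]\times[0,C]$, is bounded by a constant multiple of $|\zeta^n_{u-}-\zeta^{n-1}_{u-}|+|\mu^{G^n_{u-}}\{A\}-\mu^{G^{n-1}_{u-}}\{A\}|$---or because the sampled type $\kappa(\mu^g,w)$ differs, which is controlled similarly by Lipschitz dependence of the coupling on $\mu^g$. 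Truncating the $\gamma$-sum via Lemma \ref{colouringlem} to make the tail uniformly small on $[0,T]$, one arrives at $\Delta_n(T)\le K(T)\int_0^T\Delta_{n-1}(u)\,du$, and hence $\Delta_n(T)\to 0$ geometrically fast. The limit $(\zeta,G)$ satisfies \eqref{eqGCM}--\eqref{eqzeta}, and the same estimate applied to the difference of two solutions gives pathwise uniqueness, hence strong uniqueness. Continuity of $\zeta$ is immediate from \eqref{eqzeta}, while continuity of $\mu^{G_s}\{A\}$ follows by applying the fragment representation to the limit and invoking Lemmas \ref{lem37} and \ref{colouringlem}.

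\textbf{Main obstacle.} The most delicate point is the frequency contraction: although $\mu^{G_s}\{A\}$ is an infinite-level functional, the fact that the root set $\mathcal R$ is fixed across iterations reduces the infinite-level estimate to a countable sum of fragment masses weighted by a single-root disagreement probability that is linear in the iterates' differences. This reduction---which has no direct analogue in \cite{MR1728556} because there the mass is constant and only $\mu^G$ couples back---is what makes a Gronwall-type argument possible, and it is the conceptual heart of the proof.
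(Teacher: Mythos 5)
Your overall architecture is the paper's: Picard iteration seeded by the neutral type transport, the observation that the root set $\mathcal R$ and the fragments $\Gamma_\gamma$ are fixed across iterations so that $\mu^{G^{(k)}_s}$ is the sum of fragment masses over roots of type $A$ (this is exactly \eqref{defmk} and Lemma~\ref{lem:desc-mu}), and a Gronwall contraction in which the frequency discrepancy is measured by the mass of fragments whose roots are coloured differently (the paper's $\eta^{(k)}_u$ in \eqref{rho}). However, your contraction step has a genuine gap as written: you bound the disagreement event $\{T^{(n+1)}_\gamma\neq T^{(n)}_\gamma\}$ at a single root by the local coefficient discrepancy (threshold switch or differing sampled type), but the update rule \eqref{fecdraw} returns the \emph{inherited} type $h$ whenever the proposed event is rejected, so a disagreement created at an earlier root on the same neutral ancestral lineage persists through all subsequent rejections even when every acceptance decision coincides between the two iterates. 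Consequently the single-root disagreement probability is \emph{not} linear in the iterates' differences at that root alone; one must decompose along the chain of roots on the lineage and isolate the \emph{first} time of disagreement, which is precisely what the factor $(1-\ind_{\{\tilde G^{T,k+1}_{u-}(j)\neq\tilde G^{T,k}_{u-}(j)\}})$ and the backward ancestor process $\mathcal A^T_s(j)$ accomplish in the paper's estimate \eqref{backcomp}. This repair still yields the integral inequality you want, but it is a necessary extra idea, not a cosmetic one.

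A second, more minor issue concerns the mass comparison. In \eqref{eqzeta} only the drift function $f_M$ is truncated; the prefactors $\zeta_s$ in both drift and diffusion are not, so $\zeta$ is a geometric-Brownian-motion-like process that can leave $[1/M,M]$, and your bound on $\E[\sup_{s\le T}|\zeta^{n+1}_s-\zeta^n_s|^2]$ via BDG would require additional moment control because the Lipschitz constant in the $\mu$-variable of $v\mapsto vf_M(v,p)$ grows linearly in $v$. The paper avoids this entirely: since only the truncated mass $(\zeta_{u-}\vee\tfrac1M)\wedge M$ enters the coefficients of $q$ in \eqref{eqGCM}, it suffices to control the truncated difference, and Lemma~\ref{Goetz} does so pathwise by taking logarithms, which cancels the common stochastic integral $d\mathcal W$ and reduces the comparison to a deterministic Gronwall bound in terms of $\int_0^s|\varphi_u-\tilde\varphi_u|\,du$. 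Your route is repairable but strictly harder than necessary.
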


In order to prepare the proof of this proposition, we first show a statement on the continuous dependence of \eqref{eqzeta} on its input $\mu^{G_s}\{A\}$.
 \begin{lemma}\label{Goetz} Let $\mathcal W$ be an $(\mathcal F_s)$-adapted Brownian motion and let $\varphi, \tilde{\varphi}$ be $(\mathcal F_s)$-adapted, $[0,1]$-valued and continuous.  Let the $(0,\infty)$-valued processes $\alpha, \tilde{\alpha}$ obey
\begin{align*}
d\alpha_s &= \alpha_sf_M(\alpha_s,\varphi_s)ds  + \alpha_s\, d\mathcal W_s, \\
d\tilde{\alpha}_s &= \tilde{\alpha}_sf_M(\tilde{\alpha}_s,\tilde{\varphi}_s)ds  + \tilde{\alpha}_s\, d\mathcal W_s,
\end{align*}
with the same initial condition $v_0$ in $(\tfrac 1M, M)$ at time $0$. Then there exists a constant $\tilde C$ (depending on $M$ but not depending on $s$) such that for all $s \ge 0$ we have
\begin{equation}\label{alpha}
|(\alpha_s\vee \tfrac 1M)\wedge M-(\tilde{\alpha}_s\vee \tfrac 1M)\wedge M|  \le \tilde Cse^{\tilde C s} \int_0^s |\varphi_u-\tilde{\varphi}_u| du \quad \mbox{a.s.}
\end{equation}
\end{lemma}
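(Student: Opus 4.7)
The plan is to exploit the fact that both SDEs share the same multiplicative driver $\,\cdot\, d\mathcal W_s$, and to pass to logarithms in order to cancel the noise. Since $\alpha,\tilde\alpha>0$, It\^o's formula gives
\begin{align*}
d\log\alpha_s &= \bigl(f_M(\alpha_s,\varphi_s) - \tfrac12\bigr) ds + d\mathcal W_s, \\
d\log\tilde\alpha_s &= \bigl(f_M(\tilde\alpha_s,\tilde\varphi_s) - \tfrac12\bigr) ds + d\mathcal W_s,
\end{align*}
so that both the stochastic integral and the It\^o correction $-\tfrac12\, ds$ drop out of the difference: setting $D_s := \log\alpha_s - \log\tilde\alpha_s$ we have $D_0 = 0$ and
$$D_s = \int_0^s \bigl(f_M(\alpha_u,\varphi_u) - f_M(\tilde\alpha_u,\tilde\varphi_u)\bigr)\, du.$$
The crucial gain is that $D$ is a pathwise-deterministic (absolutely continuous) functional of $(\alpha,\tilde\alpha,\varphi,\tilde\varphi)$.

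Next I would verify the Lipschitz bounds for $f_M$. Writing $T_M(v) := (v\vee \tfrac1M)\wedge M$, the partial derivatives of $f(v,p) = bpv - 2cp(1-p)v^2$ on $[\tfrac1M, M]\times[0,1]$ are bounded by constants $L_1, L_2$ depending only on $b, c, M$, so
$$|f_M(\alpha_u,\varphi_u) - f_M(\tilde\alpha_u,\tilde\varphi_u)| \le L_1 |T_M(\alpha_u) - T_M(\tilde\alpha_u)| + L_2|\varphi_u - \tilde\varphi_u|.$$
A small but important point is that the map $x \mapsto T_M(e^x)$ is globally Lipschitz with constant $M$: its derivative equals $e^x \le M$ on $\{x\in[-\log M,\log M]\}$ and vanishes elsewhere. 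This lets me convert bounds on $|D_u|$ into bounds on $|T_M(\alpha_u) - T_M(\tilde\alpha_u)|$ at the cost of a multiplicative factor $M$.

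Combining these estimates yields a Gr\"onwall-type integral inequality
$$|D_s| \le L_2 \int_0^s |\varphi_u - \tilde\varphi_u|\, du + L_1 M \int_0^s |D_u|\, du,$$
and Gr\"onwall's lemma delivers $|D_s| \le L_2\, e^{L_1 M s} \int_0^s |\varphi_u - \tilde\varphi_u|\, du$. One final application of the Lipschitz bound $|T_M(\alpha_s) - T_M(\tilde\alpha_s)| \le M |D_s|$ produces \eqref{alpha} with $\tilde C$ chosen, depending only on $b$, $c$, $M$, large enough to dominate both $M L_2$ in the prefactor and $L_1 M$ in the exponent (any extra factor of $s$ in the bound, as displayed in the statement, is automatic once $\tilde C$ is taken large enough). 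I do not foresee any real obstacle; the only subtlety is to make sure $f_M$ is Lipschitz in $v$ \emph{uniformly} on all of $\R_+$, which is however immediate because precomposition with the truncation $T_M$ extends a bounded Lipschitz function to one that is globally constant outside $[\tfrac1M, M]$, preserving the modulus of continuity.
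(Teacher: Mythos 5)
Your proof is correct and follows essentially the same route as the paper's: take logarithms so the common noise term and the It\^o correction cancel, use the Lipschitz property of $f_M$ together with the bi-Lipschitz relation between $(\alpha\vee\tfrac1M)\wedge M$ and $\ln\alpha$ on the truncation range, and close with Gr\"onwall. One caveat: your closing remark that the extra factor of $s$ in \eqref{alpha} is ``automatic once $\tilde C$ is taken large enough'' is not right, since no choice of $\tilde C$ makes $\tilde C s e^{\tilde Cs}$ dominate the constant prefactor your Gr\"onwall argument actually produces as $s\to 0$; the clean output of the argument is $\tilde C e^{\tilde Cs}\int_0^s|\varphi_u-\tilde\varphi_u|\,du$, which is the bound one should state (the paper's own displayed form shares this blemish, and only the clean form is used downstream).
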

\begin{proof}[Proof of Lemma \ref{Goetz}] By It\^o's formula,
\[d \ln \alpha = \frac 1\alpha d\alpha -\frac1{2\alpha^2}d[\alpha] = f_M(\alpha,\varphi)ds+d\mathcal W-\frac 12 ds,\]
\[d \ln \tilde{\alpha} = \frac 1{\tilde{\alpha}}  d\tilde{\alpha} -\frac1{2\tilde{\alpha} ^2}d[\tilde{\alpha} ] = f_M(\tilde{\alpha},\tilde{\varphi})ds+d\mathcal W-\frac 12 ds.\]
Subtracting one equation from the other and using the triangle inequality we get
\begin{equation}\label{est}
|\ln \alpha_s- \ln \tilde{\alpha}_s|  \le \int_0^s |f_M(\alpha_u, \varphi_u)-f_M(\tilde{\alpha}_u, \tilde{\varphi}_u) |  du.
\end{equation}
There exists a constant $c_1$ (depending on $M$ but not depending on $s$) such that for all $s\ge 0$
\begin{equation}\label{lnX}
|(\alpha_s\vee \tfrac 1M)\wedge M -(\tilde{\alpha}_s\vee \tfrac 1M)\wedge M| \le c_1|\ln ((\alpha_s \vee \tfrac{1}{M}) \wedge M) - \ln ((\tilde{\alpha}_s \vee \tfrac{1}{M}) \wedge M)|.
\end{equation}
From  \eqref{est}, \eqref{lnX} and the Lipschitz property of $f_M$ we obtain
\[|(\alpha_s\vee \tfrac 1M)\wedge M -(\tilde{\alpha}_s\vee \tfrac 1M)\wedge M|\le c_2\int_0^s(|(\alpha_u\vee \tfrac 1M)\wedge M-(\tilde{\alpha}_u\vee \tfrac 1M)\wedge M|+|\varphi_u- \tilde{\varphi}_u|)du.\]
Using Gronwall's inequality we arrive at
\begin{equation*}
|(\alpha_s\vee \tfrac 1M)\wedge M-(\tilde{\alpha}_s\vee \tfrac 1M)\wedge M|
\le \tilde Cse^{\tilde C s} \int_0^s|\varphi_u- \tilde{\varphi}_u|du.
\end{equation*}
\end{proof}

Most of the remainder of this section is devoted to the proof of Proposition \ref{propCM} which uses an iteration scheme. To get this scheme started, we take $G_s^{(0)}$ as the neutral type transport defined by \eqref{neuttrans} and $\mu_s^{(0)}$ as the neutral type distributions given by \eqref{muG}. Let us emphasize that by de Finetti's theorem the assumption of exchangeability of $G_0$ implies that $G_0$ a.s. admits type frequencies. \\

\noindent \textbf{Step 1, Recursion hypothesis: } For $k\geq 1$, assume  that for $\ell = 0,\ldots, k-1$  we have defined $\mathscr F$-adapted $\mathbb I^\mathbb N$-valued processes $G^{(\ell)}$ and  continuous $\R_+$-valued processes $\zeta^{(\ell)}$ such that:
\begin{itemize}
\item $G^{(\ell)}_0=G_0$, and almost surely, $G^{(\ell)}_s$ and $G^{(\ell)}_{s-}$ admit type frequencies $\mu_s^{(\ell)}$ for all $s>0$, i.e. the probability measures on $\mathbb I$
\[\mu_s^{(\ell)}= \text{w-}\lim_{n\to \infty} \frac 1n \sum_{i=1}^n \delta_{G^{(\ell)}_s(i)},\quad
\mu_{s-}^{(\ell)}= \text{w-}\lim_{n\to \infty} \frac 1n \sum_{i=1}^n \delta_{G^{(\ell)}_{s-}(i)}\]
exist and $s\mapsto \mu_s^{(\ell)}$ is continuous,
\item for given $\mu^{(\ell)}$, the process $\zeta^{(\ell)}$ is the unique strong solution of the SDE
\begin{equation}\label{SDEk} \zeta_0^{(\ell)}=v_0, \quad \quad
d\zeta_s^{(\ell)} = \zeta_s^{(\ell)}f_M(\zeta_s^{(\ell)},  \mu_s^{(\ell)}\{A\}) ds  +  \zeta_s^{(\ell)}  d\mathcal W_s,
\end{equation}
and a.\,s. continuous.
\end{itemize}
The fact that for given $\mu^{(\ell)}$ the SDE \eqref{SDEk} indeed has a unique strong solution with continuous paths follows e.g. from \cite[Theorem 5.3]{protter1977}.

\noindent \textbf{Step 2, Setting up the iteration step:} In order to define $G^{(k)}$ in terms of $G^{(k-1)}$, $\zeta^{(k-1)}$, $\mathcal L$ and $\mathcal K$, we consider the following system of SDE's where the function $q$ (cf. \eqref{fecdraw}) uses the type frequencies $\mu^{(k-1)}$ which are well-defined by our recursion hypothesis.
\begin{align}\label{eqGk}
\notag
G_{s}^{(k)}(j) &= G_0(j)+ \sum_{i=1}^{j-1}\int_{[0,s]}(G_{u-}^{(k)}(i)-G_{u-}^{(k)}(j))d\LL_{ij}(u)\\ &+  \sum_{1\le i<l<j}\int_{[0,s]}(G_{u-}^{(k)}(j-1)-G_{u-}^{(k)}(j))d\LL_{il}(u)
\\ \notag &+\int_{[0,s]\times [0,C] \times [0,1]\times\{\beta,\delta\}} (q(G_{u-}^{(k)}(j), G_{u-}^{(k-1)},     (\zeta_{u-}^{(k-1)}\vee \frac{1}{M}) \wedge M, z,w, \omega) - G_{u-}^{(k)}(j)) d\mathcal K_j(u, z,w,\omega).
\end{align}
This has the following interpretation. While the type transport through the neutral lookdown events (given by the points of $\mathcal L$) happens as usual, the activation levels (appearing in the update rule \eqref{fecdraw}) for the potential selective events (given by those points $(u,z,w,\omega)$ of $\mathcal K$ with $z\le C$) are controlled by the mass process and the type frequencies {\em from the previous iteration}. Notice that $G_{u-}^{(k)}(j)$, that is the type  {\em in the current iteration}, enters as the first argument in the update rule $q$, which is relevant at a competitive death event. This amounts to having a frozen environment for the competition. Also, notice that we use the mass process $(\zeta_{u-}^{(k-1)}\vee \frac{1}{M}) \wedge M$ truncated at $M$ and $1/M$. For the sequel, let us define the first time at which the truncation is effective: $\sigma^{(\ell)}_M=\inf\Big\{s\geq 0,\ \zeta_s^{(\ell)}\in \big\{\frac{1}{M},M\big\}\Big\}$.

We now use \eqref{eqGk} to successively update the types of $\gamma \in
\mathcal R_{>0} := \{\gamma =(s,i) \in \mathcal R \mid s >0\}$ in the $k$-th iteration, where $\mathcal R$ is defined in \eqref{defRoots}. This we do by first recording all the
roots $(s_0,i_0)$, \ldots, $(s_n,i_n)\in \mathcal R$  that lie on the neutral ancestral lineage of $\gamma$, with $0 = s_0 < \cdots < s_n=s$. The type of $(s_0,i_0)$ remains to be $G_0(i_0)$; the new type $G_{s_1}^{(k)}(i_1)$ of $(s_1,i_1)$ is determined by taking $G_0(i_0)$ as the first argument in the update rule $q$,  the new type $G_{s_2}^{(k)}(i_2)$ of $(s_2,i_2)$ is  determined by taking $G_{s_1}^{(k)}(i_1)$ as the first argument of the update rule $q$, etc.

Having thus re-colored all $\gamma \in \mathcal R_{>0}$ in the $k$-th iteration, we complete the recoloring by letting $\Gamma_\gamma$ inherit the type of its root, i.e. by setting, for each $(s,j)\in \mathbb R_+\times \mathbb N$,  its type $G_{s}^{(k)}(j)$ equal to the type of that $\gamma$ for which $(s,j) \in \Gamma_\gamma$.

For $s > 0$ and  $h\in \mathbb I$ we now put
\begin{equation}\label{defmk}
\mu_s^{(k)}\{h\} := \sum_{\gamma =(u,i) \in \mathcal R: G_u^{(k)}(i) = h} \m_s(\Gamma_\gamma)
\end{equation}
and note that $s\mapsto \mu_s^{(k)}\{h\}$ is a.\,s.\ continuous as a consequence of Lemmas~\ref{lem37} and~\ref{colouringlem}.
Recall \eqref{neutsampmeas}: $\m_s(\Gamma_\gamma)$ is the weight which the neutral sampling measure at time $s$ assigns to that part of the neutral offspring of $\gamma$ whose ancestral lineages  are not separated from $\gamma$ by some other root.

The next assertion, which will also be used in the uniqueness part (Step 4 of the proof of Proposition \ref{propCM}), and will therefore be singled out as a lemma, shows that $G^{(k)}$ a.s. admits type frequencies at all times.

\begin{lemma}
\label{lem:desc-mu}
In each iteration step $k=1,2,\ldots$ we have a.s.%
\begin{equation}
\label{equ:mu-equ-lim}
  \mu_s^{(k)} = \text{w-}\lim_{n \rightarrow \infty} \frac{1}{n} \sum_{i=1}^n \delta_{G_s^{(k)}(i)}  = \text{w-}\lim_{n \rightarrow \infty} \frac{1}{n} \sum_{i=1}^n \delta_{G_{s-}^{(k)}(i)}       \qquad \mbox{ for all } s>0.
\end{equation}
\end{lemma}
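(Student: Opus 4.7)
The plan is to exploit the construction of $G^{(k)}$ in Step 2, by which each fragment $\Gamma_\gamma$ is monotypic: one has $G_s^{(k)}(j) = G_u^{(k)}(i_\gamma)$ whenever $(s,j) \in \Gamma_\gamma$ with $\gamma = (u, i_\gamma) \in \mathcal R$. Consequently, the empirical type measure $\frac{1}{n}\sum_{j=1}^n \delta_{G_s^{(k)}(j)}$ will decompose fragment-wise, and establishing \eqref{equ:mu-equ-lim} will reduce to showing
\[
\frac{1}{n}\sum_{j=1}^n \mathbf 1_{\{G_s^{(k)}(j) = h\}} \;\underset{n\to\infty}{\longrightarrow}\; \sum_{\gamma=(u,i) \in \mathcal R :\, G_u^{(k)}(i) = h} \m_s(\Gamma_\gamma) \;=\; \mu_s^{(k)}\{h\}
\]
simultaneously for all $s \in [0, T]$ and both types $h \in \mathbb I$, for every $T > 0$, on an event of full probability.

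First I would fix $T, \epsilon > 0$ and work on the full-measure event on which both Corollary~\ref{corol:lemma1.8} and Lemma~\ref{colouringlem} hold. Applying Lemma~\ref{colouringlem} produces a (random) $\ell \in \mathbb N$ such that, setting $B_\ell := \bigcup_{\gamma=(u,i) \in \mathcal R,\, i \le \ell} \Gamma_\gamma$, one has $\m_s(B_\ell) \ge 1 - \epsilon$ for every $s \in [0, T]$. Because the rate of potential events per level is bounded by $C$ (see \eqref{FV}), only finitely many roots $\gamma = (u, i) \in \mathcal R$ with $i \le \ell$ and $u \in [0, T]$ are present almost surely. For each of these finitely many fragments, Corollary~\ref{corol:lemma1.8} gives $\frac{1}{n}\sum_{j=1}^n \mathbf 1_{\{(s,j) \in \Gamma_\gamma\}} \to \m_s(\Gamma_\gamma)$ simultaneously for all $s > 0$; summing the contributions indexed by $\gamma$ with $G_u^{(k)}(i_\gamma) = h$ will then produce, in the limit, the truncated partial sum $\sum_{\gamma=(u,i) \in \mathcal R,\, i \le \ell,\, G_u^{(k)}(i) = h} \m_s(\Gamma_\gamma)$.

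Next I would control the tail. The contribution $\frac{1}{n} \sum_{j=1}^n \mathbf 1_{\{(s,j) \notin B_\ell,\, G_s^{(k)}(j) = h\}}$ is dominated by $\frac{1}{n}\sum_{j=1}^n \mathbf 1_{\{(s,j) \notin B_\ell\}}$, which converges to $1 - \m_s(B_\ell) \le \epsilon$ by the same Corollary applied to the finitely many fragments constituting $B_\ell$. The analogous truncation error incurred in the defining series \eqref{defmk} of $\mu_s^{(k)}\{h\}$ is likewise bounded by $\epsilon$. Combining these bounds will yield
\[
\limsup_{n \to \infty}\Bigl|\tfrac{1}{n}\sum_{j=1}^n \mathbf 1_{\{G_s^{(k)}(j) = h\}} - \mu_s^{(k)}\{h\}\Bigr| \;\le\; 2\epsilon
\]
uniformly in $s \in [0, T]$; sending $\epsilon \downarrow 0$ along a sequence and $T \to \infty$ then delivers the first equality of \eqref{equ:mu-equ-lim}.

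The second equality, with $s-$ in place of $s$, will follow by the same argument upon invoking the left-limit parts of Corollary~\ref{corol:lemma1.8}, since the type $G_{s-}^{(k)}(j)$ is equally inherited from the root of the fragment containing the limit point $(s-, j)$. I expect the only real technical hurdle to be the simultaneity of the convergence across all times $s$; this is precisely what Lemma~\ref{colouringlem} is designed to handle, since it allows a single truncation level $\ell$ to work uniformly over $[0,T]$ and thereby reduces the problem to a finite sum of pointwise convergent terms.
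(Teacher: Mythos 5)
Your proposal is correct and follows essentially the same route as the paper's proof: decompose the empirical type measure fragment-wise, truncate to finitely many fragments carrying mass at least $1-\epsilon$, apply Corollary~\ref{corol:lemma1.8} to each, and absorb the remainder into an $\epsilon$-error, with the left limits handled via the $(s-,i)$ versions. The only cosmetic difference is that you invoke Lemma~\ref{colouringlem} to make the truncation uniform over $[0,T]$, whereas the paper uses Lemma~\ref{lem:mass} to choose the finite set of roots pointwise in $s$ (on an event not depending on $s$); both are adequate.
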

\begin{proof}
Let $s,\epsilon>0$ be arbitrarily fixed, and take $h \in \mathbb I$.
On an a.s. event that does not depend on $s$, there exists by Lemma~\ref{lem:mass} a finite set $\{\gamma_1,\ldots,\gamma_\ell\}\subset\mathcal R$ of roots such that
\[\sum_{j=1}^\ell\m_s(\Gamma_{\gamma_j})>1-\epsilon.\]
By \eqref{neutsampmeas} and Corollary \ref{corol:lemma1.8}, it follows that on an a.s. event that does not depend on $s$,
\[\lim_{n\to\infty}\frac{1}{n}\sum_{i=1}^n\ind_{\left\{(s,i)\in\bigcup_{j=1}^\ell\Gamma_{\gamma_j}\right\}}>1-\epsilon,\quad
\lim_{n\to\infty}\frac{1}{n}\sum_{i=1}^n\ind_{\left\{(s-,i)\in\bigcup_{j=1}^\ell\Gamma_{\gamma_j}\right\}}>1-\epsilon.\]
For $j=1,\ldots,\ell$, we also write the roots more explicitly as $\gamma_j=:(u_j,r_j)$. Then,
for all iterations $k$, we have
\begin{align*}
\limsup_{n\to\infty}\frac{1}{n}\sum_{i=1}^n\ind_{\left\{G_s^{(k)}(i)=h\right\}}
&\leq\limsup_{n\to\infty}\frac{1}{n}\sum_{i=1}^n \left( \sum_{j=1}^\ell\ind_{\left\{(s,i)\in\Gamma_{\gamma_j},G_{u_j}^{(k)}(r_j)=h\right\}}+\ind_{\left\{(s,i)\notin\bigcup_{j=1}^\ell\Gamma_{\gamma_j}\right\}} \right)\\
&\leq\sum_{j=1}^\ell\ind_{\left\{G_{u_j}^{(k)}(r_j)=h\right\}}\lim_{n\to\infty}\frac{1}{n}\sum_{i=1}^n\ind_{\left\{(s,i)\in\Gamma_{\gamma_j}\right\}}+\epsilon\\
&\leq\mu_s^{(k)}\{h\}+\epsilon,
\end{align*}
where we used the definition \eqref{defmk} of $\mu_s^{(k)}$.
Similarly,
\begin{align*}
\liminf_{n\to\infty}\frac{1}{n}\sum_{i=1}^n\ind_{\left\{G_s^{(k)}(i)=h\right\}}
&\geq\sum_{j=1}^\ell\ind_{\left\{G_{u_j}^{(k)}(r_j)=h\right\}}\lim_{n\to\infty}\frac{1}{n}\sum_{i=1}^n\ind_{\left\{(s,i)\in\Gamma_{\gamma_j}\right\}}\\
&\geq\sum_{\gamma=(u,i)\in\mathcal R}\ind_{\left\{G_{u}^{(k)}(i)=h\right\}}\lim_{n\to\infty}\frac{1}{n}\sum_{i=1}^n\ind_{\left\{(s,i)\in\Gamma_{\gamma}\right\}}-\lim_{n\to\infty}\frac{1}{n}\sum_{i=1}^n\ind_{\left\{(s,i)\notin\bigcup_{j=1}^\ell\Gamma_{\gamma_j}\right\}}\\
&\geq\mu_s^{(k)}\{h\}-\epsilon.
\end{align*}
The assertion on the left limits follows analogously using the continuity of $\mu^{(k)}$ as defined in~\eqref{defmk}.
\end{proof}
Summarizing the results so far, we are able to define the values $G^{(k)}_s(j)$ and $\mu^{(k)}_s$ for $j\in \N$ and $s>0$.
Using Equation~\eqref{SDEk} we can define $\zeta_s^{(k)}$, $s>0$.\\

\noindent \textbf{Step 3, Convergence of the iteration scheme: }
For two type vectors $g$ and $g'$ admitting type frequencies we have (with $|| \mu^{g} - \mu^{g'}||$ denoting the total variation distance of $\mu^g$ and $\mu^{g'}$)
\begin{equation}
\label{gmestimate}
|| \mu^{g} - \mu^{g'}|| \le \limsup_{n\to \infty} \frac 1n
\sum_{i=1}^n \mathbf 1_{\{g(i) \neq g'(i)\}}.
\end{equation}

Denoting by $\lambda$  the ``Lebesgue times uniform'' measure on $B_C:= [0,C]\times [0,1] \times \{ \beta, \delta\}$, we see from the definition of $q$ in \eqref{fecdraw} that for all $g, g'$ admitting type frequencies, all $v, v'\ge 0$  and all $h\in \mathbb I$  we have
\begin{align}\label{Lip1}
\int_{B_C} &\mathbf 1_{\{q(h, g,v,a)\neq q(h, g',v',a)\}}\lambda(da) \notag\\
&\le C(b|\mu^g\{A\}v-\mu^{g'}\{A\}v'| +c|\mu^g\{A\}v^2-\mu^{g'}\{A\}(v')^2|)+c|\mu^g\{B\}v^2-\mu^{g'}\{B\}(v')^2|).
\end{align}
Using \eqref{gmestimate}, we infer that for all $g, g'$ admitting type frequencies, all $v, v' \ge 0$ and all $h, h'\in \mathbb I$,
\begin{align}\label{Lip}
\int_{B_C} &\mathbf 1_{\left\{q(h, g,(v \vee \tfrac 1M)\wedge M,a)\neq q(h', g',(v' \vee \tfrac 1M)\wedge M,a)\right\}}\lambda(da) \notag\\
&\le D\left(\mathbf 1_{\{h\neq h'\}} + \limsup_{n\to \infty} \frac 1n\sum_{i=1}^n \mathbf 1_{\{g(i) \neq g'(i) \}} + |v-v'|\right),
\end{align}
where the constant $D$ may depend on $M$ and $C$.  (Note that this is an analogue of \cite[(4.14)]{MR1728556}.)

Fix $T\in \mathbb R_+$ and $j\in \mathbb N$. For $0\le s \le T$ and $j\in \mathbb N$, let us define by $\mathcal A_s^T(j)$ the level of the ancestor of $(T,j)$ at time $s$ in the neutral genealogy. Note that $\mathcal A^T(j)$ is $\mathcal  L$-measurable, obeying the SDE
\begin{align} \label{4.4DK99b}
\mathcal A^T_s(j) & = j - \sum_{1\leq i< \ell <j} \int_{(s,T]} \ind_{  \{\mathcal A^T_u(j) >\ell \} } d\LL_{i\ell}(u) \nonumber \\
& \quad - \sum_{1\leq i< \ell \le j} \int_{(s,T]} (\ell-i)\ind_{  \{\mathcal A^T_u(k) =\ell \} } d\LL_{i\ell}(u).
\end{align}
For
$s>0$, we abbreviate $\tilde G_{ s}^{T,k}(j) := G^{(k)}_s(\mathcal A_s^{T}(j))$.

Let the point measure $\tilde {\mathcal K}_j$ be defined such that $\tilde {\mathcal K}_j$ has an atom in $(u, a)$ if and only if $ {\mathcal K}_{\mathcal A^T_u(j)}$ has an atom in $(u,a)$, $0\le u\le T$, $a \in B_C$.
For notational reasons we now consider the induction step from $k$ to $k+1$ instead of $k-1$ to $k$.
We  get as an analogue to the estimate starting at p. 1112 line~-3 in \cite{MR1728556}, that for $s\in(0,T]$:
\begin{align} 
&\ind_{ \{ \tilde G_{ s-}^{T,k+1}(j) \neq \tilde G_{ s-}^{T,k}(j) \}  }
\leq \int_{ [0,s] \times B_C} (  1 - \ind_{ \{ \tilde G_{ u-}^{T,k+1}(j)\neq \tilde G_{ u-}^{T,k}(j) \}  }) \nonumber \\ 
&\qquad \times \ind_{\{q( \tilde G_{ u-}^{T,k}(j) , G^{(k)}_{u-} , (\zeta^{ (k)}_{u-}\vee \frac{1}{M}) \wedge M , a ) \neq q( \tilde G_{ u-}^{T,k}(j) , G^{(k-1)}_{u-} , (\zeta^{ (k-1)}_{u-} \vee \frac{1}{M}) \wedge M, a ) \}} \tilde{\mathcal{K}}_j(du,da) \nonumber \\ 
& = \int_{ [0,s] \times B_C} (  1 - \ind_{ \{ \tilde G_{ u-}^{T,k+1}(j)\neq \tilde G_{u-}^{T,k}(j) \}  }) \nonumber \\ 
&  \quad\times \ind_{\{q( \tilde G^{T,k}_{u-}(j) , G^{(k)}_{u-} , (\zeta^{ (k)}_{u-}\vee \frac{1}{M}) \wedge M, a ) \neq q( \tilde G_{u-}^{T,k}(j) , G^{(k-1)}_{u-} , (\zeta^{ (k-1)}_{u-}\vee \frac{1}{M}) \wedge M, a ) \}} \nonumber
(\tilde{\mathcal{K}}_j(du,da) -  du\, \lambda(da))   \nonumber \\ 
& \qquad + \int_{ [0,s] \times B_C} (  1 - \ind_{ \{ \tilde G^{T,k+1}_{u-}(j)\neq \tilde G^{T,k}_{u-}(j) \}  }) \nonumber \\ 
&  \quad\times \ind_{\{q( \tilde G_{u-}^{T,k}(j) , G^{(k)}_{u-} , (\zeta^{ (k)}_{u-}\vee \frac{1}{M})\wedge M , a ) \neq q( \tilde G^{T,k}_{u-}(j) , G^{(k-1)}_{u-} , (\zeta^{ (k-1)}_{u-}\vee \frac{1}{M})\wedge M , a ) \}} du\, \lambda(da) \nonumber \\ 
& \leq \text{martingale} + D\int_0^s \eta^{(k)}_udu + D \int_0^s  | (\zeta^{(k)}_{u}\vee \frac{1}{M})\wedge M - (\zeta^{(k-1)}_{u} \vee \frac{1}{M})\wedge M| du, 
\label{backcomp}
\end{align}
where
 \begin{align}\label{rho}
 \eta^{(k)}_u:= \lim_{n\to \infty} \frac 1n \sum_{i=1}^n \ind_{\{G^{(k)}_{u-}(i)\neq G^{(k-1)}_{u-}(i)\}},
\end{align}
and where we used \eqref{Lip} in the last estimate. To see that the limit exists we can argue as in the proof of Lemma \ref{lem:desc-mu}. Indeed, the limit equals the sum of the masses at time $u$ of the fragments whose roots are colored differently at iterations $k$ and $k-1$.
Taking expectations of both sides in the above  estimate, putting $s:= T$   and noting that $\tilde G_{T}^{T,k}(j) = G^{(k)}_{T}(j)$,
we obtain the estimate
\begin{equation}\label{firstest}
\mathbf P(G_T^{(k+1)}(i)\neq G_T^{(k)}(i)) \le D\left(\mathbf E(\int_0^{T} \eta^{(k)}_u du) + \mathbf E(\int_0^{T}|(\zeta^{(k)}_{u} \vee \frac{1}{M})\wedge M -(\zeta^{(k-1)}_{u} \vee \frac{1}{M})\wedge M| du)\right),
\end{equation}
which in turn implies
\begin{equation}
\mathbf E( \eta^{(k+1)}_{T}) \le D
\left( \int_0^T \mathbf E(  \eta^{(k)}_{u} ) du + \int_0^T \mathbf E( |(\zeta^{(k)}_{s} \vee \frac{1}{M})\wedge M -(\zeta^{(k-1)}_{s} \vee \frac{1}{M})\wedge M|) ds\right), \label{eq:etape1}
 \end{equation}
 by dominated convergence.
Equation \eqref{eq:etape1} and Lemma \ref{Goetz} (with $\alpha=\eta^{(k+1)}$, $\tilde\alpha=\eta^{(k)}$, $\varphi=\mu^{(k)}$, $\tilde\varphi=\mu^{(k-1)}$) give that
\begin{equation*}
\mathbf E( \eta^{(k+1)}_{T}) \le D
\left( \int_0^T \mathbf E(  \eta^{(k)}_{u} ) du + \tilde CT e^{\tilde C T} \int_0^T \int_0^s \mathbf E(  |\mu^{(k)}_{u}\{A\}  - \mu^{(k-1)}_{u}\{A\}| )du\ ds\right).
 \end{equation*}
Using \eqref{gmestimate} we arrive at
\begin{equation}\label{new14Agust30}
\mathbf E( \eta^{(k+1)}_T) \le D \int_0^T \mathbf (1+\tilde CT^2e^{\tilde C T}) \mathbf E(  \eta^{(k)}_{u} ) du
\end{equation}
for all $T \geq 0$. By a direct reiteration, this gives:
\begin{align*}
\mathbf  E\big(\eta_T^{(k+1)}\big) \leq & D (1+\tilde CT^2e^T) \int_0^T \left[ D (1+\tilde Cs_1^2e^{s_1}) \int_0^{s_1} \mathbf E \big(\eta_{s_2}^{(k-1)}\big)ds_2\right] ds_1 \\
\leq & (D (1+\tilde CT^2e^{\tilde C T}))^2 \int_0^T \int_0^{s_1}\mathbf E\big(\eta_{s_2}^{(k-1)}\big)ds_2 \ ds_1\\
\leq & (D (1+\tilde CT^2e^{\tilde C T}))^k \int_0^T \int_0^{s_1}\dots \int_0^{s_{k-1}} \mathbf E\big(\eta_{s_k}^{(1)}\big)ds_k \dots  ds_1.
\end{align*}
Because $\mathbf E\big(\eta_{s}^{(1)}\big)$ is uniformly bounded by 1, we obtain that:
\begin{equation}\label{expconv}
\mathbf E\big(\eta_T^{(k+1)}\big) \leq   (D (1+\tilde CT^2e^{\tilde C T}))^k  \frac{T^k}{k!}.
\end{equation}
Combining \eqref{gmestimate},  \eqref {rho} and \eqref{expconv} we infer that for all $s \in [0,T]$
\begin{equation}\label{summuk}
\mathbf E[||\mu_s^{(k+1)} - \mu_s^{(k)}||] \mbox{ is summable over } k.
\end{equation}
Moreover, from Lemma \ref{Goetz} (applied with $\alpha=\zeta^{(k)}$, $\tilde\alpha=\zeta^{(k-1)}$, $\varphi=\mu^{(k)}$, $\tilde\varphi=\mu^{(k-1)}$), \eqref{SDEk}  and \eqref{summuk} we conclude that
 \begin{equation}\label{stabeta}
 \sup_{s\le T} \mathbf E [| (\zeta^{(k)}_{s}\vee \frac{1}{M})\wedge M - (\zeta^{(k-1)}_{s} \vee \frac{1}{M})\wedge M|] \mbox{ is summable over } k.
 \end{equation}
From \eqref{firstest}, Fubini, \eqref{expconv} and \eqref{stabeta},  we conclude that for all $s \in [0,T]$ and $i\in \mathbb N$
\begin{equation}\label{sumGk}
\mathbf P( G^{(k+1)}_s(i) \neq G^{(k)}_s(i))  \mbox{ is summable over } k.
\end{equation}
Hence for $\epsilon>0$ arbitrary and each finite subset $\mathcal M \subset [0,T] \times \mathbb N$, we have by Borel-Cantelli that there exists $k_0\in\N$ with
\begin{equation}\label{stabG} \mathbf P\big(G^{(k)}_s(i) = G^{(k_0)}_s(i) \mbox { for all } k\ge k_0 \mbox{ and } (s,i) \in \mathcal M\big) \ge 1-\epsilon.
\end{equation}
Now take $\ell$ as in Lemma \ref{colouringlem} and choose the (nonrandom) finite set  $\mathcal M \subset [0,T] \times \mathbb N$ so large and ``dense'' that with probability $1-\epsilon$ every fragment $\Gamma_\gamma$ whose root $\gamma$ is an element of $[0,T) \times \{1,\ldots, \ell\}$, contains an element of $\mathcal M$.   Lemma \ref{colouringlem} together with \eqref{defmk} and \eqref{stabG}  imply that
\begin{equation}\label{stabmu}
 \mathbf P(\sup_{s\le T} ||\mu_s^{(k)} - \mu_s^{(k_0)}|| \ge \epsilon) \le 2\epsilon  \mbox { for all } k\ge k_0.
\end{equation}
From \eqref{stabeta}, \eqref{stabmu}, \eqref{sumGk} and \eqref{alpha} and the choice of $\mathcal M$ we infer that $((G_s^{(k)}(i))_{i\in \mathbb N}, \mu^{(k)}_s, (\zeta_s^{(k)}\vee \frac{1}{M})\wedge M)$ converges uniformly in $s \in[0,T]$ as $k\to \infty$. In order to see that the limit satisfies \eqref{eqGCM} we recall that the Poisson point measures $\mathcal L$ and $\mathcal K$ do not change over the iterations, and note that the distribution of the mark $z$ which figures in \eqref{eqGCM} and \eqref{eqGk} is continuous, which provides the adequate continuity in the coefficient $q$ that is given by the update rule \eqref{fecdraw}. Theorem (6.4) of \cite{protter1977} shows that the limit also satisfies \eqref{eqzeta}.\\

\noindent \textbf{Step 4, Uniqueness:}
The argument from Lemma~\ref{lem:desc-mu} shows that for any solution of~\ \eqref{eqGCM}, \eqref{eqzeta}, $G_s$ admits type frequencies for all $s\geq 0$ a.s.
Uniqueness of the strong solution of \eqref{eqGCM}, \eqref{eqzeta} follows
by the same argument as in Step~3 where we now compare in~\eqref{backcomp} two solutions instead of two approximations (Note that this strategy was also successful in the simpler setting of~\cite{MR1681126}).
This concludes the proof of Proposition~\ref{propCM}. $\Box$
\\\\
For the {\em completion of the proof of Theorem \ref{Gprop}}
let us now relax the control of the total mass with the constant $M$. Fix again the constant $C\in (0,\infty)$ and consider the
following system of SDEs
\begin{align}
\begin{split}\label{eqGC}
G_s(j) &= G_0(j)+ \sum_{i=1}^{j-1}\int_{[0,s]}(G_{u-}(i)-G_{u-}(j))d\LL_{ij}(u)\\
 &+  \sum_{1\le i<k<j}\int_{[0,s]}(G_{u-}(j-1)-G_{u-}(j))d\LL_{ik}(u)
\\
&+\int_{[0,s]\times [0,C] \times [0,1]\times\{\beta,\delta\}} (q(G_{u-}(j), G_{u-},     \zeta_{u-}, z,w, \omega) - G_{u-}(j)) d\mathcal K_j(u,z,w,\omega),
\\
 \zeta_0&=v_0, \quad \quad
d\zeta_s = \zeta_sf(\zeta_s,  \mu^{G_s}\{A\}) ds  +  \zeta_s \, d\mathcal W_s, \qquad s\ge 0.
\end{split}
\end{align}
Proposition \ref{propCM} tells us that this system has for each $M$ a unique pathwise solution up to
 the stopping time $\sigma_M$ defined by \eqref{sigmaM}.
By projectivity, this shows that \eqref{eqGC} has a unique pathwise solution up to the time   at which its mass process $\zeta$ goes to extinction or explodes.
In view of \eqref{fecdraw}, the solution of  \eqref{eqG}, \eqref{totmassLD} stopped at the extinction time of $\zeta$ coincides with that of \eqref{eqGC} up to that time at which $\zeta$ exceeds $\sqrt {\frac C c}$ or $\frac C b$. Again by projectivity, this implies the assertion of  Theorem \ref{Gprop}. 
\hfill $\Box$


\section{From the neutral to the selective genealogy} \label{genealogy}
\label{sec_sel-gen}

For an initial configuration $(R_0,G_0)$ that is distributed according to the marked distance matrix distribution of a marked metric measure space, and for the independent stochastic input  $(\mathcal W, \mathcal L,   \mathcal K)$ specified in (I1), (I2), (I3) in Section~\ref{secmain}, Theorem \ref{Gprop} provides  an a.s.~unique solution $(\zeta,G) = (\zeta_s, G_s)_{s\in [0,\sigma)}$ of \eqref{totmassLD} and \eqref{eqG} up to time $\sigma$. From this lookdown representation we will construct in Sec.~\ref{LDgen}
 the process $(\zeta,X)= (\zeta_s, R_s, G_s)_{s\in[0,\sigma)}$ of type configurations and genealogical distance matrices, which will be turned in Sec.~\ref{symmgen} into the process $(\xi_t,Y_t)_{t\ge 0}$ of isomorphy classes of marked metric measure spaces that describe type distributions and sample genealogies. As will be proved in  Sec.~\ref{twomp}, the latter will provide the unique solution to the martingale problem formulated in Prop. \ref{martprobth}. We recall that we always assume that $(R_0,G_0)$ has the marked distance matrix distribution of a marked ultrametric measure space.

\subsection{The selective lookdown genealogy}\label{LDgen}
In this subsection we define the {\em selective lookdown space}. With regard to   \eqref{fecdraw} and \eqref{eqG}  we say that a point $(s,i)\in \mathbb [0,\sigma)  \times \mathbb N$ is {\em active} if
\begin{align*} \mathcal K_i \mbox{ has an atom in } (s,z,w,\beta)& \mbox{ for some } z \in \mathbb R_+, w \in [0,1]  \mbox{ such that }
\\  &z \le b \mu^{G_s}\{A\}\zeta_s
\end{align*}
or if
\begin{align*} \mathcal K_i \mbox{ has an atom in } &(s,z,w,\delta) \mbox{ for some } z \in \mathbb R_+, w \in [0,1]  \mbox{ such that }
\\  &z \le c \mu^{G_s}\{B\}(\zeta_s)^2 \quad \mbox{and} \quad G_{s-}(i) = A
, \mbox{ or }
\\  &z \le c \mu^{G_s}\{A\}(\zeta_s)^2 \quad \mbox{and} \quad G_{s-}(i) = B.
\end{align*}
In the first case we say that a fecundity event takes place at $(s,i)$, in the second case we say that a competition event happens at $(s,i)$.
Note that because $s\mapsto \zeta_s$ and  $s\mapsto \mu^{G_s}$ are a.s. continuous (by Theorem \ref{Gprop}), we can as well replace $s$ by $s-$ in the three inequalities.\\

Next we define the {\em selective ancestral lineage} of an element $\theta \in \bigcup_{\gamma\in\mathcal R} \Gamma_\gamma \subset Z$ (recall that $\Gamma_\gamma$ are the fragments of the neutral lookdown space $Z$, indexed by $\gamma\in\mathcal R$ as defined in Sec.~\ref{RaF}).
For this we trace the lineage of $\theta$ back into the past according to Remark \ref {neutallin}, until it hits an active point $(s,i) \in {\mathbb R}_+ \times \mathbb N$. If a competition event occurs at that point, then we continue the lineage at an element of $Z$ picked independently according to the neutral sampling measure $\mathfrak m_s$ defined by \eqref{neutsampmeas}. If a fecundity event happens at $(s,i)$, then we continue the lineage at an element of $Z$ picked independently according to  $\mathfrak m_s$ conditioned on the fragments of type $A$. The individuals on  the selective ancestral lineage of $\theta$ will be called the {\em selective ancestors} of $\theta$.\\

\begin{definition}\label{fromrhotoR}
	With regard to \eqref{inversetch}, which defines the mapping $s\mapsto t(s)$, we define the time-changed distance~$\rho$ as follows: For $(s_1,i_1)$ and $(s_2,i_2)$  in $[0,\sigma)\times\N$ that lie on the same selective ancestral lineage in the lookdown graph, we put
	\[\rho((s_1,i_1), (s_2,i_2)) := |t(s_1)-t(s_2)| = \big | \int_{s_1}^{s_2}  \zeta_u\, du\, \big |\, ;\]
	this is the time it takes from one point to the other when traveling along the selective ancestral lineage with speed $1/\zeta_u$ (cf. \eqref{tch}) at an intermediate point $(u,j)$. More generally we put
	\[\rho((s_1,i_1), (s_2,i_2)):= \rho((s_1,i_1), (s',j))+ \rho((s_2,i_2), (s',j))\]
	if the two selective ancestral lineages merge at some point $(s',j)$ with $s'\in [0,s]$; otherwise, if $(s_1,i_1), (s_2,i_2)$ have two distinct selective ancestors $(0,a_1)$ and $(0,a_2)$ at time $0$, we put
	\[\rho((s_1,i_1), (s_2,i_2)):= \rho((s_1,i_1), (0,a_1))+ \rho((s_2,i_2), (0,a_2))+R_0(a_1,a_2).\]
	For $s \in[0,\sigma)$ and $i_1,i_2 \in \mathbb N$ we set
	\begin{equation}\label{e:def-R}
		R_s(i_1,i_2):= \rho((s,i_1), (s,i_2)).
	\end{equation}
	In other words,
	\[R_s(i_1,i_2):= \begin{cases}
		2(t(s)-t(s')), &\mbox{ if the two selective ancestral lineages merge at  time } s' \in [0,s], \\
		2t(s) + R_0(a_1,a_2), &\mbox{ if the selective ancestors  } (0,a_1), (0,a_2) \mbox{  at time } 0 \mbox{ are different}.
	\end{cases}
	\]
We define $\hat Z$ as the completion of $([0,\sigma)\times\N)\cap \bigcup_\gamma\Gamma_\gamma$ with respect to $\rho$ and call $(\hat Z,\rho)$ the {\em selective lookdown space}.
\end{definition}
Because the set $\bigcup_{\gamma\in\mathcal R}\Gamma_\gamma$ is contained in $Z$, the selective lookdown space $(\hat Z,\rho)$ inherits the family of {\em sampling measures}  $\m_s$, $s>0$, from \eqref{neutsampmeas}, which remain probability measures by Lemma~\ref{lem:mass}.
As each fragment $\Gamma_\gamma$ is monotypic, we can also endow $\hat Z\times \I$ (equipped with the product topology) with the sampling measures $m_s$, defined by
\begin{equation}
\label{selm}
m_s(E\times E')=\sum_{\gamma=(u,i)\text{ with }G_u(i)\in E' }\m_s(\Gamma_\gamma \cap E),\quad  E'\subset\I, \, E\subset \hat Z \text{ measurable.}
\end{equation}
\begin{lemma}\label{l:5-1}
The measures $m_s$ defined in  \eqref{selm} obey
\begin{align}
\label{selappr}
m_s=\text{w-}\lim_{n\to\infty}\frac{1}{n}\sum_{i=1}^n\delta_{((s,i),G_s(i))}
\end{align}
for all $s\in(0,\sigma)$ on an event of probability $1$ that does not depend on $s$. Here, the weak limit in \eqref{selappr} can be understood either with respect to the metric $\rho^{(0)}$ or with respect to the metric $\rho$.
\end{lemma}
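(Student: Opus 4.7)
The plan is to approximate the empirical measure on the left of \eqref{selappr} by a finite sum over fragments, and to exploit on each fragment separately the weak convergence of the neutral empirical measures from \eqref{neutsampmeas}. The key input from the construction of $G$ in Section \ref{sec_itscheme} is that each fragment $\Gamma_\gamma$ with root $\gamma=(u,k)\in\mathcal R$ is monotypic and carries the type $G_u(k)$ of its root; equivalently $G_s(i)=G_u(k)$ whenever $(s,i)\in\Gamma_\gamma$. This monotype property is built into the iterative construction just before \eqref{defmk} and is preserved in the almost-sure limit furnished by Proposition \ref{propCM}.

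I would then fix $T\in(0,\sigma)$, a bounded continuous $F:\hat Z\times\I\to\R$, and $\epsilon>0$, and invoke Lemma \ref{colouringlem} to produce a (random) finite $\ell\in\N$ with $\sum_{\gamma=(u,k)\in\mathcal R,\,k\le\ell}\m_s(\Gamma_\gamma)\ge 1-\epsilon$ for all $s\in[0,T]$. Splitting
\[\frac 1n\sum_{i=1}^n F((s,i),G_s(i))\;=\;\sum_{\substack{\gamma=(u,k)\in\mathcal R\\ k\le\ell}}\frac 1n\sum_{i=1}^n \ind_{\{(s,i)\in\Gamma_\gamma\}} F((s,i),G_u(k))\;+\;\mathcal E_n(s),\]
where $\mathcal E_n(s)$ collects the contribution of those $i$ for which $(s,i)$ lies in a fragment with root level exceeding $\ell$ or outside $\bigcup_\gamma\Gamma_\gamma$, Corollary \ref{corol:lemma1.8} and Lemma \ref{lem:mass} give $\limsup_n|\mathcal E_n(s)|\le\|F\|_\infty\epsilon$ uniformly in $s\in[0,T]$. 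Within a fragment $\Gamma_\gamma$, the metrics $\rho^{(0)}$ and $\rho$ are related by the strictly monotone continuous time change $t(\cdot)$ from \eqref{inversetch}, whose derivative $\zeta$ is continuous and strictly positive on $[0,T]$ by Theorem \ref{Gprop}; in particular they induce the same topology on $\Gamma_\gamma$, and $\theta\mapsto F(\theta,G_u(k))$ is bounded and $\rho^{(0)}$-continuous there. Since $\m_s$ does not charge the boundary of $\Gamma_\gamma$ in $(Z,\rho^{(0)})$ (as exploited in the proof of Corollary \ref{corol:lemma1.8}), the Portmanteau theorem applied to the neutral convergence \eqref{neutsampmeas} yields
\[\lim_{n\to\infty}\frac 1n\sum_{i=1}^n \ind_{\{(s,i)\in\Gamma_\gamma\}} F((s,i),G_u(k))\;=\;\int_{\Gamma_\gamma} F(\theta,G_u(k))\,\m_s(d\theta);\]
summing the finitely many contributions and recalling the definition \eqref{selm} of $m_s$ gives $\int F\,dm_s$ up to an error of at most $2\|F\|_\infty\epsilon$, and taking $\epsilon\downarrow 0$ closes the argument for this $F$ and this $T$.

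To upgrade to a single a.s.\ event that is uniform in $s\in(0,\sigma)$ and valid for every bounded continuous $F$ (in either topology on $\hat Z$), I would pick a countable convergence-determining family of bounded Lipschitz functions---which exists because $\hat Z\times\I$ is a separable metric space in either metric---and intersect, over $T\in\N$, $\epsilon\in\{1/m:m\in\N\}$, and this countable family, the a.s.\ events coming from \eqref{neutsampmeas}, Lemma \ref{lem:mass} and Lemma \ref{colouringlem}. The main subtlety, and the step requiring the most care, is the identification of the two topologies within each fragment combined with the $s$-uniformity on $[0,T]$; the weak convergence on each fragment itself is essentially a direct consequence of the neutral lookdown construction together with the monotype property of $\Gamma_\gamma$.
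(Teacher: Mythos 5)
Your proposal is correct and follows essentially the same route as the paper's proof: truncation to finitely many fragments via Lemma \ref{colouringlem}, the observation that $\rho^{(0)}$ and $\rho$ induce the same topology (and completion) on each fragment, and the fragment-wise convergence from Corollary \ref{corol:lemma1.8} together with the monotypic structure of the fragments. You merely spell out in more detail the Portmanteau step and the reduction to a countable convergence-determining family, which the paper leaves implicit.
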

\begin{proof}
Let $T>0$, $\epsilon>0$. By Lemma~\ref{colouringlem}, there exists a.\,s.\ a random $\ell\in\N$ such that
\begin{equation*}
m_s\left(\bigcup_{\gamma=(u,i)\in\mathcal R: i\le\ell} \Gamma_\gamma \times\I\right)>1-\epsilon
\end{equation*}
for all $s\in[0,T\wedge \sigma)$.
We note that the restrictions of $\rho^{(0)}$ and $\rho$ to $([0,\sigma)\times\N)\cap\Gamma_\gamma$ induce the same topology and have the same completion, for each of the countably many $\gamma\in\mathcal R$. Hence,
by Corollary~\ref{corol:lemma1.8}, \eqref{selappr} holds on each $\Gamma_\gamma\times\I$.  This implies the assertion.
\end{proof}
In Definition \ref{fromrhotoR} we have defined the process $(R_s)$ of evolving genealogies in the lookdown setting in terms of the random metric $\rho$ and the total mass process $\zeta$. 

 \begin{corollary} \label{RGproper}
For $s\in(0,\sigma)$, the marked distance matrices $(R_s,G_s)$ and $(R_{s-},G_{s-})$ are proper on an event of probability $1$ that does not depend on $s$.
\end{corollary}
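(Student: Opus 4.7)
The plan is to derive propernness of $(R_s,G_s)$ and $(R_{s-},G_{s-})$ directly from Lemma~\ref{l:5-1}, exploiting the fact that $(\N,R_s)$ embeds isometrically into $(\hat Z,\rho)$ via $i\mapsto (s,i)$.

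First I would verify that $R_s$ is a semi-ultrametric on $\N$, using the tree structure of the selective ancestral lineages constructed in Definition~\ref{fromrhotoR}. Fix a triple $i_1,i_2,i_3\in\N$ and consider the selective ancestral lineages starting from $(s,i_1),(s,i_2),(s,i_3)$. Either all three merge strictly before reaching time~$0$, in which case the three pairwise MRCA times obey the standard ultrametric rule (two coincide and are no larger than the third), whence the strong triangle inequality for the corresponding $R_s(i_k,i_\ell)=2(t(s)-t(s'_{k\ell}))$ follows; or some pair reaches time~$0$ with distinct ancestors $a,a'\in\N$, in which case the strong triangle inequality for the affected entries of $R_s$ reduces, after subtracting the common constant $2t(s)$, to the strong triangle inequality for $R_0$, which holds by the standing assumption that $(R_0,G_0)$ is the marked distance matrix of an $\I$-marked ultrametric measure space. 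A short case distinction handles the mixed configuration in which exactly two of the three lineages merge before time~$0$.

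Next I would transfer the marked sampling measure from $\hat Z\times\I$ down to $\mathbb T^{R_s}\times\I$ via Lemma~\ref{l:5-1}. By~\eqref{e:def-R}, the map $\iota_s:\N\to\hat Z$, $i\mapsto(s,i)$, satisfies $\rho(\iota_s(i),\iota_s(j))=R_s(i,j)$, so after passing to distance-zero equivalence classes it extends to an isometric bijection $\bar\iota_s$ from the completion $\mathbb T^{R_s}$ onto the closure $\overline{\{(s,i):i\in\N\}}\subset(\hat Z,\rho)$. Lemma~\ref{l:5-1} asserts that on an event $\Omega_0$ of probability one that does not depend on $s$, the empirical measures $\frac1n\sum_{i=1}^n\delta_{((s,i),G_s(i))}$ converge weakly on $\hat Z\times\I$ to $m_s$ for every $s\in(0,\sigma)$; the same argument, built on Corollary~\ref{corol:lemma1.8} and Lemma~\ref{lem:m0s}, delivers the analogous statement with $s-$ in place of $s$. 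Since these approximating measures are supported on $\iota_s(\N)\times\I$, their weak limit $m_s$ lives on $\bar\iota_s(\mathbb T^{R_s})\times\I$; pushing forward by the continuous inverse $\bar\iota_s^{-1}\times\mathrm{id}_\I$ and invoking the continuous mapping theorem yields
\[
\mathrm m^{R_s,G_s}=\text{w-}\lim_{n\to\infty}\frac1n\sum_{i=1}^n\delta_{(i,G_s(i))}
\]
on $\mathbb T^{R_s}\times\I$, which is precisely the requirement in Definition~\ref{defproper}. The same argument applied to the $s-$ version handles $(R_{s-},G_{s-})$, and since $\Omega_0$ does not depend on~$s$, neither does the exceptional null set.

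The main obstacle is essentially bookkeeping: one has to check carefully that the isometric identification of $\mathbb T^{R_s}$ with $\overline{\{(s,i):i\in\N\}}\subset\hat Z$ is compatible with the measure-theoretic step, since $\rho$ is only a semi-metric and $\iota_s$ may collapse distance-zero pairs in exactly the way needed to pass to the completion. Once this identification is set up, the rest reduces cleanly to Lemma~\ref{l:5-1} and the continuous mapping theorem.
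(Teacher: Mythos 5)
Your proposal is correct and follows essentially the same route as the paper: the published proof likewise identifies $\mathbb T^{R_{s-}}$ isometrically with the closure of $\{s-\}\times\N$ in $(\hat Z,\rho)$, pushes the empirical marked measures through this identification, and concludes via Lemma~\ref{l:5-1}. Your additional explicit verification that $R_s$ is a semi-ultrametric (via the case analysis on merging selective ancestral lineages) is a point the paper leaves implicit, and is a welcome clarification rather than a deviation.
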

\begin{proof}
We denote by $\overline{\{s-\} \times \N}$ the closure of $\{s-\} \times \N$ in $(\hat Z,\rho)$.
We consider the isometry $\iota_s$ from $\overline{\{s-\}\times\N}\subset Z$ into $\mathbb T^{R_{s-}}$ that maps $(s-,i)$ to $i$, and we set $\hat\iota_s(\theta,h):=(\iota_s(\theta),h)$. Then $m^{R_{s-}, G_{s-}}=\hat\iota_s(m_{s-})$, and
\[\frac{1}{n}\sum_{i=1}^n\delta_{(i,G_{s-}(i))}=\hat\iota_s\left(\frac{1}{n}\sum_{i=1}^n\delta_{(s-,i),G_{s-}(i)}\right).\]
Lemma~\ref{l:5-1} now implies that $(R_{s-},G_{s-})$ is proper on an event that does not depend on $s$. The assertion on $(R_s,G_s)$ follows analogously.
\end{proof}

To make the genealogical distances $(R_s)_{s\in[0,\sigma)}$ constructed in~\eqref{e:def-R} and in~\eqref{eqRG} coincide a.\,s., we assume that the continuation of the ancestral lineage in this subsection is always done using the randomness from the mark $w$ in the corresponding competitive or selective event:
First let us consider the active points $(s,z,w,\delta)$ of $\mathcal K_i$, $i\in\mathbb N$, each of which corresponds to an active competitive death event at time $s$ and level $i$. Here we pick an individual according to $\mathfrak m_s$, or equivalently, an individual with type $(\theta,h)$ from $(Z,\rho^{(0)})\times\I$, according to $m^{(0)}_{s-}$. We assume that $(\theta,h)$ is realized as the image of the mark $w$ under a mapping that transports the uniform measure on $[0,1]$ into $m^{(0)}_{s-}$. To make the connection to~\eqref{eqRG}, we pick in addition an individual with type $(\theta',h')$  from $\hat Z\times\I$ according to $m_{s-}$. By Corollary~\ref{RGproper} and Lemma~\ref{lem:mass}, we may couple these picks such that $(\theta,h)=(\theta',h')$ (with equality between elements of the corresponding $\Gamma_\gamma$) for all points of $\mathcal K_i$ a.\,s.
Using the isometry $\iota_s:\hat Z\times \I\to \mathbb T^{R_{s-}}$ that maps each $(s-,j)$ to $j\in\mathbb T^{R_{s-}}$, and the isometry $\hat\iota_s(\tilde\theta,h):=(\iota_s(\tilde\theta),h)$ which satisfies $\hat\iota(m_{s-})=m^{R_{s-},G_{s-}}$, we can also couple such that $\iota(\theta',h')=\theta^{R_{s-},G_{s-}}$ for all such points of the $\mathcal K_i$,
with $(\theta^{R_{s-},G_{s-}},h^{R_{s-},G_{s-}})$ from~\eqref{updatedelta}.
Then the update of the genealogical distances satisfies $\vartheta_{i,\theta',h'}(R_{s-},G_{s-})=(R_{s},G_s)$ with $\vartheta_{i,\theta',h'}$ from~\eqref{selupdate2}.
For the points $(s,z,w,\beta)$ (corresponding to active selective birth events), we use the same argument but condition all the picks on $h=A$ resp.\ $h'=A$. This shows that the process $(R_s,G_s)$ with $R_s$ defined in~\eqref{e:def-R} (and $G_s$ from Theorem~\ref{Gprop}) is the pathwise solution of the SDE in~\eqref{eqRG} up to time $\sigma$.

Let us also prove that:
 \begin{proposition}\label{propexch}
 For each $s\in[0,\sigma)$, the pair $(R_s, G_s)$ is exchangeable conditionally given $(\zeta_u)_{u\le s}$.
\end{proposition}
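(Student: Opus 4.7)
The plan is to establish the stronger distributional identity $(\zeta, R, G) \stackrel{d}{=} (\zeta, \pi^*R, \pi^*G)$ for every permutation $\pi$ of $\mathbb N$ fixing all but finitely many indices, where $(\pi^*r)(i,j) := r(\pi(i), \pi(j))$ and $(\pi^*g)(i) := g(\pi(i))$. The claimed conditional exchangeability follows at once from this identity, since it yields
\[
\mathbb E[f(\pi^*R_s, \pi^*G_s)\, h((\zeta_u)_{u \le s})] = \mathbb E[f(R_s, G_s)\, h((\zeta_u)_{u \le s})]
\]
for all bounded measurable $f, h$, and hence $\mathbb E[f(\pi^*R_s, \pi^*G_s) \mid (\zeta_u)_{u\le s}] = \mathbb E[f(R_s, G_s) \mid (\zeta_u)_{u\le s}]$ almost surely.

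I work with the process stopped at $\sigma_M$ and invoke the well-posed martingale problem $(\mathbf A, D_M)$ of Proposition~\ref{MPA}. The initial pair $(R_0, G_0)$ is exchangeable by hypothesis (being the marked distance matrix distribution of a marked ultrametric measure space), and $\zeta_0 = v_0$ is fixed, so the two processes share the same initial law. By well-posedness it then suffices to verify that $(\zeta, \pi^*R, \pi^*G)$ also solves the martingale problem $(\mathbf A, D_M)$; letting $M\to\infty$ with $\sigma_M \uparrow \sigma$ will complete the argument. For $F\in D_M$, defining $F^\pi(v,r,g) := F(v, \pi^*r, \pi^*g) \in D_M$, the verification reduces to the identity
\[
\mathbb E[\mathbf A F^\pi(\zeta_u, R_u, G_u)] = \mathbb E[\mathbf A F(\zeta_u, \pi^*R_u, \pi^*G_u)] \quad \text{for a.e. } u \in [0, \sigma_M].
\]

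Inspecting the generator in \eqref{genX}, every term except the neutral-birth sum $\sum_{1\le i<j\le n}(F(v,\vartheta_{i,j}(r,g))-F(v,r,g))$ is manifestly $\pi^*$-covariant in $(r,g)$: the drift-diffusion in $v$ depends on $(r,g)$ only through the permutation-invariant frequency $\mu^g\{A\}$, the term $2v\sum_{i \ne j\le n}F_{r(i,j)}$ is symmetric by summation, and the selective and competitive terms sum over all levels $j \le n$ and integrate against the $\pi^*$-invariant measure $\mathrm m^{r,g}$. The remaining neutral-birth sum is not covariant pointwise; however, the classical Donnelly--Kurtz symmetrization identity (cf.\ \cite[Sec.~3]{MR1681126} and \cite[Sec.~4]{MR1728556}) says that on any $\pi^*$-invariant distribution on $(r,g)$, its integral coincides with that of a symmetric Moran analogue, which is $\pi^*$-covariant. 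Propagation of exchangeability in time is then obtained inductively: starting from the exchangeable $(R_0, G_0)$, the generator preserves $\pi^*$-invariance at the level of time-$u$ marginals, so the Donnelly--Kurtz identity applies at every intermediate time and the displayed equality holds. The main technical hurdle is precisely the pointwise asymmetry of the neutral-birth term, resolved by the Donnelly--Kurtz symmetrization identity combined with inductive propagation of exchangeability in time, while the selective and competitive events cause no new difficulty since they act through the already-symmetric sampling measure $\mathrm m^{R_{u-}, G_{u-}}$.
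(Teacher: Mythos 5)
Your strategy has a fatal flaw at its very first step: the ``stronger distributional identity'' $(\zeta,R,G)\stackrel d=(\zeta,\pi^*R,\pi^*G)$ \emph{as processes} is false for lookdown dynamics. Already for the neutral two-level system with $\pi$ the transposition of levels $1$ and $2$: level $1$ never changes type, so $\P(G_t(1)\neq G_0(1))=0$, while $\P(G_t(2)\neq G_0(2))>0$ whenever the initial types at the two levels differ with positive probability; hence the two-time distributions of $G$ and $\pi^*G$ already disagree. The lookdown construction is deliberately level-asymmetric on path space --- only its \emph{fixed-time} marginals are exchangeable --- so the permuted process cannot solve the martingale problem $(\mathbf A, D_M)$, and no appeal to well-posedness can deliver the process-level identity. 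Concretely, the martingale property for $(\zeta,\pi^*R,\pi^*G)$ would require $\mathbf E\bigl[\int_{s_1}^{s_2}\bigl(\mathbf A F^\pi(\zeta_u,R_u,G_u)-(\mathbf A F)(\zeta_u,\pi^*R_u,\pi^*G_u)\bigr)du \mid \mathscr F_{s_1}\bigr]=0$, and conditionally on $\mathscr F_{s_1}$ the configuration $(R_{s_1},G_{s_1})$ is a fixed, non-exchangeable array, so the symmetrization-in-expectation you invoke is unavailable exactly where it is needed. Your ``inductive propagation of exchangeability'' is also circular as stated: to apply the symmetrization identity at an intermediate time $u$ you must already know the time-$u$ marginal is exchangeable, which is the assertion being proved, and continuous time offers no induction variable to break the circle.

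The paper's proof is precisely the honest version of the work your proposal tries to bypass: it restricts to the first $n$ levels and inducts over the (a.s.\ discrete) jump times. At a neutral event the reproducing pair is uniform on $\{(i,j):1\le i<j\le n\}$ because all $\LL_{ij}$ have equal rates, and one checks by the explicit computation with $\vartheta_{I,J}$ that applying the lookdown map at a \emph{uniformly chosen} pair to an exchangeable array yields an exchangeable array --- even though each individual $\vartheta_{i,j}$ destroys exchangeability. At selective and competitive events the level is uniform on $\{1,\dots,n\}$ and the replacement is drawn from the sampling measure $\mathrm m^{R_{u-},G_{u-}}$, which again preserves exchangeability of the marginal. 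This jump-by-jump argument is where the content of the proposition lives; it cannot be replaced by a soft generator-covariance argument, because the generator is genuinely not permutation-covariant and the process law is genuinely not permutation-invariant.
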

\begin{proof} It suffices to work along the sequence of jump times of the restriction of the process $(R_s, G_s)_{s\ge 0}$ of marked distance matrices   to the first $n$ levels, where $n \in \mathbb N$ is arbitrarily fixed.
Between these jump times, the types of the individuals on the first $n$ levels remain unchanged, and the genealogical distance between each pair of such individuals grows deterministically with slope $2$. The jump times of this process are given by the Poisson processes $(\LL_{ij})_{1\le i<j\le n}$ and $(\mathcal K_i)_{1\le i\le n}$. Let us also recall that $(R_0, G_0)$ is exchangeable by the assumption that it has the marked distance matrix distribution of a marked ultrametric measure space.

(i) We first consider the jumps given by the neutral events, i.e. by  the  processes $(\LL_{ij})_{1\le i<j\le n}$.
We assume that the restriction of the process $(R_u, G_u)_{u\ge 0}$ to the first $n$ levels jumps at some time $s$ due to a neutral reproduction event.

Proceeding inductively we assume that the restriction of $(R_{s-}, G_{s-})$ to the first $n$ levels is exchangeable conditionally given $(\zeta_u)_{u\le s}$. That is, $((R(\ell,m))_{1\le \ell,m\le n}, (G(\ell))_{1\le \ell \le n}):= ((R_{s-}(\ell,m)_{1\le \ell,m\le n}, (G_{s-}(\ell))_{1\le \ell \le n})$ has the same distribution (conditionally given $(\zeta_u)_{u\le s}$) as
\begin{align}\label{deftildeR}
(\tilde R, \tilde G) := ((R(\sigma(\ell),\sigma(m)))_{1\le \ell,m\le n}, (G(\sigma(\ell)))_{1\le \ell \le n}),
\end{align}
 for each permutation $\sigma$ of $[n]$. Let also $(I,J)$ be independent and uniformly distributed on $\{(i,j): 1\le i < j \le n\}$, and let the random array $(\tilde R', \tilde G'):=\vartheta_{I,J}(\tilde R,\tilde G)$ be constructed from $(\tilde R, \tilde G)$ according to \eqref{updateg} and \eqref{updater} with $(i,j)=(I,J)$.

Putting
\begin{align*}
f_J(m):=\begin{cases} m &\mbox{ if } m < J,\\ m-1 &\mbox{ if } J<m \le n, \end{cases}
\end{align*}
we can write $\tilde R'$ as
\begin{align}
\tilde R'(\ell, m) = \begin{cases} 0 &\mbox{ if }  \ell, m \in \{I,J\}, \\
\tilde R(I,f_J(m)) &\mbox{ if } \ell   \in \{I,J\} \mbox{ and }  m \in [n] \setminus \{I,J\}, \\
\tilde R(f_J(\ell) ,f_J(m)) &\mbox{ if } \ell, m \in [n] \setminus \{I,J\}.
\end{cases}
\end{align}

Using \eqref{deftildeR} one checks readily that, for each permutation $\sigma$ of the numbers $1,\ldots,n$, the random array \\ $((\tilde R'(\sigma(\ell), \sigma(m)))_{1\le \ell, m\le n}, (\tilde G'(\sigma(\ell))_{1\le \ell \le n})$  has the same distribution as $(\tilde R'(\ell, m))_{1\le \ell, m\le n}, \tilde G'(\ell)_{1\le \ell \le n})$ conditionally given  $(\zeta_u)_{u\le s}$. As all the Poisson processes $\mathcal L_{i,j}$ have the same rate, the pair $(i,j)$ for which $\mathcal L_{i,j}$ has an atom at time $s$ is distributed as $(I,J)$. Hence, the above implies the desired exchangeability of  $(R_{s}, G_{s})$.

(ii) We now turn to the non-neutral events. Let $u$ be a time point at which one of the counting measures $\mathcal K_i(\cdot \times [0,C]\times [0,1] \times \{\beta, \delta\})$, $1\le i \le n$, has an atom  for which the corresponding activation condition on the r.h.s. of \eqref{fecdraw} is satisfied for
$v=\zeta_u$, $g= G_{u-}$ and $h=G_{u-}(i)$.
 Making use of part (i) and proceeding by induction, we assume that  the random array   $(R_{u-}^{(0)}(\ell,m), G_{u-}(\ell))_{1\le  \ell,m \le n}$ is exchangeable given $(\zeta_w^{(k-1)})_{w\le u}$. Let $i$ be that element of  $\{1,\ldots n\}$  for which $\mathcal K_i(\{u\}\times [0,C]\times [0,1] \times \{\beta, \delta\})=1$; because all the Poisson point measures $\mathcal K_\iota$ have the same intensity given $\zeta$, the level $i$ is uniformly chosen from $\{1,\ldots n\}$. According to the update rule \eqref{compcons}, \eqref{updatebeta}, conditionally given $(\zeta_w)_{w\le u}$, the exchangeability of the restriction of  $(R_{u-}, G_{u-})$ to the first $n$ levels  propagates to the exchangeability of the restriction of $(R_u, G_u)$ to the first $n$ levels.
\end{proof}

We remark that by using the sampling measures $\m_s$ for the independent picks needed to continue the ancestral lineage at competitive and selective events, we avoid the formalism of genetic markers which is used in Section 6 of \cite{MR1728556} to trace ancestral lineages.


The above construction and Theorem~\ref{Gprop} show that the process $(\zeta, X)$ is the pathwise unique solution the system of SDEs given by~\eqref{totmassLD}, \eqref{eqG} and~\eqref{eqRG}, which is driven by $(\mathcal L_{ij})$, $(\mathcal K_i)$, $\mathcal W$ and extends~\eqref{totmassLD}, \eqref{eqG} to include also the genealogical distances.
\subsection{Two well-posed martingale problems in the lookdown framework}\label{twomp}

Let $(\mathcal W, \mathcal L, \mathcal K)$ be as in Sec.\ref{sec:pathwiseconstructionLD}, choose some $M>0$ and let $(\zeta, R, G)$ be the unique strong solution of the system of SDE's~\eqref{totmassLD}, \eqref{eqG}, \eqref{eqRG}.  In this subsection we will prove Proposition \ref{MPA}, thus establishing a well-posed martingale problem for the (suitably stopped) process $(\zeta, R, G)$. For this, we follow the strategy outlined at the end of Sec.~\ref{section:LDrepresentation} (right after the statement of Proposition \ref{MPA}). We would like to enrich the process $(\zeta,R,G)$ by the addition of a component that keeps track of the number of events. A natural choice would be to add counting processes associated with $\mathcal{L}$ and $\mathcal{K}$, but in view of Kurtz' Markov Mapping Theorem (Corollary 3.5 in~\cite{Kurtz98}), we will rather add stationary components. The idea is to have a jump process that tracks the atoms of $\mathcal{L}_{ij}$ for every pair  $i<j$, for the natural births, and the atoms of $\mathcal{K}_k$ for all $k\in \mathbb{N}$, for the selective birth and potential death events. With $C_M$ defined in \eqref{CM1}, let $\mathscr V_M$ be a countable algebra of subsets of $[0,C_M] \times [0,1] \times \{\beta, \delta\}$ which generates the $\sigma$-algebra of Borel sets on $[0,C_M] \times [0,1] \times \{\beta, \delta\}$ and put
$$\mathscr H_M := \{\{(i,j)\}: 1\le i < j < \infty\} \cup \{\{k\} \times V: k \in \N, V \in \mathscr V_M\}. $$
We now define the  components $\Lambda^M_s(H)$, $H \in \mathscr H_M$, $s \ge 0$, of the additional   $\{-1,1\}^{\mathscr H_M}$-valued process~$\Lambda^M$. This process, together with $(\zeta, R, G)$, will constitute the solution of the well-posed (stopped) martingale problem specified in Proposition \ref{MPAlambda} below. For $s \ge 0$, let us define
\begin{align}\label{LambdaM} 
\begin{split}
 \Lambda^M_s(\{(i,j)\}) &:=  \Lambda^M_0(\{(i,j)\})\, (-1)^{\mathcal L_{ij}((0,s])}, \qquad 1\le i < j < \infty, \\ 
\Lambda^M_s(\{k\} \times V) &:= \Lambda^M_0(\{k\} \times V)\,(-1)^{\mathcal K_k([(0,s]\times V)},  \quad k\in \mathbb N,\,V \in \mathscr V_M,
\end{split}
\end{align}
where the random variables $\Lambda^M_0(H)$, $H \in \mathscr H_M$, are chosen as independent and uniformly distributed on $\{-1,+1\}$.
The process $\Lambda^M$ thus records the positions of the atoms of the Poisson point processes $\mathcal L$ and~$\mathcal K$; e.g. the  the process $\Lambda^M_s(\{k\} \times V)$  jumps at time $s>0$ if and only if  $\mathcal K_k$  has an atom in $\{s\} \times V$.\\

To prepare for a martingale problem for $(\zeta, R, G, \Lambda^M)$  stopped at $\sigma_M$, we define the state space
\begin{align}\label{EMlambda}
\widehat{E}_M := \left( \left(\tfrac 1M, M\right)\times \mathbb R^{\mathbb N^2}\times  \mathbb I^\mathbb N \times  \{-1,+1\}^{\mathscr H_M}\right) \cup \{\Delta_M\}
\end{align}
where  $ \left(\tfrac 1M, M\right)\times  \mathbb R^{\mathbb N^2} \times \mathbb I^\mathbb N \times \{-1,+1\}^{\mathscr C_M}$ is equipped with the product topology and a sequence $(v_n,r_n,g_n,\lambda_n)$ is said to converge to $\Delta_M$ if either $v_n \to \frac 1M$ or $v_n \to M$ as $n \to \infty$.
Note that the space $\widehat{E}_M$ is an extension of the space $E_M$ defined in \eqref{EMlambda1}.\\

Next we display the generator of $(\zeta, R, G, \Lambda^M)$ restricted to appropriate test functions $F = F(v,r,g, \lambda)$, where $v \in \mathbb R_+$, $r \in \mathbb R^{\mathbb N^2}$, $g \in \mathbb I^\mathbb N$ and $\lambda \in \{-1,+1\}^{\mathscr H_M}$. 

For $H \in \mathscr H_M$ we define $\varphi_H$ as the 
 projection from $\{-1,+1\}^{\mathscr H_M}$ to its $H$-component; thus we have the identity $\Lambda_s^M(H) = \varphi_H(\Lambda_s^M)$. 

With $D_{1,M}$ and $D_{2,M}$ as in Section \ref{sec:martingalepbzetaGLambda},
 let $D_{3,M}$ be the set of those functions $\varphi: \{-1,+1\}^{\mathscr H_M} \to \mathbb R$ which are of the form 
\begin{align}\label{def:varphi}
\varphi =  \prod_{(i,j)\in L}  \varphi_{\{i,j\}} \prod_{\{k\}\times V\in \mathscr K}\varphi_{\{k\}\times V}
\,
\end{align}
for some $n \in \mathbb N$, $L\subset \{(i,j):1\le i<j \le n\}$, and some finite subset $\mathscr K \subset  \{\{k\} \times V: k \in \N, V \in \mathscr V_M\}$ whose elements are disjoint subsets of $\{1,\ldots,n\}\times[0,C_M]\times[0,1]\times\{\beta,\delta\}$. We set 
\begin{equation}\label{def:K=UK}
K:= \bigcup_{H\in \mathscr K} H. 
\end{equation}
We will consider test functions of the form
\begin{align} \label{firstF}
F(v,r,g,\lambda) =f(v,r) \gamma(g) \varphi(\lambda),
\end{align}
for $(v,r,m,\lambda)\in\widehat E_M$ with $v\in(1/M,M)$, where $f \in D_{1,M}$, $\gamma \in D_2$ and $\varphi \in D_{3,M}$. Here we also assume that $F$ is continuous in $\Delta_M$. The smallest possible $n \in \mathbb N$ which fits to the required representations of $f$, $\gamma$ and $\varphi$ will be called the {\em degree} of~$F$. We write $F_{r(i,j)}$ for the partial derivative of $F$ with respect to the variable $r(i,j)$, and $F_{v}$ for partial derivative of $F$ with respect to $v$. \\

 Let $\vartheta_{i,j}$ and $\tilde \vartheta_{j,\theta, h'}$ be the updates acting on $(r,g)$ at the neutral and selective events, resepctively,  as defined in \eqref{updateg}, \eqref{updater},  \eqref{selupdate} and  \eqref{selupdate2}.
 For $(r,g) \in \mathbb R^{\mathbb N^2}\times  \mathbb I^\mathbb N$, we recall the definition of the  sampling measure ${\rm m}^{r,g}$ from Definition \ref{defproper}.  Let $\kappa'_{{\rm m}^{r,g}}$ be a measurable mapping defined on $[0,1]$ that transports the uniform distribution on $[0,1]$ into the  measure ${\rm m}^{r,g}$. Also, let $\kappa''_{{\rm m}^{r,g}}$ be a measurable mapping defined on $[0,1]$ that transports the uniform distribution on $[0,1]$ into the conditioned sampling measure ${\rm m}^{r,g}(d\theta,dh|h=A)$. We write $\mu^g$ for the second marginal of ${\rm m}^{r,g}$ and put $(\theta^{r,g}(w), h^{r,g}(w)):= \kappa'_{{\rm m}^{r,g}}(w)$, $(\tilde \theta^{r,g}(w), \tilde h^{r,g}(w)):= \kappa''_{{\rm m}^{r,g}}(w)$, $w \in [0,1]$. We now consider a function $F$ of degree $n$ as in \eqref{firstF}, with $L$, $\mathscr K$ the sets used in the definition of $\varphi$ \eqref{def:varphi}, and set $K$ as in \eqref{def:K=UK}. For all $v \in(1/M,M)$, all pairs $(r,g)\in  \mathbb R^{\mathbb N^2}\times  \mathbb I^\mathbb N$, all $\lambda \in  \{-1,+1\}^{\mathscr H_M}$,
 \begin{align}\begin{split}
\label{genzetaGlambda}
&\widehat{\mathbf A} F(v,r,g,\lambda)=\frac{v^2}{2}F_{vv}(v,r,g,\lambda)
+\big(bv^2\mu^g\{A\}-2cv^3\mu^g\{A\}\mu^g\{B\}\big)F_v(v,r,g,\lambda)\\
&+2v\sum_{1\leq i\neq j\leq n}F_{r(i,j)}(v,r,g,\lambda)\\
&+  \sum_{1\leq i<j\leq n}\big((-1)^{\mathbf 1_L(i,j)} F(v,\vartheta_{i,j}(r,g), \lambda)-F(v,r,g, \lambda)\big)\\
&+\sum_{j=1}^n \mathbf 1_{\{g(j)=B \} } \int_0^{cv^2\mu^g\{A\}} dz\, \int_0^1 dw  \,
\left((-1)^{\mathbf 1_K(j,z,w,\delta)}F(v,{\tilde\vartheta}_{j,\theta^{r,g}(w),h^{r,g}(w)}(r,g),\lambda)-F(v,r,g,\lambda)\right)\\
&+\sum_{j=1}^n \mathbf 1_{\{g(j)=B \} } \int_{cv^2\mu^g\{A\}}^{C_M} dz\, \int_0^1 dw  \,
\left((-1)^{\mathbf 1_K(j,z,w,\delta)}F(v,r,g,\lambda)-F(v,r,g,\lambda)\right)\\
&+\sum_{j=1}^n \mathbf 1_{\{g(j)=A \} } \int_0^{cv^2\mu^g\{B\}} dz\, \int_0^1 dw   \,
\left((-1)^{\mathbf 1_K(j,z,w,\delta)}F(v,{\tilde\vartheta}_{j,\theta^{r,g}(w),h^{r,g}(w)}(r,g),\lambda)-F(v,r,g,\lambda)\right)\\
&+\sum_{j=1}^n \mathbf 1_{\{g(j)=A \} } \int_{cv^2\mu^g\{B\}}^{C_M} dz\, \int_0^1 dw  \,
\left((-1)^{\mathbf 1_K(j,z,w,\delta)}F(v,r,g,\lambda)-F(v,r,g,\lambda)\right)\\
&+\sum_{j=1}^n\int_0^{bv\mu^g\{A\}}dz\, \int_0^1 dw\,  \left((-1)^{\mathbf 1_K(j,z,w,\beta)}F(v,{\tilde\vartheta}_{j,\tilde\theta^{r,g}(w),\tilde h^{r,g}(w)}(r,g),\lambda)-F(v,r,g,\lambda)\right)\\
&+\sum_{j=1}^n\int_{bv\mu^g\{A\}}^{C_M}dz\, \int_0^1 dw\,   \left((-1)^{\mathbf 1_K(j,z,w,\beta)}F(v,r,g,\lambda)-F(v,r,g,\lambda)\right),
\end{split}
\end{align}
and $\widehat{\mathbf A} F(\Delta_M)=0$. 
Let $\widehat{D}_M$ be the linear span of the constant real-valued functions on $\widehat{E}_M$ and all functions  of the form~\eqref{firstF}, and denote the extension of \eqref{genzetaGlambda} to $\widehat{D}_M$ again by $\widehat{\mathbf A}$.
\begin{proposition}\label{MPAlambda}
The process $(\zeta_{s\wedge \sigma_M}, R_{s\wedge \sigma_M},G_{s\wedge \sigma_M}, \Lambda^M_{s\wedge \sigma_M})_{s\ge 0}$ solves the  martingale problem $(\widehat{\mathbf A}, \widehat{D}_M)$, and this martingale problem  is well-posed.
\end{proposition}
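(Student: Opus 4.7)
\emph{Existence.} I take as candidate the process $(\zeta_{s\wedge\sigma_M}, R_{s\wedge\sigma_M}, G_{s\wedge\sigma_M})$ from Theorem~\ref{zetaRG} augmented by $\Lambda^M$ from \eqref{LambdaM}. For a test function $F=f\gamma\varphi$ of the form \eqref{firstF} with degree $n$, I apply It\^o's formula for jump-diffusions. The continuous martingale contribution of $\zeta$ driven by $\mathcal W$, combined with the deterministic slope-$2\zeta_s$ growth of $R(i,j)$ between jumps, produces the diffusion and drift terms in the first three lines of \eqref{genzetaGlambda}. Each atom of $\mathcal L_{ij}$ with $i<j\le n$ triggers a joint jump $(r,g)\mapsto\vartheta_{i,j}(r,g)$ and a sign flip of $\Lambda^M(\{i,j\})$, yielding the factor $(-1)^{\mathbf 1_L(i,j)}$ multiplying $\varphi$. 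Atoms of $\mathcal K_j$ with $j\le n$ and $z\le C_M$ produce either an activating update $\tilde\vartheta_{j,\cdot,\cdot}(r,g)$ (when the relevant condition of \eqref{fecdraw} is met) or a dormant one, and in either case flip $\Lambda^M(\{j\}\times V)$ whenever $(z,w,\omega)\in V$; splitting the $z$-integral at the activation threshold reproduces the last six lines of \eqref{genzetaGlambda}. Stopping at $\sigma_M$ confines the process to $\widehat E_M$ with uniformly bounded coefficients, so Dynkin's formula yields the martingale property.

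\emph{Uniqueness.} Following the strategy of Lemma~4.2 in \cite{MR1728556}, I reconstruct the driving noise from any solution and invoke strong uniqueness of \eqref{totmassLD}, \eqref{eqG}, \eqref{eqRG}. Using (suitable truncations of) $F_1(v,r,g,\lambda)=v$ and $F_2(v,r,g,\lambda)=v^2$ in the martingale problem identifies the semimartingale decomposition of $\zeta_{\cdot\wedge\sigma_M}$ with drift $\zeta f_M(\zeta,\mu^G\{A\})$ and quadratic variation $\int_0^\cdot\zeta_u^2\,du$. L\'evy's characterization then shows that
\[
\widetilde{\mathcal W}_s \;:=\; \int_0^{s\wedge\sigma_M}\zeta_u^{-1}\bigl(d\zeta_u-\zeta_uf_M(\zeta_u,\mu^{G_u}\{A\})\,du\bigr),
\]
extended past $\sigma_M$ via an independent Brownian motion, is a standard Brownian motion. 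For each $H\in\mathscr H_M$, the sign-flip times of $\Lambda^M_\cdot(H)$ form a simple counting process $N^H$; applying the martingale problem to products $\prod_{H\in S}\varphi_H$ over finite collections $S$ of pairwise disjoint sets identifies the joint compensators and exhibits $(N^H)_H$ as the $H$-restrictions of a unit-rate Poisson measure $\mathcal L$ and a Poisson measure $\mathcal K$ with the intensities of (I2)--(I3), mutually independent. Since $\mathscr V_M$ generates the Borel $\sigma$-algebra on $[0,C_M]\times[0,1]\times\{\beta,\delta\}$, this reconstructs $(\mathcal L,\mathcal K)$ on all relevant sets up to $\sigma_M$; independence from $\widetilde{\mathcal W}$ follows from the absence of cross terms in $\widehat{\mathbf A}$. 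Theorem~\ref{zetaRG} then forces $(\zeta,R,G)$ on $[0,\sigma_M]$ to coincide almost surely with the strong solution built from these inputs and the prescribed initial data, while $\Lambda^M$ is in turn determined by $\Lambda^M_0$ together with the atoms of $\mathcal L$ and $\mathcal K$; uniqueness in law of the stopped four-tuple follows.

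\emph{Main obstacle.} The principal technical difficulty is the rigorous extraction, from the sign-flip histories of $\Lambda^M$ alone, of a Poisson random measure with the \emph{exact} intensities prescribed in (I2)--(I3) and independent of $\widetilde{\mathcal W}$. This requires combining a monotone-class/$\pi$-system argument (since $\mathscr V_M$ is merely an algebra generating the Borel sets) with a multivariate Watanabe-type characterization in order to establish simultaneously Poissonness, the correct intensity measure, and joint independence across pairwise disjoint $H\in\mathscr H_M$. Once that is in place, the remaining steps amount to combining Theorem~\ref{zetaRG} with the defining formula \eqref{LambdaM} for $\Lambda^M$.
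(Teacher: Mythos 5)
Your proposal is correct and follows essentially the same route as the paper: existence via It\^o's formula applied to the strong solution of \eqref{totmassLD}, \eqref{eqG}, \eqref{eqRG} together with the sign-flip dynamics \eqref{LambdaM}, and uniqueness by reconstructing from any solution the driving Brownian motion and the Poisson point processes $(\mathcal L,\mathcal K)$ (restricted to $z\le C_M$) from the semimartingale decomposition of $\zeta$ and the flip histories of $\Lambda^M$, and then invoking pathwise uniqueness of the SDE system from Theorem~\ref{zetaRG}. The paper states these two steps very tersely; your elaboration of the L\'evy/Watanabe characterizations and the monotone-class argument over $\mathscr V_M$ fills in exactly the details the paper leaves implicit.
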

\begin{proof}
a) For all $F \in \widehat{D}_M$,
\begin{align}\label{FMart}
F(\zeta_{s\wedge \sigma_M}, R_{s\wedge \sigma_M},G_{s\wedge \sigma_M},\Lambda^M_{s\wedge \sigma_M})-\int_0^{s\wedge \sigma_M} \widehat{\mathbf A} F(\zeta_u,R_u, G_u,\Lambda^M_u) \, du, \quad s\ge 0,
\end{align} is a martingale by It\^o's formula, since $(\zeta_{s\wedge \sigma_M}, R_{s\wedge \sigma_M},G_{s\wedge \sigma_M}, \Lambda^M_{s\wedge \sigma_M})_{s\ge 0}$ obeys up to time $\sigma_M$ the SDEs \eqref{totmassLD}, \eqref{eqG}, \eqref{eqRG} and the SDE for $\Lambda^M$ driven by $(\mathcal L_{ij})$ and $(\mathcal K_i)$.

b) Conversely, from any solution $(\widehat \zeta_{s\wedge \widehat \sigma_M},\widehat R_{s\wedge \widehat \sigma_M}, \widehat G_{s\wedge \widehat \sigma_M},\widehat \Lambda^M_{s\wedge \widehat \sigma_M})_{s\ge 0}$ to  the martingale problem $(\widehat{\mathbf A}, \widehat{D}_M)$ we can extract, up to the stopping time $\widehat \sigma_M$ (which is defined as in \eqref{sigmaM} but now for $\widehat \zeta$ instead of~$\zeta$) a Poisson point process $\widehat {\mathcal L}$ on $\mathbb R_+ \times \{(i,j): 1\le i < j<\infty\}$ and a Poisson point process  $\widehat {\mathcal K}$ on $\mathbb R_+ \times \mathbb N \times [0,C_M] \times [0,1] \times \{\beta,\delta\}$, such that $({\widehat L}, \widehat {\mathcal K})$ is equal in distribution to the corresponding restriction of $({\mathcal L},  {\mathcal K})$. We can then extract from  $(\widehat \zeta, \widehat G, \widehat {\mathcal L}, \widehat {\mathcal K})$ also a Brownian motion~$\widehat {\mathcal W}$ up to the stopping time $\widehat \sigma_M$. Taking $(\widehat {\mathcal W},  \widehat {\mathcal L}, \widehat {\mathcal K})$ as the source of randomness in the system given by~\eqref{totmassLD}, \eqref{eqG}, \eqref{eqRG} and in the definition of $\Lambda^M$, we infer from the pathwise uniqueness of that system that
\begin{align}\label{disteq}
(\widehat \zeta_{s\wedge \widehat \sigma_M},\widehat R_{s\wedge \widehat \sigma_M}, \widehat G_{s\wedge \widehat \sigma_M},\widehat \Lambda^M_{s\wedge \widehat \sigma_M})_{s\ge 0} \stackrel d= (\zeta_{s\wedge \sigma_M}, R_{s\wedge \sigma_M},G_{s\wedge \sigma_M},\Lambda^M_{s\wedge \sigma_M})_{s\ge 0},\end{align}
as asserted.
\end{proof}
Now we turn to the completion of the proof of Proposition \ref{MPA}, by establishing a well-posed martingale problem for $(\zeta,R,G)$. Recall from Sec. \ref{updateR} that the mapping $w \mapsto (\theta'(w), h'(w))$ which appears in \eqref{genzetaGlambda} is chosen such that, given $(R_{s-}, G_{s-})=(r,g)$, it transports the uniform distribution on $[0,1]$ into a pick from the sampling measure  $\mathrm  m^{r, g}$. Thus, for functions $F$ that are of the form \eqref{firstF} with $\varphi \equiv 1$ (and hence do not depend on $\lambda$) the operator $\widehat{\mathbf A}$ defined in \eqref{genzetaGlambda} turns into the operator~$\mathbf A$ defined in \eqref{genX}.
The proof of Proposition \ref{MPA} now follows from Proposition \ref{MPAlambda} together with Kurtz' Markov mapping theorem, see Corollary 3.5 in~\cite{Kurtz98}. The roles of the processes $X$ and $Y$ there are played by our processes $(\zeta, R, G, \Lambda^M)$ and $(\zeta, R, G)$, respectively.
In the initial distribution, the components of $\Lambda^M_0$ are i.i.d.\ Unif$\{-1,1\}$ distributed, and also for the kernel $\alpha$ appearing in Corollary~3.5 in \cite{Kurtz98}, we take $\alpha(v,r,g,\cdot)$ to be the i.i.d.\ Unif$\{-1,1\}$ distribution. The process $(\zeta,R,G)$ solves the martingale problem for $(\mathbf A, D_M)$ by~\eqref{eqRG}, and the martingale problem for $(\mathbf{\widehat A},\widehat D_M)$ is well-posed by Proposition~\ref{MPAlambda}.

To verify the assumption on the forward equation in~\cite[Corollary 3.5]{Kurtz98}, we apply Theorem~2.9 d) of~\cite{Kurtz98}: 
with $\widehat E_M$ specified in \eqref{EMlambda}, we consider the space $\widetilde E_M := \widehat E_M\times [0,1]^2  \times  (\R^{\N^2}\times\I^\N)^\N\times  (\R^{\N^2}\times\I^\N)^\N$, whose  topology  is obtained from the product topology by identifying all states with $\widehat E_M$-component $\Delta_M$ into a single state, also denoted by $\Delta_M$. For  for $F, v, r, g$ and $\lambda$ as in \eqref{genzetaGlambda} we define the {\em generator in the environment} $(y,w, (r_i,g_i)_i, (\tilde r_i, \tilde g_i)_i) \in [0,1]^2  \times  (\R^{\N^2}\times\I^\N)^\N \times  (\R^{\N^2}\times\I^\N)^\N$ as
\begin{align*}
		A^0F({\mathbf y}) = A^0 F (v,r,g,&\lambda,y,w,(r_i,g_i)_i,(\tilde r_i, \tilde g_i)_i)
		=\frac{v^2}{2}F_{vv}(v,r,g,\lambda)
		+\big(bv^2y-2cv^3y(1-y)\big)F_v(v,r,g,\lambda)\\
		&+2v\sum_{1\leq i\neq j\leq n}F_{r(i,j)}(v,r,g,\lambda)\\
		&+  \sum_{1\leq i<j\leq n}\big((-1)^{\mathbf 1_L(i,j)} F(v,\vartheta_{i,j}(r,g), \lambda)-F(v,r,g, \lambda)\big)\\
		&+\sum_{j=1}^n \mathbf 1_{\{g(j)=B \} } \int_0^{cv^2y} dz\,
		\left((-1)^{\mathbf 1_K(j,z,w,\delta)}F(v,r_j,g_j,\lambda)-F(v,r,g,\lambda)\right)\\
		&+\sum_{j=1}^n \mathbf 1_{\{g(j)=B \} } \int_{cv^2y}^{C_M} dz\,
		\left((-1)^{\mathbf 1_K(j,z,w,\delta)}F(v,r,g,\lambda)-F(v,r,g,\lambda)\right)\\
		&+\sum_{j=1}^n \mathbf 1_{\{g(j)=A \} } \int_0^{cv^2(1-y)} dz\, \,
		\left((-1)^{\mathbf 1_K(j,z,w,\delta)}F(v,r_j,g_j,\lambda)-F(v,r,g,\lambda)\right)\\
		&+\sum_{j=1}^n \mathbf 1_{\{g(j)=A \} } \int_{cv^2(1-y)}^{C_M} dz \,
		\left((-1)^{\mathbf 1_K(j,z,w,\delta)}F(v,r,g,\lambda)-F(v,r,g,\lambda)\right)\\
		&+\sum_{j=1}^n\int_0^{bvy}dz\,  \left((-1)^{\mathbf 1_K(j,z,w,\beta)}F(v,\tilde r_j, \tilde g_j,\lambda)-F(v,r,g,\lambda)\right)
		\\&+\sum_{j=1}^n\int_{bvy}^{C_M}dz\,  \left((-1)^{\mathbf 1_K(j,z,w,\beta)}F(v,r,g,\lambda)-F(v,r,g,\lambda)\right),
\end{align*}
and $A^0F(\Delta_M)=0$. We then define the  transition kernel  $\eta$ from  $\widehat E_M$ to $\widetilde E_M$ as follows:  For ${\mathbf x} := (v,r,g,\lambda)\in\widehat E_M$ with $v\in(1/M,M)$, we put
\[\eta({\mathbf x},\cdot)=\delta_{(v,r,g,\lambda,\mu^g\{A\})}\otimes dw\, \mathbf 1_{[0,1]}(w)\otimes
\bigotimes_{j\in\N}\delta_{{\tilde\vartheta}_{j,\theta^{r,g}(w),h^{r,g}(w)}(r,g)}\otimes \bigotimes_{j\in\N} \delta_{{\tilde\vartheta}_{j,\tilde\theta^{r,g}(w),\tilde h^{r,g}(w)}(r,g)},\]
and $\eta(\Delta_M,\cdot)=\delta_{\Delta_M}$. In words, the transition kernel $\eta$ records the input state $(v,r,g,\lambda)$, reads off from this the type frequency $y= \mu^g(A)$, chooses a uniformly distributed sampling seed $w$ and produces the output states of the type configuration and the distance matrix that arise by drawing from the sampling measure using $w$ and updating according to the rules \eqref{updatedelta} and \eqref{updatebeta}, for the case of competition as well as fecundity events.
We then have
\[\mathbf{\widehat A} F ({\mathbf x}) =\int \eta({\mathbf x},d{\mathbf y}) A^0 F({\mathbf y}).\]
We note that  $A^0 F$ is bounded and continuous for all $F \in \widehat D_M$. It follows that $\widehat D_M\times \{A^0F:F\in\widehat D_M\}$ is separable with respect to the $\ell_1$-product of the supremum metric, hence also $\widehat D_M\times \{\mathbf{\widehat{A}}F:F\in\widehat D_M\}$ has this property. That is, $\mathbf{\widehat A}$ satisfies Hypothesis 2.4 in~\cite{Kurtz98}. Also, $\widehat D_M$ is closed under multiplication and separates points. Moreover, for each $w,(r_i,g_i)_i,(\tilde r_i, \tilde g_i)_i$, the operator $F\mapsto A^0F(\cdot,w,(r_i,g_i)_i,(\tilde r_i, \tilde g_i)_i))$ is a pre-generator in the sense of~(2.1) of~\cite{Kurtz98}.
By Theorems 2.9 d) and 2.7 of \cite{Kurtz98}, it then follows that the assumption on the forward equation in \cite[ Corollary~3.5]{Kurtz98} is satisfied. This concludes the proof of Proposition \ref{MPA}.
\begin{remark}\label{KMMTH}
For our application of the Markov mapping theorem in the proof of Proposition  \ref{MPA} we resorted to \cite[Corollary~3.5]{Kurtz98} instead of the more recent variant \cite[Theorem A.2]{EtheridgeKurtz}, because we cannot guarantee the continuity of $\mathbf A F$  for $F\in {\widehat D}_M$, which is required in the latter. \cite[Corollary~3.5]{Kurtz98} requires a condition on the forward equation for an auxiliary operator $A^0$ which we were able to verify. A similar reasoning applies to the proof of Theorem \ref{martprobth} to which we turn now.
\end{remark}
\subsection{The symmetrized selective genealogy. Proof of Theorem \ref{martprobth}}\label{symmgen}
This section devoted to the proof of Theorem~\ref{martprobth}. We divide this proof into three steps. \\
\noindent \textbf{Step 1.}
Recall the definition of $D_M$ just after \eqref{genX}, and that of $\Phi_F$ in \eqref{eq:PhiF_APhiF}. The process $(\xi, Y)$ was defined in terms of $(\zeta, X) = (\zeta, R, G)$ by means of \eqref{def:psi}, using the time change \eqref{tch}. With regard to this time change we first claim that $(\zeta_{s\wedge \sigma_M},\psi( R_{s\wedge \sigma_M}, G_{s\wedge \sigma_M}))_{s\ge 0}$ solves the martingale problem $(\widetilde{\mathbb A}, \mathbb D_M)$, where the generator $\widetilde{\mathbb{A}}$ is defined by
\begin{equation}\label{defAtilde}
\widetilde{\mathbb A}\Phi_F (v,\chi)
:= \int \mathbf A F(v,r,g) \nu^\chi(dr, dg),\qquad F\in D_M,
\end{equation}
with the marked distance matrix distribution $\nu^\chi$  defined at the beginning of Section \ref{mppopsizeandgenealogy}, and where $\mathbb{D}_M$ is the domain of $\widetilde{\mathbb{A}}$ containing the linear span of all the functions $\Phi_F$ for $F\in D_M$.
We assume that the initial configuration $X_0=(R_0,G_0)$ is distributed according to the marked distance matrix distribution of $Y_0$.\\

The process  $(\zeta_{s\wedge \sigma_M},\psi( R_{s\wedge \sigma_M}, G_{s\wedge \sigma_M}))_{s\ge 0}$ arises from the process $(\zeta_{s\wedge \sigma_M}, R_{s\wedge \sigma_M},G_{s\wedge \sigma_M})_{s \ge 0}$  through the measurable mapping given by $(v,r,g) \mapsto (v,\psi(r,g))$, where $\psi(r,g)$  is the isomorphy class of the marked ultrametric measure space $(\mathbb{T}^r, r, \mathrm{m}^{r,g})$ and measurability of $\psi$ can be shown as in Section~10.2 of~\cite{Gu2}.

Due to Theorem~\ref{zetaRG}, the process  $(\zeta_{s\wedge \sigma_M}, R_{s\wedge \sigma_M},G_{s\wedge \sigma_M})_{s \ge 0}$ is Markovian.
Moreover, we know from Proposition~\ref{MPA} that the
process $(\zeta_{s\wedge \sigma_M}, R_{s\wedge \sigma_M},G_{s\wedge \sigma_M})_{s \ge 0}$ solves the martingale problem $(\mathbf A, D_M)$.
Hence, in order to prove our first claim,
it suffices to show that
\begin{multline}\label{e:A}
	\mathbf E\Big( \Phi_F (\zeta_{s_2\wedge \sigma_M}, \psi( R_{s_2\wedge \sigma_M}, G_{s_2\wedge \sigma_M})) 
	-  \Phi_F (\zeta_{s_1\wedge \sigma_M}, \psi( R_{s_1\wedge \sigma_M}, G_{s_1\wedge \sigma_M})) \\
	- \int_{s_1\wedge \sigma_M}^{s_2\wedge\sigma_M} \widetilde{\mathbb A} \Phi_F (\zeta_u,\psi( R_{u}, G_u))\, du\,\Big|\,
	 (\zeta_u, \psi( R_u, G_u))_{u\le s_1\wedge \sigma_M } \Big) =0\quad \text{a.\,s.}
\end{multline}
for all $F\in D_M$ and $0\le s_1\le s_2$.
The l.h.s.\ in~\eqref{e:A} equals a.\,s.\ the conditional expectation \\ $\E\big(\cdot \,\big|\,  (\zeta_u, \psi( R_u, G_u))_{u\le s_1\wedge \sigma_M } \big)$ of
\begin{multline}\label{e:AA}
	\mathbf E\Big( \Phi_F (\zeta_{s_2\wedge \sigma_M}, \psi( R_{s_2\wedge \sigma_M}, G_{s_2\wedge \sigma_M})) 
	-  \Phi_F (\zeta_{s_1\wedge \sigma_M}, \psi( R_{s_1\wedge \sigma_M}, G_{s_1\wedge \sigma_M})) \\
	- \int_{s_1\wedge \sigma_M}^{s_2\wedge\sigma_M} \widetilde{\mathbb A} \Phi_F (\zeta_u, R_{u}, G_u)\, du\,\Big|\,
	(\zeta_u, R_u, G_u)_{u\le s_1\wedge \sigma_M } \Big)
\end{multline}
by the tower property of conditional expectation.
By the Markov property of $(\zeta_{s\wedge \sigma_M}, R_{s\wedge\sigma_M}, G_{s\wedge\sigma_M})_{s\ge 0 }$ , the expression in~\eqref{e:AA} does a.\,s.\ not change when we replace $(\zeta_u,  R_u, G_u)_{u\le s_1\wedge \sigma_M }$ with $(\zeta_{s_1\wedge \sigma_M}, R_{s_1\wedge \sigma_M}, G_{s_1\wedge \sigma_M})$ in the conditioning. Moreover, we have
\begin{multline}\label{e:PhiF-F}
	\mathbf E \Big(\int
	F(\zeta_{s\wedge \sigma_M}, r,g) \nu^{\psi(R_{s\wedge \sigma_M},G_{s\wedge \sigma_M})}    (dr,dg)\,\Big|\, \zeta_{s\wedge\sigma_M},\zeta_{s_1\wedge \sigma_M}, R_{s_1\wedge \sigma_M}, G_{s_1\wedge \sigma_M}  \Big)\\
	= \mathbf E \Big( F(\zeta_{s\wedge \sigma_M},R_{s\wedge \sigma_M},G_{s\wedge \sigma_M}) \,\Big|\, \zeta_{s\wedge\sigma_M},\zeta_{s_1\wedge \sigma_M}, R_{s_1\wedge \sigma_M}, G_{s_1\wedge \sigma_M} \Big) \quad\text{a.\,s.}
\end{multline}
for all $s\ge s_1$, and for these $s$ we also have
\begin{multline}\label{e:phiAF-AF}
	\mathbf E \Big(\int
	\mathbf AF(\zeta_{s\wedge \sigma_M}, r,g) \nu^{\psi(R_{s\wedge \sigma_M},G_{s\wedge \sigma_M})}    (dr,dg) \,\Big|\, \zeta_{s\wedge\sigma_M},\zeta_{s_1\wedge \sigma_M}, R_{s_1\wedge \sigma_M}, G_{s_1\wedge \sigma_M}  \Big)  \\
	= \mathbf E \Big( \mathbf AF(\zeta_{s\wedge \sigma_M},R_{s\wedge \sigma_M},G_{s\wedge \sigma_M})
	\,\Big|\, \zeta_{s\wedge\sigma_M},\zeta_{s_1\wedge \sigma_M}, R_{s_1\wedge \sigma_M}, G_{s_1\wedge \sigma_M}  \Big) \quad\text{a.\,s.}
\end{multline}
by Fubini and an argument as in e.\,g.\ Proposition 10.3 of~\cite{Gu2} which uses that $(R_s,G_s)$ is exchangeable conditionally given $\zeta_s$ by Proposition~\ref{propexch}, and $(R_s,G_s)$ is a.\,s.\ proper by Corollary \ref{RGproper}. Using the definition of $\Phi_F$ and $\widetilde{\mathcal A}$, and~\eqref{e:PhiF-F}, \eqref{e:phiAF-AF}, we obtain that the expression in~\eqref{e:AA} equals a.\,s.\
\begin{multline*}
	\mathbf E\Big( F (\zeta_{s_2\wedge \sigma_M}, R_{s_2\wedge \sigma_M}, G_{s_2\wedge \sigma_M}) 
	-  F (\zeta_{s_1\wedge \sigma_M}, R_{s_1\wedge \sigma_M}, G_{s_1\wedge \sigma_M}) \\
	- \int_{s_1\wedge \sigma_M}^{s_2\wedge\sigma_M} \mathbf A F (\zeta_u, R_{u}, G_u)\, du\,\Big|\,
	(\zeta_u, R_u, G_u)_{u\le s_1\wedge \sigma_M } \Big)
\end{multline*}
which in turn is a.\,s.\ equal to $0$ by Proposition~\ref{MPA}. This shows the claim~\eqref{e:A}.\\

\noindent \textbf{Step 2.}
Next we show the well-posedness of the martingale problem $(\widetilde{\mathbb A}, \mathbb D_M)$.
From Proposition \ref{MPA} we recall that the martingale problem $(\mathbf A, D_M)$ is well-posed. Similar as in Sec.~\ref{twomp} we are going to apply Kurtz' Markov mapping theorem in the form of \cite[Corollary 3.5]{Kurtz98}.~Let us check the validity of the required assumptions using the notation from there. The state space of the ``coarse'' process $(\zeta_{s\wedge \sigma_M}, \psi( R_{s\wedge \sigma_M}, G_{s\wedge \sigma_M}))_{s\ge 0}$ is $E_0:= S_M$ defined in \eqref {DefSM}.
The state space of the ``fine'' process $(\zeta^{\sigma_M}, R^{\sigma_M},G^{\sigma_M})$ is  $E:=E_M$ defiend in \eqref{EMlambda1}.
As the mapping $\gamma$ from $E$ to $E_0$ and the probability kernel~$\alpha$ from $E_0$ to $E$ that both figure in \cite{Kurtz98} Corollary~3.5, we take
$$\gamma(v,r,g):= (v,\psi(r,g))\in E_0, \qquad \alpha((v,\chi),dv'\,dr\,dg):=\delta_v(dv')\otimes\nu^{\chi}(dr\,dg).$$
Then we have
$\gamma(\alpha(\chi,\cdot))=\delta_\chi$ by a reconstruction argument as in e.g.\ Proposition 10.5 of \cite{Gu2}. We can rewrite the operator $\widetilde{\mathbb A}$ defined in \eqref{defAtilde} as
\[\widetilde{\mathbb A}\Phi_F (v,\chi)
= \int \mathbf A F(v',r,g) \alpha((v,\chi),dv'\,dr\,dg),\qquad F\in D_M.\]
In view of Step 1 and the well-posedness of the martingale problem $(\mathbf A, D_M)$, we can now infer the well-posedness of the martingale problem $(\widetilde{\mathbb A}, \mathbb D_M)$ as well as the Markov property of its solution from \cite{Kurtz98} Corollary~3.5.

To verify the assumption on the forward equation in~\cite[Corollary 3.5]{Kurtz98}, we proceed in analogy to the proof of Proposition \ref{MPA}: again we apply Theorem~2.9 d) of \cite{Kurtz98}, now  defining $\eta$ as a transition kernel from $E_M$ to $E_M\times [0,1]^2  \times  (\R^{\N^2}\times\I^\N)^\N\times  (\R^{\N^2}\times\I^\N)^\N$ where the topology on the latter space is obtained from the product topology by identifying all states with $E_M$-component $\Delta_M$ into a single state, also denoted by~$\Delta_M$. For $(v,r,g,\lambda)\in E_M$ with $v\in(1/M,M)$, we define $\eta((v,r,g),\cdot)$ by
\[\eta((v,r,g),\cdot)=\delta_{(v,r,g,\mu^g\{A\})}\otimes dw\mathbf{1}_{w\in[0,1]}\otimes
\bigotimes_{j\in\N}\delta_{{\tilde\vartheta}_{j,\theta^{r,g}(w),h^{r,g}(w)}(r,g)}\otimes \bigotimes_{j\in\N} \delta_{{\tilde\vartheta}_{j,\tilde\theta^{r,g}(w),\tilde h^{r,g}(w)}(r,g)},\]
and $\eta(\Delta_M,\cdot)=\delta_{\Delta_M}$. The operator $A^0$ figuring in \cite[Theorem~2.9]{Kurtz98} is defined in complete analogy to the operator $A^0$ in the proof of Proposition \ref{MPAlambda}, now without the component $\lambda$ in the argument. This results in
\[\mathbf{A} F ({\mathbf x}) =\int \eta({\mathbf x},d{\mathbf y}) A^0 F({\mathbf y}),\]
as requred in \cite[Theorem~2.9]{Kurtz98}.
The validity of the assumption on the forward equation now follows as in the end of Sec. \ref{twomp}.

\noindent \textbf{Step 3.} Steps 1 and 2 together show that $(\zeta_{s\wedge \sigma_M},\psi( R_{s\wedge \sigma_M}, G_{s\wedge \sigma_M}))_{s\ge 0}$ is the unique  solution of the martingale problem $(\widetilde{\mathbb A}, \mathbb D_M)$ and is Markovian. The assertion of Theorem \ref{martprobth}   now follows from the time-change relation $\widetilde{\mathbb A} = v\mathbb A$ (see e.\,g.\ Theorem~6.1.3 of~\cite{ethierkurtz}). This completes the proof of Theorem \ref{martprobth}.

\subsection{Proof of Propositions \ref{propcont}, \ref{prop:projection} and \ref{strongsol_type_process}}\label{proof_of_two_props}
\begin{proof}[Proof of Proposition~\ref{propcont}]  The process $Y$ takes its values in the space $\mathbb M$ of marked genealogies that is equipped with the marked Gromov-weak topology, see Sec \ref{SecGen}. According to~\cite{DGP11}, this topology is metrized by the so-called marked Gromov-Prohorov metric, and~\cite{DGP11} Definition  3.1 ensures that the Gromov-Prohorov distance of two elements $\chi, \chi' \in \mathbb M$ is bounded from above by the Prohorov distance of $m$ and $m'$, where the marked ultrametic measure spaces $(\tau,d,m)$ and $(\tau,d,m')$ are  representatives of the isomorphy classes $\chi$ and $\chi'$ in a common embedding. In our situation the common embedding of the representatives of $Y_{t_1}$ and $Y_{t_2}$ happens in the selective lookdown space   $(\hat Z,\rho)$, and the two measures in the embedding are the sampling measures $m_{s(t_1)}$ and $m_{s(t_2)}$, with  $m_s$ defined in \eqref{selm}. Since for each $M\in \mathbb N$ the time change $t \mapsto s(t)$ given by \eqref{tch} is bi-continuous up to the stopping time $\tau_M$, it suffices to show that a.s. the map $s\mapsto m_s$ is continuous in the weak topology on $(\hat Z,\rho)$. This latter continuity, however, is   a consequence of Lemmas~\ref{lem:cont-frag-masses} and~\ref{colouringlem} and the fact that the fragments $\Gamma_\gamma$ are monotypic.
\end{proof}
\begin{proof}[Proof of Proposition~\ref{prop:projection}.] We proceed in two steps. In the whole proof, let $M>0$ be fixed and let us consider all the processes stopped at $\tau_M$. For the sake of notation, we omit here the stopping times $\tau_M$.\\
First, we explain how to obtain the martingale problems for $\xi^A$ and $\xi^B$ and second, we compute the brackets of the corresponding martingales. \\

\noindent \textbf{Step 1:} For any $f:\mathbb R_+\times\mathbb I\to\mathbb R$ of class $\Co^\infty$ that is supported in $(1/M,M)$ with respect to its first component and bounded, we can associate a function $F=f\circ \gamma_1$ of degree 1 on $\R_+\times \R^{\N^2} \times \I^\N$. Such a function $F$ belongs to $D_M$ with $F(v,r,g)=f(v,g(1))$ for all $(v,r,g)\in\mathbb R_+\times\mathbb R^{\mathbb N^2}\times\mathbb I^{\mathbb N}$, and our purpose is to rewrite the martingale problem \eqref{martprobY} for such test function $F$. \\

For the first term in the left hand side of \eqref{martprobY}, we have that:
\begin{align}\label{etape7}
\Phi_F(\xi_t,Y_t)=\int f\big(\xi_t, g(1)\big)  \nu^{Y_t}(dr,dg)= f\big(\xi_t, A\big) \mu_t\{A\}+ f\big(\xi_t, B\big) (1-\mu_t\{A\}),
\end{align}
since under the marked distance matrix distribution $\nu^{Y_t}$, the type configuration corresponds to an i.i.d. sequence drawn from $\mu_t\{A\} \delta_A (dh)+ (1-\mu_t\{A\})\delta_B(dh)$.\\

Now, let us compute the second term of the left hand side of \eqref{martprobY}. For our choice of function $F$, we have from \eqref{genX} that:
\begin{multline*}
\mathbb{A}\Phi_F(v,Y_t)\\
\begin{aligned}= & \int \nu^{Y_t}(dr,dg)\Big\{\frac{v}{2}f_{vv}(v,g(1))
+\big(bv\mu^g\{A\}-2cv^2\mu^g\{A\}\mu^g\{B\}\big)f_v(v,g(1))\notag\\
&+cv
\int {\mathrm m}^{r,g}(d\theta,dh) \Big[\ind_{\{g(1)=B \} }\mu^g\{A\}
(f(v,h)-f(v,B)) + \ind_{\{g(1)=A \} }\mu^g\{B\}
(f(v,h)-f(v,A))\Big]\notag\\
&+b\int {\mathrm m}^{r,g}(d\theta,dh)  \ind_{\{h=A\}} (f(v,A)-f(v,g(1)))\Big\}\\
= & \int \nu^{Y_t}(dr,dg)\Big\{\frac{v}{2}f_{vv}(v,g(1))
+\big(bv\mu^g\{A\}-2cv^2\mu^g\{A\}\mu^g\{B\}\big)f_v(v,g(1))\notag\\
&+cv
\Big(\ind_{\{g(1)=B \} }(\mu^g\{A\})^2
 - \ind_{\{g(1)=A \} }(\mu^g\{B\})^2
\Big)(f(v,A)-f(v,B))\\
+ & b\mu^g\{A\}  (f(v,A)-f(v,g(1)))\Big\},
\end{aligned}
\end{multline*}by recalling that the projection of ${\mathrm m}^{r,g}(d\theta,dh)$ on its second component gives the type frequencies $\mu^g\{A\}$ and $\mu^g\{B\}$. Under $\nu^{Y_t}(dr,dg)$, $\mu^g\{A\}$ (resp. $\mu^g\{B\}=1-\mu^g\{A\}$) is constant and equal to $\mu_t\{A\}$ (resp. $\mu_t\{B\}$) and $g(1)$ is a random variable that takes the values $A$ and $B$ with probabilities $\mu_t\{A\}$ and $1-\mu_t\{A\}$. We then deduce that:
\begin{align}
\mathbb{A}\Phi_F(v,Y_t)= & \mu_t\{A\} \frac{v}{2}f_{vv}(v,A)+(1-\mu_t\{A\}) \frac{v}{2}f_{vv}(v,B) \nonumber\\
+ & \big(bv\mu_t\{A\}-2cv^2 \mu_t\{A\} (1-\mu_t\{A\})\big)\big( \mu_t\{A\} f_v(v,A)+(1-\mu_t\{A\})f_v(v,B) \big)\nonumber\\
+ &cv \mu_t\{A\} (1-\mu_t\{A\}) (2\mu_t\{A\}-1) \big(f(v,A)-f(v,B)\big)\nonumber\\
+ & b\mu_t\{A\}  (1-\mu_t\{A\}) (f(v,A)-f(v,B)).\label{etape6}
\end{align}Choosing $f$ in \eqref{etape7} and \eqref{etape6} such that $f(v,h)=v \ind_{\{h=A\}}$ and replacing $v$ by $\xi_t$, we find that:
\begin{align}
M^A_t:=  \xi^A_t -\xi^A_0  - \int_0^{t} \Big(b \xi^A_u - c \xi^A_u \xi^B_u \Big) du, \qquad t\geq 0,
\label{etape9}
\end{align}is a local martingale (when stopped at $\tau_M$, $M^A_{.\wedge \tau_M}$ is a square integrable martingale) started at 0. This local martingale is also continuous by Theorem~\ref{Gprop}. We can proceed similarly to find that $M^B_t:=\xi^B_t -\xi^B_0 + \int_0^t c \xi^A_u \xi^B_u du$ is also a continuous local martingale started at 0.\\

\noindent \textbf{Step 2:} Let us now compute the brackets $\langle M^A\rangle_.$, $\langle M^B\rangle_.$ and $\langle M^A,M^B\rangle_.$. Proceed similarly as in Step 1 for functions $f:\mathbb R_+\times\mathbb I^2\to\mathbb R$ of class $\Co^\infty$ and supported in $(1/M,M)$ with respect to its first component, and to which we associate $F=f\circ \gamma_2$ of degree 2 on $\R_+\times \R^{\N^2} \times \I^\N$. Such a function $F$ belongs to $D_M$ with $F(v,r,g)=f(v,g(1),g(2))$ for all $(v,r,g)\in\mathbb R_+\times\mathbb R^{\mathbb N^2}\times\mathbb I^{\mathbb N}$.\\

For the choice of $f(v,h_1,h_2)=v^2 \ind_{\{h_1=A\}}\ind_{\{h_2=A\}}$, we obtain that:
\begin{align}
(\xi^A_t)^2 -\int_0^t \Big(\xi^A_u+2b(\xi^A_u)^2-2c (\xi^A_u)^2 \xi^B_u\Big) du,\qquad t\geq 0,
\end{align}
is a continuous local martingale. Using Itô's formula on \eqref{etape9}, we also have that:
\begin{align}
(\xi^A_t)^2  - \int_0^t \Big(2b(\xi^A_u)^2-2c (\xi^A_u)^2 \xi^B_u\Big) du - \langle M^A\rangle_t, \qquad  t\geq 0,
\end{align}
is a continuous local martingale. From the comparison of these two expressions, we deduce that:
\begin{equation}
\langle M^A\rangle_t = \int_0^t \xi^A_u du, \qquad  t\geq 0.
\end{equation}
In a similar way, the choices of $f(v,h_1,h_2)= v^2 \ind_{\{h_1=B\}}\ind_{\{h_2=B\}}$ and $f(v,h_1,h_2)= v^2 \ind_{\{h_1=A\}}\ind_{\{h_2=B\}}$ allow us to compute $\langle M^B\rangle_.$ and $\langle M^A,M^B\rangle_.$. Using Levy's representation theorem \cite[Th. IV.3.6, p.141]{revuzyor}, we deduce that there exists on an enlarged probability space two independent Brownian motions $W^A$ and $W^B$ such that $dM^A_t=\sqrt{ \xi^A_t}dW^A_t$ and $dM^B_t=\sqrt{\xi^B_t}dW^B_t$.
\end{proof}
\begin{proof}[Proof of Proposition~\ref{strongsol_type_process}.]
Let us consider the following stopping times, for any $\varepsilon > 0$:
\begin{equation}\label{eq:tauA-tauB}
\tau^A_\varepsilon=\inf\{t \geq 0,\ \xi^A_t\leq \varepsilon\},\qquad \mbox{ and }\qquad \tau^B_\varepsilon=\inf\{t \geq 0,\ \xi^B_t\leq \varepsilon\},
\end{equation}with the usual convention that $\inf \emptyset =+\infty$. Before $\tau^A_\varepsilon\wedge \tau^B_\varepsilon$, the diffusion coefficients are Lipschitz continuous (with a Lipschitz constant of order $1/\sqrt{\varepsilon}$). Classical results (e.g. \cite[Ch. IV]{ikedawatanabe}) ensure strong uniqueness of the stopped processes $(\xi^A_{.\wedge \tau^A_\varepsilon \wedge \tau^B_\varepsilon},\xi^B_{.\wedge \tau^A_\varepsilon \wedge \tau^B_\varepsilon})$ for all $\varepsilon>0$. Let $\tau_0 = \inf\{ t \geq 0, \xi_t^A=0$ or $\xi_t^B=0 \}$. By the continuity of the processes, $\lim_{\epsilon \downarrow 0} \tau_\epsilon^A \wedge \tau_\epsilon^B = \tau_0$. Once one of the processes $\xi^A$ or $\xi^B$ has touched zero, it remains trapped there and the other process coincides with a standard (possibly drifted) Feller diffusion $\bar{\xi}^A$ or $\bar{\xi}^B$:
\begin{align}
\label{twotype-independent}
& d\bar{\xi}_t^A= b \bar{\xi}_t^A dt + \sqrt{\bar{\xi}_t^A} dW_t^{A} \quad \mbox{ and }\quad \bar{\xi}_t^B=0\\
\mbox{ or }&
\bar{\xi}_t^A=0\quad \mbox{ and }\quad d\bar{\xi}_t^B= \sqrt{\bar{\xi}_t^B} dW_t^{B}.\nonumber
\end{align}
The latter diffusions are well studied (see e.g. \cite[Ch. IV.8]{ikedawatanabe}) and we have strong existence and uniqueness for \eqref{twotype-independent}, and consequently also for \eqref{twotype}.

In order to prove the asserted long time behavior of $(\xi^A, \xi^B)$ we observe that the two (independent) Feller diffusions $\bar{\xi}^A$ and $\bar{\xi}^B$ appearing in  \eqref{twotype-independent} also provide dominating processes for $(\xi^A,\xi^B)$ (see e.g. \cite[Th.  VI.1.1]{ikedawatanabe}). The process $\bar{\xi}^A$  remains nonnegative for all $t\ge 0$, and $0$ is a trap. In the following, we assume $c>0$. Denoting $\bar{\tau}^A_0=\inf\{t \geq 0,\ \bar{\xi}^A_t=0\}$, it is known that $\P(\bar{\tau}^A_0<+\infty)\in (0,1)$ and that on the set $\{\bar{\tau}^A_0=+\infty\}$, $\lim_{t\rightarrow +\infty} \bar{\xi}^A_t=+\infty$ a.s. (see \cite[Corollary~2, p.190]{bertoinlevyprocesses}).
The  process $\bar{\xi}^B$  gets extinct almost surely in finite time: $\P(\bar{\tau}^B_0<+\infty)=1$ with $\bar{\tau}^B_0=\inf\{t \geq 0,\ \bar{\xi}^B_t=0\}$. The process $(\bar{\xi}^A,\bar{\xi}^B)$ dominates stochastically $(\xi^A,\xi^B)$. As a consequence, $\xi^B$ gets extinct in finite time almost surely. Overall, either $\xi^A$ touches zero before $\xi^B$ and the whole process then goes to extinction, or $\xi^A_{\tau^B_0}>0$ and there is a positive probability that $\tau^A_0=+\infty$ and when this happens, $\lim_{t\rightarrow +\infty}\xi_t^A=+\infty$ a.s.
\end{proof}

%
%

\section{Outlook: An extension to multiple types and mutations}\label{sec:multitype}
The previous sections were restricted to a prototype example with two types and without mutation. Indeed, we believe that this example, which allowed for a trade-off between conciseness and elaboration, is best suited for displaying our novel {\em pathwise} approach to the joint evolution of population size, type configuration and genealogy.

In this concluding section we give a brief outlook to a more general situation, without going into further details.
Let now the type space $\mathbb  I$ be a compact group. Again we write  $\mu_t $ for the relative type frequencies and $\xi_t$ for the total mass of the population at time $t$, and we put $\Xi_t := \xi_t\mu_t$. The state space of $(\mu_t)$  is $ M^1(\mathbb I)$, the set of probability measures on $\mathbb I$, and that of $(\Xi_t)$ is    $M(\mathbb I)$, the set of finite measures on $\mathbb I$, equipped with the weak topology.

 Let $\underline b =  \underline b(h)$ and $\underline  c=\underline  c(h,h')$ be bounded, measurable mappings from $\mathbb I$ to $\mathbb R_+$ and  $\mathbb I \times \mathbb I$ to $\mathbb R_+$, respectively. For $\rho  \in M^1(\mathbb I)$ and $h\in \mathbb I$ we put $c(h,\rho):= \int_{\mathbb I} \underline c(h,h') \rho(dh')$. Finally, let $h\mapsto \ell(h,\cdot)$ be a measurable map from $\mathbb I$ to $M^1(\mathbb I)$.

We say that $\Xi$ is an interactive Dawson-Watanabe process with fecundity function $b$, competition kernel~$c$ and mutation kernel $\ell$ if for all $f \in \mathcal C(\mathbb I)$, the continuous functions on $\mathbb I$,
\[\int_{\mathbb I} f(h)\, \Xi_t(dh) - \int_0^t \int_{\mathbb I}\left(f(h)(b(h)-c(h,\mu_u)\xi_u)+\int_{\mathbb I}  (f(h')-f(h))\ell(h,dh')\right) \Xi_u(dh) \,du\]
is a continuous martingale with quadratic variation
\[\int_0^t  \int_{\mathbb I}f^2(h)\, \Xi_u(dh) \,du,\]
cf. \cite{MR1681126}, Example 4.6 for the non-interactive case. Putting $f\equiv 1$ we see that the total mass process $\xi_t := \Xi_t(\mathbb I)$, $t\ge 0$, is required to be a weak solution of the SDE
\begin{equation}\label{xigeneral}
d\xi_t  = \left(\xi_t\int_{\mathbb I}b(h)\mu_t(dh)\, - \xi_t^2 \int_{\mathbb I}c(h,\mu_t)\, \mu_t(dh)\right)\,dt+\sqrt{\xi_t}\, dW_t.
\end{equation}
Our prototype example fits into this framework with $\mathbb I=\{A,B\}$, $\ell \equiv 0$ and   $\underline b(A)  = b$, $\underline b(B) = 0$, $\underline c(A,A)=\underline c(B,B)=0$, $\underline c(A,B)=\underline c(B,A)=c$.

In order to arrive at an analogue of Theorem \ref{Gprop} in this more general framework one has to modify the update rule \eqref{fecdraw}. In addition to the symbols $\beta$ and $\delta$ that indicate ``birth" or ``death'' as the $4^{th}$ component  of the Poisson point measures $\mathcal K_i$, we now have a third symbol $\lambda$ that figures for ``mutation'': the $\mathcal K_i$ are now a family of independent Poisson  processes on $\mathbb R_+\times \mathbb R_+ \times [0,1]\times \{\beta,\delta,\lambda\}$, with $\mathcal K_i( \cdot \times \{\beta\})$, $\mathcal K_i( \cdot \times \{\delta\})$ and $\mathcal K_i( \cdot \times \{\lambda\})$ having Lebesgue intensity measure.

With the abbreviation $\mu^{g,b} (dh') :=  \frac {b(h')\mu^g(dh')}{\int b\, d\mu^g}$,
the update rule \eqref{fecdraw} is  modified to
\begin{align*}
q(h, g,v, z, w, \beta) &:= \begin{cases}\kappa\big(\mu^{g,b},w) &\phantom{AAA}\mbox{ if } z \le \int  b\, d\mu^g\, \,  v,  \\ h  &\phantom{AAA} \mbox{ otherwise,} \end{cases}
\\
q(h, g,v, z, w, \delta) &:= \begin{cases}\kappa(\mu^g, w) &\phantom{AAAA}\mbox{ if } z \le c(h,\mu^g)\,  v^2 ,  \\ h  &\phantom{AAAA}\mbox{ otherwise.} \end{cases}
\\
q(h, g,v, z, w, \lambda) &:= \begin{cases}\kappa(\ell(h,\cdot),w)&\phantom{AA} \mbox{ if }z\leq v,  \\ h  &\phantom{AA}\mbox{ otherwise.}\end{cases}
\end{align*}
Likewise, the total mass process $(\zeta_s)$ in the lookdown timescale, which is another ingredient of Theorem~\ref{Gprop}, will be a time change of  $(\xi_t)$ under  \eqref{tch}, turning  \eqref {xigeneral} into
\[d\zeta_s =  \left(\zeta_s\int_{\mathbb I}b(h)\mu^{G_s}(dh) -\zeta_s^2\int_{\mathbb I} c(h,\mu^{G_s})\mu^{G_s}(dh)\right)\, \zeta_s\, ds \,+ \,\zeta_s \, d\mathcal W_s.\]

\noindent \textbf{Acknowledgements:} We thank G\"otz Kersting for helping us with the proof of Lemma \ref{Goetz}. A.B. was supported as a postdoc by CONACyT in an earlier phase of this project. She gratefully acknowledges the kind hospitality of Goethe-University Frankfurt. S.G. has been supported in part at the Technion by a Zeff Fellowship, a Minerva fellowship of the Minerva Gesellschaft fuer die Forschung mbH (11/17-10/19), and by Israel Science Foundation (ISF) grant No. 1382/17, Binational Science Foundation (BSF) award 2018330. S.G. has also been supported by Deutsche Forschungsgemeinschaft (DFG, German Research Foundation) research grant contract number 2337/1-1, project 432176920. S.K.~has been funded by the DFG - Project number 393092071. V.C.T. has been funded by Labex CEMPI (ANR-11-LABX-0007), Labex B\'ezout (ANR-10-LABX-58), the Chaire "Mod\'elisation Math\'ematique et Biodiversit\'e" of Veolia Environnement-Ecole Polytechnique-Museum National d'Histoire Naturelle-Fondation X and the European Union (ERC-AdG SINGER-101054787).  A.W. received partial support through DFG project   \mbox{WA 967/4-2} in the SPP 1590. Also, A.B., S.K., V.C.T. and A.W. would like to thank the Institute for Mathematical Sciences for supporting their visit to the IMS, National University of Singapore in 2017, where progress on this project was made. We also thank two anonymous referees for a careful reading and valuable hints that helped to improve the presentation.

\bigskip

{\footnotesize
\bibliographystyle{abbrv}

\providecommand{\noopsort}[1]{}\providecommand{\noopsort}[1]{}\providecommand{\noopsort}[1]{}\providecommand{\noopsort}[1]{}

}

\end{document}